\newtheorem{thm}{Theorem}[section]
\newtheorem{lem}[thm]{Lemma}
\theoremstyle{definition}
\newtheorem{defn}[thm]{Definition}
\theoremstyle{remark}
\numberwithin{equation}{section}
\begin{document}
\title{Multiplicity and asymptotic behavior of normalized solutions to $p$-Kirchhoff equations}
\author{Jianwen Zhou\thanks{$\textrm{ E-mail:jwzhou@ynu.edu.cn (Jianwen Zhou)}$}, Puming Yang\thanks{$\textrm{ E-mail:yangpuming@stu.ynu.edu.cn (Puming Yang)}$}\\
\small School of Mathematics and Statistics,\\
\small Yunnan  University, Kunming, Yunnan, P.R. China.}%
\date{}%

\maketitle
\begin{abstract}
In this paper, we study a type of $p$-Kirchhoff equation 
$$
	-\left( a+b\int_{\mathbb{R} ^3}{\left| \nabla u \right|^pdx} \right) \varDelta _pu=\lambda \left| u \right|^{p-2}u+\left| u \right|^{q-2}u, x \in \mathbb{R}^3
$$
with the prescribed mass
$$
\left(\int_{\mathbb{R} ^3}{\left| u \right|^{p}dx}\right)^\frac{1}{p} = c > 0
$$
where $a>0, b > 0,\frac{3}{2} <p <3, p < q < p^{\ast}:=\frac{3p}{3-p} $,$\varDelta _pu=div\left( \left| \nabla u \right|^{p-2}\nabla u \right)$ is the $p$-Laplacian of $u$, $\lambda \in \mathbb{R}$ is Lagrange multiplier. We consider both $L^p$-subcritical , $L^p$-critical and $L^p$-supercritical cases. Precisely, in the $L^p$-subcritical and $L^p$-critical cases, we obtain the existence and nonexistence of the normalized solutions for the $p$-Kirchhoff equation. In the $L^p$-supercritical case, we obtain the existence of radial ground sates and multiplicity of radial normalized solutions for the $p$-Kirchhoff equation. Furthermore, we study the asymptotic behavior of normalized solutions when $b \rightarrow 0^+$.
Besides, when $\frac{3}{2} < p \leq 2$, benefit from the uniqueness(up to translations) of optimizer for Gargliardo-Nirenberg inequality, we show the existence and uniqueness of normalized solutions and provide the accurate descriptions. 
 \\
\par
\noindent\textbf{Keywords:} $p$-Kirchhoff equation, Normalized solution, Ground state, Multiplicity, Asymptotic behavior.\\
\par
\noindent\textbf{Mathematics Subject Classification:} 35J20,35J92, 35B40
\end{abstract}
\section{Introduction and main results}
In this paper, we study a type of $p$-Kirchhoff equation 
\begin{equation}\label{eq1.1}
	-\left( a+b\int_{\mathbb{R} ^3}{\left| \nabla u \right|^pdx} \right) \varDelta _pu=\lambda \left| u \right|^{p-2}u+\left| u \right|^{q-2}u, x \in \mathbb{R}^3
\end{equation}
with the prescribed mass
\begin{equation}\label{eq1.2}
\left(\int_{\mathbb{R} ^3}{\left| u \right|^{p}dx}\right)^\frac{1}{p} = c>0.
\end{equation}
where $a>0, b > 0,\frac{3}{2} <p <3, p < q < p^{\ast}:=\frac{3p}{3-p} $,$\varDelta _pu=div\left( \left| \nabla u \right|^{p-2}\nabla u \right)$ is the $p$-Laplacian of $u$, $\lambda \in \mathbb{R}$ is Lagrange multiplier.
\par
In rencent years, many researchers have focused on studying the p-Kirchhoff type problem, e.g.,\cite{correa2009p,furtado2019multiple,pucci2016existence} and their references therein. \\
\par
  When $p = 2$, it reduces to the following general Kirchhoff equation
\begin{equation}\label{eq1.3}
-\left( a+b\int_{\mathbb{R} ^3}{\left| \nabla u \right|^2dx} \right) \varDelta u=\lambda u+\left| u \right|^{q-2}u, x \in \mathbb{R}^3
\end{equation}
  which is first proposed by G. Kirchhoff in \cite{kirchhoff1883vorlesungen}. It shows the free vibration of elastic strings and describes the motions of moderately large amplitude. Kirchhoff type problem has been widely studied since a functional analysis approach is proposed in the significant work of J.L. Lions \cite{lions1978some}. \\
\par
  When $a=1, b=0$, equation(\ref{eq1.1}) reduces to the following $p$-Laplacian equation
\begin{equation}\label{eq1.4}
	-\varDelta _pu=\lambda \left| u \right|^{p-2}u+\left| u \right|^{q-2}u, x \in \mathbb{R}^3
\end{equation}
The $p$-Laplacian operator $\varDelta _p$ has a strong physical significance. It can be used as models for various physical problems. In fluid mechanics, the value $1<p<2$ (resp.,$p=2$ and $p>2$) corresponds to a pseudoplastic (resp.,Newtonian and diltant) fluid.\\
\par
Recently, many researchers have focused on normalized solutions. Ye \cite{ye2015sharp} studied the existence of constrained minimizers for the Kirchhoff equation (\ref{eq1.3}) and obtained the sharp existence of normalized solutions under different assumptions; Following \cite{ye2015sharp}, Zeng and Zhang \cite{zeng2017existence} obtained the existence and uniqueness of normalized solutions for equation (\ref{eq1.3}). For the $p$-Laplacian equations (\ref{eq1.4}), Lou and Zhang \cite{lou2024multiplicity} studied the multiplicity of normalized solutions under the $L^p$ mass constrant (\ref{eq1.2}). For the $p$-Kirchhoff equation (\ref{eq1.1}), Ren and Lan \cite{ren2024normalized} studied the existence of normalized solutions in the $L^2$ supercritical case.\\
\par
As far as we know, there are few works concerning with the $p$-Kirchhoff equation (\ref{eq1.1}) with the $L^p$ mass constraint (\ref{eq1.2}). Thus we will explore Eqs. (\ref{eq1.1})-(\ref{eq1.2}) in this paper.\\
\par
It is well known that the appearance of nonlocal term $\left(\int_{\mathbb{R} ^3}{\left| \nabla u \right|^pdx}\right)\varDelta_p u$ causes several mathematical difficulties and makes the problem different from the general $p$-Lapalcian case. 
By taking $\lambda$ as a Lagrange multiplier, we consider critical points of the following $C^1$-functional
\begin{equation}\label{eq1.5}
I(u) := \frac{a}{p}\int_{\mathbb{R} ^3}{\left| \nabla u \right|^pdx} + \frac{b}{2p}\left(\int_{\mathbb{R} ^3}{\left| \nabla u \right|^pdx} \right)^2 - \frac{1}{q}\int_{\mathbb{R} ^3}{\left| u \right|^pdx}.
\end{equation}
constrained on
$$
S(c) := \left\{u \in W^{1,p}(\mathbb{R}^3) : \left(\int_{\mathbb{R} ^3}{\left| u \right|^{p}dx}\right)^\frac{1}{p} = c>0 \right\},
$$
where
$$
W^{1,p}(\mathbb{R}^3) := \left\{ u \in L^p(\mathbb{R}^3) : \int_{\mathbb{R} ^3}{\left| \nabla u \right|^{p}dx} < \infty \right\}.
$$
We know the critical points of $I|_{S(c)}$ are weak solutions to (\ref{eq1.1})-(\ref{eq1.2}) and define
\begin{equation}\label{eq1.6}
i(c) := \underset{u \in S(c)}{\text{inf}}I(u).
\end{equation}
In the $L^p$ subcritical and critical cases, we give the existence of normalized solutions to the $p$-Kirchhoff Eqs. (\ref{eq1.1})-(\ref{eq1.2}):
\begin{thm}\label{thm1.1}
(1) Let $p<q<p+\frac{2p^2}{3}$, then
\par (i) when $p<q<p+\frac{p^2}{3}$, i(c) has a minimizer for any $c>0$.
\par (ii) when $q=p+\frac{p^2}{3}$, i(c) has a minimizer if and only if $c> a^{\frac{3}{p^2}}\left| Q \right|_p$, where $|\cdot|_s$ denote the norm of $L^s(\mathbb{R}^3)$ and $Q \in W^{1,p}(\mathbb{R}^3)$ is a ground-state solution to the following equation
$$
-\frac{3(q-p)}{p^2}\varDelta _pu + \left(1+\frac{(p-3)(q-p)}{p^2}\right)\left| u \right|^{p-2}u = \left| u \right|^{q-2}u, x \in \mathbb{R}^3.
$$
\par (iii) when $p+\frac{p^2}{3}<q<p+\frac{2p^2}{3}$, there exist $c^{\ast} > 0$  such that i(c) has a minimizer if and only if $c \geq c^{\ast}$, where
$$
c^{\ast} = \left[ p\left| Q \right|_{p}^{q-p}\left(\frac{ap}{2p^2-3q+3p)}\right)^{\frac{2p^2-3q+3p}{p^2}}\left(\frac{bp}{6pq-8p^2}\right)^{\frac{3q-3p-p^2}{p^2}} \right] ^{\frac{p}{pq-3q+3p}}.
$$
\par Moreover, for any $p<q<p+\frac{2p^2}{3}$ and $c > 0$, if $i(c)$ has a minimzer, then Eqs.(\ref{eq1.1})-(\ref{eq1.2}) has a couple of solutions $(u,\lambda) \in W^{1,p}(\mathbb{R}^3) \times \mathbb{R}$ such that $u \in S(c)$ and $\lambda < 0$.
\par (2)When $q=p+\frac{2p^2}{3}$, i(c) has no minimizer for all $c>0$.
\end{thm}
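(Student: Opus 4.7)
Set $\sigma := 3(q-p)/p$; then the sharp Gagliardo--Nirenberg inequality on $\mathbb{R}^3$ reads
$$|u|_q^q \leq C_*\,|\nabla u|_p^\sigma\,|u|_p^{q-\sigma},$$
with equality (up to translation and dilation) at the ground state $Q$ of the statement. The mass-preserving dilation $u_\tau(x) := \tau^{3/p}u(\tau x)$ satisfies
$$I(u_\tau) = \frac{a}{p}\tau^p|\nabla u|_p^p + \frac{b}{2p}\tau^{2p}|\nabla u|_p^{2p} - \frac{\tau^\sigma}{q}|u|_q^q,$$
so every regime reduces to analyzing the scalar auxiliary $h(t) := \frac{a}{p}t^p + \frac{b}{2p}t^{2p} - \frac{C_*}{q}c^{q-\sigma}t^\sigma$ evaluated at $t=|\nabla u|_p$. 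The three sub-cases of part (1) correspond to $\sigma<p$, $\sigma=p$, $p<\sigma<2p$, while case (2) is $\sigma=2p$.

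\textbf{Case (1)(i): $\sigma<p$.} The sharp GN bound gives $I(u)\geq h(|\nabla u|_p)$, which is coercive since $\sigma<p<2p$, so $i(c)>-\infty$; moreover $h(t)<0$ for all small $t>0$, and choosing $\tau\to 0^+$ in $u_\tau$ yields $i(c)<0$ for every $c>0$. For a minimizing sequence $(u_n)\subset S(c)$, I would first pass to Schwarz symmetrizations---each of the three terms of $I$ is monotone under rearrangement---to assume $u_n$ is nonnegative and radially decreasing, and then invoke the Strauss compact embedding $W^{1,p}_{\mathrm{rad}}(\mathbb{R}^3)\hookrightarrow L^q(\mathbb{R}^3)$ to get $u_n\to u$ strongly in $L^q$ with $u_n\rightharpoonup u$ weakly in $W^{1,p}$. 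The crucial step is ruling out mass loss in the weak $L^p$-limit: I would establish the strict subadditivity $i(c)<i(c')+i((c^p-c'^p)^{1/p})$ for every $0<c'<c$ via a scaling argument exploiting $\sigma<p$, and combine it with a Brezis--Lieb-type decomposition whose nonlocal cross term $\tfrac{b}{p}|\nabla u|_p^p|\nabla(u_n-u)|_p^p$ is nonnegative; this forces $|u|_p=c$, whence $u$ is a minimizer.

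\textbf{Cases (1)(ii), (1)(iii), and sign of $\lambda$.} In (ii), $\sigma=p$, and sharp GN yields $I(u)\geq\bigl[\tfrac{a}{p}-\tfrac{C_*}{q}c^{q-p}\bigr]|\nabla u|_p^p+\tfrac{b}{2p}|\nabla u|_p^{2p}$; identifying $C_*$ with $q/(p|Q|_p^{q-p})$, the bracket is nonnegative exactly when $c\leq a^{3/p^2}|Q|_p$, forcing $I\geq 0$ on $S(c)$ with $i(c)=0$ unattained (let $\tau\to 0^+$ in $u_\tau$). For $c>a^{3/p^2}|Q|_p$ the bracket is negative, a suitably dilated $Q$ exhibits $I<0$, so $i(c)<0$ and the compactness scheme of (i) applies. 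In (iii), $p<\sigma<2p$ makes $h$ attain a unique positive minimum; imposing $h(t_0)=h'(t_0)=0$ gives $t_0^p=\tfrac{2a(\sigma-p)}{b(2p-\sigma)}$ and, upon elimination, the explicit threshold $c^*$ of the statement. For $c=c^*$ the minimizer is the scaled copy of $Q$ realizing both $|u|_p=c^*$ and $|\nabla u|_p=t_0$ and satisfying $I(u)=h(t_0)=0$; for $c>c^*$ strict negativity $i(c)<0$ and the same compactness argument apply. Finally, for any minimizer $u$, the Lagrange identity $a|\nabla u|_p^p+b|\nabla u|_p^{2p}=\lambda c^p+|u|_q^q$ and the Pohozaev identity $\tfrac{3-p}{p}(a|\nabla u|_p^p+b|\nabla u|_p^{2p})=\tfrac{3\lambda}{p}c^p+\tfrac{3}{q}|u|_q^q$ combine to give $-\lambda c^p=\tfrac{3p-(3-p)q}{pq}|u|_q^q$; since $q<p^*$ forces $3p-(3-p)q>0$, and $|u|_q^q>0$ because $i(c)<0$, we conclude $\lambda<0$.

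\textbf{Case (2) and main obstacle.} When $\sigma=2p$, $h(t)=\tfrac{a}{p}t^p+\bigl(\tfrac{b}{2p}-\tfrac{C_*}{q}c^{q-2p}\bigr)t^{2p}$. For $c$ small enough that the $t^{2p}$-coefficient is nonnegative, $h\geq 0$ with $h(t)\to 0$ as $t\to 0$, so $i(c)=0$ is not attained. For $c$ large enough that the coefficient is negative, testing on a near-optimizer of GN and sending $\tau\to+\infty$ in $u_\tau$ drives $I(u_\tau)\to-\infty$, so $i(c)=-\infty$ and no minimizer exists; at the borderline value, $h(t)=\tfrac{a}{p}t^p>0$ on $(0,\infty)$, again forcing $i(c)=0$ unattained. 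Hence no minimizer exists for any $c>0$. The principal obstacle across all cases is recovering strong $L^p$-convergence of a minimizing sequence in the presence of the nonlocal Kirchhoff term $b|\nabla u|_p^{2p}$, whose quadratic structure injects the cross term above into any Brezis--Lieb splitting; the saving is its positivity, which aligns with, rather than against, the strict subadditivity of $c\mapsto i(c)$ that ultimately rules out dichotomy.
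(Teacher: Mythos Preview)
Your overall strategy is sound and genuinely different from the paper's: you pass to Schwarz symmetrizations and exploit the Strauss compact embedding $W^{1,p}_{\mathrm{rad}}\hookrightarrow L^q$, whereas the paper stays in the full space, uses Ekeland's principle to upgrade the minimizing sequence to a Palais--Smale sequence, recovers compactness via Lions' vanishing lemma after translation, and then uses the PDE structure to obtain $\nabla u_n\to\nabla u$ a.e.\ (via the monotonicity inequality for the $p$-Laplacian, their Lemma~2.3). Your explicit construction at $c=c^*$---testing with the scaled optimizer $Q$ so that $I=h(t_0)=0=i(c^*)$---is also cleaner than the paper's approximation $c_n=c^*+1/n$.

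There is, however, a real gap in your compactness step. You invoke a Brezis--Lieb splitting of $|\nabla u_n|_p^p$ (and hence of $|\nabla u_n|_p^{2p}$, producing the cross term you mention) in order to write $I(u_n)\gtrsim I(u)+I(u_n-u)$ and then apply strict subadditivity. But Brezis--Lieb for the gradient requires $\nabla u_n\to\nabla u$ a.e., and this does \emph{not} follow from weak $W^{1,p}$-convergence, even after radial rearrangement. This is precisely the quasilinear obstacle the paper works hard to overcome via the Palais--Smale structure: testing the approximate equation against $(u_n-u)\eta$ with a cutoff $\eta$ and using the inequality $\langle |x|^{p-2}x-|y|^{p-2}y,\,x-y\rangle\ge c_p|x-y|^p$ (or its $p<2$ analogue) forces local strong convergence of gradients.

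Your radial route can be rescued, but not by Brezis--Lieb: once Strauss gives $u_n\to u$ strongly in $L^q$, weak lower semicontinuity of the gradient terms yields $I(u)\le\liminf I(u_n)=i(c)$ directly. If $\alpha:=|u|_p<c$, then $i(\alpha)\le I(u)\le i(c)$. Now the same spatial dilation $u\mapsto u(\theta^{-p/3}\,\cdot)$ that underlies subadditivity actually gives the strict \emph{monotonicity} $i(\theta\alpha)<\theta^p\,i(\alpha)<i(\alpha)$ whenever $\theta>1$ and $i(\alpha)<0$; taking $\theta=c/\alpha$ yields $i(c)<i(\alpha)$, a contradiction. (If instead $i(\alpha)\ge 0$, then $0\le I(u)\le i(c)<0$ is already absurd.) This bypasses gradient Brezis--Lieb entirely and is the natural way to close your argument.
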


In the following three conclusions, we study the Eqs.(\ref{eq1.1})-(\ref{eq1.2}) in the $L^p$ mass supercritical case. In this case, the energy functional $I$ constrained on $S(c)$ is not bounded from below which leads to many difficulties. In order to overcome the lack of compactness, we work on the radially symmetric functionals space $W_{r}^{1,p}(\mathbb{R}^3)$ and consider the following minimization problem:
\begin{equation}\label{eq1.7}
m(c) := \underset{u \in M(c)}{\text{inf}} I(u),
\end{equation}
where
$$
M(c) := \{u \in S_{r}(c):P(u)=0 \},
$$
$$
S_r(c) := \{|u|_{p} = c : u\in W_{r}^{1,p}(\mathbb{R}^3)\},
$$
and
$$
P(u) := a\left| \nabla u \right|_{p}^{p} + b\left| \nabla u \right|_{p}^{2p} - \frac{3(q-p)}{pq}\left| u \right|_{q}^{q} = 0
$$
is associated Pohozaev identity of Eqs(\ref{eq1.1})-(\ref{eq1.2}).

\begin{defn}\label{defn1.2}
 We call $(u,\lambda)$ is a ground-state solution to (\ref{eq1.1})-(\ref{eq1.2}), if $(u,\lambda)$ is a solution to (\ref{eq1.1})-(\ref{eq1.2}) and $u$ minimizes the energy functional $I$ among all solutions of $L^{p}$-norm.
\end{defn}

\begin{thm}\label{thm1.3} Let $p+\frac{2p^2}{3} < q < p^{\ast}$, for any $c > 0$, (\ref{eq1.1})-(\ref{eq1.2}) has a couple of solution $(u, \lambda) \in  W_{r}^{1,p}(\mathbb{R}^3) \times \mathbb{R}$ with $I(u) = m(c)$ and $\lambda < 0$. Moreover, $u$ is a radial ground state of (\ref{eq1.1})-(\ref{eq1.2}).
\end{thm}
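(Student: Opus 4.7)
The functional $I$ is unbounded below on $S_r(c)$, so minimization cannot be carried out directly on the sphere; I will instead use the Pohozaev manifold $M(c)$ as a natural constraint and realize $m(c)$ as a mountain-pass-type level. The key device is the $L^p$-preserving dilation $u_t(x):=t^{3/p}u(tx)$, which keeps membership in $S_r(c)$ and transforms the energy into
$$
\psi_u(t):=I(u_t)=\frac{at^p}{p}|\nabla u|_p^p+\frac{bt^{2p}}{2p}|\nabla u|_p^{2p}-\frac{t^{3(q-p)/p}}{q}|u|_q^q.
$$
Because $q>p+\frac{2p^2}{3}$ is equivalent to $\frac{3(q-p)}{p}>2p$, the map $\psi_u$ is positive near $0$, tends to $-\infty$ as $t\to\infty$, and attains its unique strict maximum at some $t^{\ast}(u)>0$; the identity $t\psi_u'(t)=P(u_t)$ then gives $u_{t^{\ast}(u)}\in M(c)$, so $M(c)\neq\emptyset$. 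Applying the Gagliardo-Nirenberg inequality to the relation $P(u)=0$ yields a positive lower bound on $|\nabla u|_p$ over $M(c)$, hence $m(c)>0$.

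To construct a Palais-Smale sequence for $I|_{S_r(c)}$ at level $m(c)$ equipped with Pohozaev information, I would apply the Jeanjean-type minimax to the warped functional $(s,u)\mapsto I(u_{e^s})$ on $\mathbb{R}\times S_r(c)$ together with Ekeland's principle on $M(c)$ in the spirit of \cite{ren2024normalized}. This produces $(u_n)\subset S_r(c)$ satisfying $I(u_n)\to m(c)$, $P(u_n)\to 0$, and $(I|_{S_r(c)})'(u_n)\to 0$. Substituting $P(u_n)=o(1)$ into $I(u_n)$ rewrites the energy as
$$
I(u_n)=a\left(\frac{1}{p}-\frac{p}{3(q-p)}\right)|\nabla u_n|_p^p+b\left(\frac{1}{2p}-\frac{p}{3(q-p)}\right)|\nabla u_n|_p^{2p}+o(1),
$$
and both coefficients are strictly positive precisely under $3(q-p)>2p^2$, giving boundedness of $(u_n)$ in $W_r^{1,p}(\mathbb{R}^3)$. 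The radial compact embedding $W_r^{1,p}(\mathbb{R}^3)\hookrightarrow L^q(\mathbb{R}^3)$ for $p<q<p^{\ast}$ then supplies $u_n\to u$ strongly in $L^q$ along a subsequence, while $u_n\rightharpoonup u$ weakly in $W^{1,p}$. Testing the approximate equation against $u_n$ bounds the Lagrange multipliers $\lambda_n$, so $\lambda_n\to\lambda$ after a further extraction.

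The main obstacle is preserving the mass constraint in the limit and upgrading weak to strong convergence in $W^{1,p}$, because $W_r^{1,p}(\mathbb{R}^3)$ does not embed compactly into $L^p(\mathbb{R}^3)$ and a priori only $|u|_p\leq c$ is known. Non-triviality of $u$ follows because $P(u_n)\to 0$ together with the lower bound on $|\nabla u_n|_p$ forces $|u_n|_q$ bounded below, and this is inherited by $u$ via the strong $L^q$ convergence. To rule out a strict drop $|u|_p<c$, I would combine the weak lower semicontinuity of the gradient part with strong $L^q$ convergence to obtain $P(u)\leq 0$, then invoke the fiber map above together with the strict monotonicity of $c\mapsto m(c)$ (proved by a scaling argument) to reach a contradiction. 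Once the constraint is preserved, the $(S_+)$-type monotonicity of $-\Delta_p$ applied to the approximate equation and a Br\'ezis-Lieb decomposition deliver strong $W^{1,p}$ convergence, so $u\in M(c)$ and $I(u)=m(c)$. Testing the limiting equation against $u$ and eliminating $|u|_q^q$ via $P(u)=0$ gives
$$
\lambda c^p=\frac{q(3-p)-3p}{pq}\,|u|_q^q<0
$$
since $q<p^{\ast}$. Finally, any other critical point of $I|_{S(c)}$ satisfies the Pohozaev identity and thus lies in $M(c)$, so its energy is at least $m(c)=I(u)$, which gives the radial ground-state property.
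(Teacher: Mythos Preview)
Your proposal is correct, and the core mechanism---fiber map analysis, the positive lower bound on $|\nabla u|_p$ over $M(c)$ giving $m(c)>0$, radial $L^q$ compactness, and the combination of $P(u)\le 0$ with the strict monotonicity of $c\mapsto m(c)$ to exclude mass loss---is exactly what the paper uses. The one methodological difference is that you build a Palais--Smale sequence for $I|_{S_r(c)}$ at level $m(c)$ via Jeanjean's warped minimax and then pass to the limit in the approximate Euler--Lagrange equation, whereas the paper bypasses this entirely: it simply takes a minimizing sequence $(u_n)\subset M(c)$ (so $P(u_n)=0$ exactly, no PS information needed), shows the weak limit $u$ satisfies $P(u)\le 0$ and lies in $M(c)$ with $I(u)=m(c)$ by the same fiber/monotonicity argument you describe, and only \emph{afterwards} invokes a separate ``natural constraint'' lemma (their Lemma~4.4, showing that any minimizer of $I|_{M(c)}$ is automatically a critical point of $I|_{S_r(c)}$ because the second Lagrange multiplier $\mu$ attached to $P'$ must vanish). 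Your route gives the critical-point property for free from the PS sequence but needs the $(S_+)$-type argument for strong convergence; the paper's route avoids constructing any PS sequence and gets strong convergence of the gradients directly from the chain of equalities $m(\alpha)\le I(u_{t_0})\le\cdots\le m(c)\le m(\alpha)$, at the cost of proving Lemma~4.4. Both are standard and both work; the paper's is slightly shorter.
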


Next, we show the multiplicity of normalized solutions to (\ref{eq1.1})-(\ref{eq1.2}):
\begin{thm}\label{thm1.4} Let $p+\frac{2p^2}{3} < q < p^{\ast}$ and $c > 0$. Then there exists a sequence $\{(u_n,\lambda_n)\}$ of solutions of (\ref{eq1.1})-(\ref{eq1.2}) such that $u_n$ is radial and $\lambda_n < 0$ for any $n \in \mathbb{N}^{+}$. Moreover, we have $\Vert u_n \Vert_{ W_{r}^{1,p}(\mathbb{R}^3) } \rightarrow +\infty$ and $I(u_n) \rightarrow +\infty$ as $n \rightarrow +\infty$.
\end{thm}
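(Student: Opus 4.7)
The plan is to construct the sequence $\{(u_n,\lambda_n)\}$ as critical points of $I|_{S_r(c)}$ via a symmetric genus-based minimax scheme on the Pohozaev manifold $M(c)$, adapting the strategy used for the pure $p$-Laplacian in \cite{lou2024multiplicity} to the nonlocal $p$-Kirchhoff setting. For $u\in S_r(c)$ and $t>0$, introduce the $L^p$-preserving fiber map $(t\star u)(x):=t^{3/p}u(tx)$, under which
$$
g_u(t):=I(t\star u)=\frac{a}{p}t^p|\nabla u|_p^p+\frac{b}{2p}t^{2p}|\nabla u|_p^{2p}-\frac{1}{q}t^{\alpha}|u|_q^q,\qquad \alpha:=\tfrac{3(q-p)}{p}>2p.
$$
One checks $g_u$ admits a unique strict maximizer $t(u)>0$ with $g_u'(t)=t^{-1}P(t\star u)$, so $\sigma(u):=t(u)\star u\in M(c)$ and $\sigma:S_r(c)\to M(c)$ is an odd homeomorphism. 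Then $\tilde I:=I\circ\sigma$ is an even $C^1$ functional on $S_r(c)$ whose critical points correspond bijectively with those of $I|_{M(c)}$.

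For each $k\in\mathbb{N}^+$ set $\Gamma_k:=\{A\subset S_r(c):A\text{ compact, symmetric, }\gamma(A)\geq k\}$, where $\gamma$ is the Krasnoselskii genus, and $c_k:=\inf_{A\in\Gamma_k}\sup_{u\in A}\tilde I(u)$. Non-triviality of $\Gamma_k$ is obtained by mapping the unit sphere of a $k$-dimensional subspace $V_k\subset C^\infty_{c,\mathrm{rad}}(\mathbb{R}^3)$ into $S_r(c)$ through $v\mapsto cv/|v|_p$. Inserting $P(v)=0$ into $I(v)$ yields, for $v\in M(c)$,
$$
I(v)=\Bigl(\tfrac{1}{p}-\tfrac{p}{3(q-p)}\Bigr)a|\nabla v|_p^p+\Bigl(\tfrac{1}{2p}-\tfrac{p}{3(q-p)}\Bigr)b|\nabla v|_p^{2p},
$$
with both bracketed coefficients positive by $q>p+2p^2/3$; hence $I$ is coercive on $M(c)$, $m(c)>0$, and $0<m(c)\leq c_1\leq c_2\leq\cdots$.

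Each $c_k$ is realized as a critical value through a Ghoussoub-type minimax principle carrying auxiliary Pohozaev information: one extracts a Palais--Smale sequence $\{u_n\}\subset S_r(c)$ with $\tilde I(u_n)\to c_k$ and $P(\sigma(u_n))\to 0$. Setting $v_n:=\sigma(u_n)$, the coercivity identity bounds $\{v_n\}$ in $W_r^{1,p}(\mathbb{R}^3)$; the compact embedding $W_r^{1,p}\hookrightarrow L^q$ for $p<q<p^*$ produces strong $L^q$ convergence to some $v^{(k)}$, and the monotonicity of $-\varDelta_p$ together with the limit of the nonlocal coefficient $a+b|\nabla v_n|_p^p$ upgrades this to strong $W_r^{1,p}$ convergence, giving a critical point at level $c_k$. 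Testing the equation against $v_n$ recovers the Lagrange multiplier $\lambda_n$, and combined with $P(v_n)=0$ and $q>p$ it forces $\lambda_n<0$.

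To complete the argument I must verify $c_k\to+\infty$. This I would obtain by first noting that $\tilde I$ is unbounded above on $S_r(c)$: the Gagliardo--Nirenberg inequality together with the closed-form maximum of $g_u$ shows that $\tilde I(u)$ grows like a negative power of the scale-invariant ratio $|u|_q^q/|\nabla u|_p^{\alpha}$, which can be made arbitrarily small by taking $u$ far from the GN extremal. A Bartsch-type symmetric fountain scheme, linking an increasing chain of finite-dimensional subspaces of $W_r^{1,p}$ with the family $\Gamma_k$ and using the PS compactness above, then yields $c_k\to+\infty$. From $I(u_n)=c_{k_n}\to\infty$ the coercivity identity delivers $|\nabla u_n|_p\to\infty$, hence $\|u_n\|_{W_r^{1,p}}\to+\infty$. \textbf{The main obstacle} is the compactness analysis: one needs Palais--Smale sequences carrying the additional information $P(\sigma(u_n))\to 0$ (to bootstrap boundedness from the coercivity identity), and, above all, one must pass to the limit in the quasilinear nonlocal term $(a+b|\nabla v_n|_p^p)\varDelta_p v_n$, in which the Kirchhoff coefficient couples the Dirichlet norm to the $p$-Laplacian and blocks a direct Brezis--Lieb or monotonicity closing; the remedy is a simultaneous use of the Pohozaev identity, the strong $L^q$ limit from radiality, and the $-\varDelta_p$ monotonicity inequality to force $|\nabla v_n|_p\to|\nabla v^{(k)}|_p$.
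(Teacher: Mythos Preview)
Your approach is a genuine alternative to the paper's and is essentially correct. The paper does \emph{not} work on the Pohozaev manifold for the multiplicity result: instead it builds an explicit symmetric linking scheme directly on $S_r(c)$ in the style of Bartsch--De~Valeriola, using a Schauder-basis decomposition $W_r^{1,p}=V_n\oplus V_n^{\bot}$, the classes $\Gamma_n$ of odd continuous maps from $[0,1]\times(S_r(c)\cap V_n)$ with prescribed endpoints, and Jeanjean's auxiliary functional on $S_r(c)\times\mathbb{R}$ to manufacture Palais--Smale sequences carrying the extra information $P(u_k)\to 0$ (Lemmas~5.4--5.8). Your route---project via the odd fiber map $\sigma$ onto $M(c)$ and run Krasnoselskii-genus minimax on $\tilde I=I\circ\sigma$---gains the Pohozaev identity for free (since $P\equiv 0$ on $M(c)$), so the Jeanjean dilation-variable trick is unnecessary; conversely the paper's scheme avoids having to verify that $\sigma$ is a $C^1$ odd diffeomorphism and that $M(c)$ is a natural constraint for \emph{all} critical points (the paper's Lemma~4.4 is stated only for minimizers, though its proof extends verbatim).

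Two points where your sketch is thin. First, a minor redundancy: since $\sigma(u_n)\in M(c)$, one has $P(\sigma(u_n))=0$ identically, so there is no ``auxiliary Pohozaev information'' to extract---the work is rather in transferring criticality of $\tilde I$ back to $I|_{S_r(c)}$, which is exactly the natural-constraint argument. Second, and more substantively, your justification of $c_k\to+\infty$ is not quite right: $\tilde I$ being unbounded above is irrelevant. The mechanism both approaches need is the paper's Lemma~5.1, namely $\mu_n:=\inf_{V_n^{\bot}\cap S_r(c)}|\nabla u|_p^p/|u|_q^p\to+\infty$; in your framework one then argues that any compact symmetric $A$ with $\gamma(A)\ge k$ must intersect $V_{k-1}^{\bot}\cap S_r(c)$ (via the continuous Schauder projection onto $V_{k-1}$ and Borsuk--Ulam), and on that intersection the coercivity identity forces $\tilde I\ge\beta_{k-1}\to+\infty$. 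Once this is made explicit your proof closes; the compactness analysis you outline is essentially the content of the paper's Lemma~5.8.
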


Based on Theorem \ref{thm1.4}, we study the asymptotic behavior of normalized solutions to (\ref{eq1.1})-(\ref{eq1.2}) when $b \rightarrow 0^+$. We obtain the following result:
\begin{thm}\label{thm1.5} Let $p+\frac{2p^2}{3} < q < p^{\ast}$, $c > 0$ and $\{(u_n^b,\lambda_n^b)\} \subset S_r(c) \times \mathbb{R}^{-}$ is obtained in Theorem \ref{thm1.4}. Then, for any $\{b_m\} \rightarrow 0^{+}(n \rightarrow +\infty)$, there is a subsequence $\{b_{m_l}\}$ such that for each $n \in \mathbb{N}^{+}$, $u_n^{b_{m_l}} \rightarrow u_n^0$ in $W_{r}^{1,p}(\mathbb{R}^3)$ and $\lambda_n^{b_{m_l}} \rightarrow \lambda_n^0$ in $\mathbb{R}$ as $l \rightarrow +\infty$, where $\{(u_n^0,\lambda_n^0)\} \subset S_r(c) \times \mathbb{R}^{-}$ is a sequence of couples of weak solutions to the following equation
\begin{equation}\label{eq1.8}
-a\varDelta_p u - \lambda|u|^{p-2}u = |u|^{q-2}u, \enspace in \enspace \mathbb{R}^3.
\end{equation}
\end{thm}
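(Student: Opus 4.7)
The plan is to establish uniform-in-$b_m$ bounds on $\{(u_n^{b_m},\lambda_n^{b_m})\}$ for each fixed $n$, extract convergent subsequences using the compactness of the radial embedding $W^{1,p}_r(\mathbb{R}^3) \hookrightarrow L^q(\mathbb{R}^3)$ (valid for $p<q<p^\ast$), upgrade weak to strong convergence via the $(S_+)$-property of $-\Delta_p$, and finally pass to the limit in the equation and in the mass constraint.

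First, I would argue from the minimax construction underlying Theorem~\ref{thm1.4} (whose admissible class does not depend on $b$ and along which $I_b$ is monotone non-decreasing in $b$) that $\sup_m I_{b_m}(u_n^{b_m}) < +\infty$ for each fixed $n$. Since $u_n^{b_m}\in M(c)$, substituting $P(u_n^{b_m})=0$ into the energy gives
$$
I_{b_m}(u_n^{b_m}) = a\left(\frac{1}{p} - \frac{p}{3(q-p)}\right)|\nabla u_n^{b_m}|_p^p + b_m\left(\frac{1}{2p} - \frac{p}{3(q-p)}\right)|\nabla u_n^{b_m}|_p^{2p},
$$
and both coefficients are strictly positive since $q > p+\frac{2p^2}{3}$, so the energy bound yields $\sup_m|\nabla u_n^{b_m}|_p < +\infty$. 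Testing (\ref{eq1.1}) with $u_n^{b_m}$ gives
$$
\lambda_n^{b_m} c^p = (a + b_m|\nabla u_n^{b_m}|_p^p)|\nabla u_n^{b_m}|_p^p - |u_n^{b_m}|_q^q,
$$
and Gagliardo--Nirenberg controls the right-hand side, so $\{\lambda_n^{b_m}\}$ is bounded. Extract a subsequence $\{b_{m_l}\}$ such that $u_n^{b_{m_l}} \rightharpoonup u_n^0$ in $W^{1,p}_r$, $u_n^{b_{m_l}}\to u_n^0$ in $L^q$ and a.e.\ in $\mathbb{R}^3$, and $\lambda_n^{b_{m_l}}\to \lambda_n^0 \le 0$. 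Combining $P(u_n^{b_m})=0$ with the Gagliardo--Nirenberg inequality (whose relevant exponent $\frac{3(q-p)}{p}$ exceeds $2p$ by hypothesis) produces $\inf_m |\nabla u_n^{b_m}|_p > 0$, and then strong $L^q$-convergence forces $|u_n^0|_q > 0$, so $u_n^0 \not\equiv 0$.

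For strong convergence, I would test (\ref{eq1.1}) against $\phi = u_n^{b_m} - u_n^0$. Strong $L^q$-convergence handles the $|u|^{q-2}u$ term, Brezis' lemma gives $\int |u_n^{b_m}|^{p-2}u_n^{b_m}\, u_n^0 \to |u_n^0|_p^p$, and dividing by $a + b_m|\nabla u_n^{b_m}|_p^p \to a$ yields
$$
\lim_{l\to\infty}\langle -\Delta_p u_n^{b_{m_l}},\, u_n^{b_{m_l}} - u_n^0\rangle = \frac{\lambda_n^0}{a}\left(c^p - |u_n^0|_p^p\right) \le 0.
$$
Subtracting $\langle -\Delta_p u_n^0,\, u_n^{b_{m_l}} - u_n^0\rangle \to 0$ (by weak $L^p$-convergence of $\nabla u_n^{b_{m_l}}$ paired against the fixed $L^{p'}$-element $|\nabla u_n^0|^{p-2}\nabla u_n^0$) and invoking the monotonicity of $-\Delta_p$ forces the limit to be $0$, so Simon's inequality gives $\nabla u_n^{b_{m_l}}\to \nabla u_n^0$ strongly in $L^p$. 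Passing to the limit in (\ref{eq1.1}) tested against $C_c^\infty$-functions then shows that $u_n^0$ solves (\ref{eq1.8}) weakly with multiplier $\lambda_n^0$; combining the Pohozaev identity for (\ref{eq1.8}) with testing against $u_n^0$ yields
$$
\lambda_n^0 |u_n^0|_p^p = a|\nabla u_n^0|_p^p \cdot \frac{q(3-p) - 3p}{3(q-p)} < 0,
$$
because $q < p^\ast$ and $|\nabla u_n^0|_p > 0$. This strict sign forces $|u_n^0|_p = c$ in the previous identity, and together with $|u_n^{b_{m_l}}|_p = c$ and uniform convexity of $L^p$, strong $L^p$-convergence follows, completing strong $W^{1,p}_r$-convergence.

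The principal obstacle is the uniform upper bound $\sup_m I_{b_m}(u_n^{b_m}) < +\infty$, which rests on the minimax scheme from Theorem~\ref{thm1.4} being implemented with a $b$-independent admissible class (so that the $n$-th minimax level is locally bounded at $b=0^+$); once this is in place, the remainder is a careful combination of radial compactness with the $(S_+)$-monotonicity of the $p$-Laplacian, where the Kirchhoff coefficient $b_m$ vanishes uniformly in the limit.
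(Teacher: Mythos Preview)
Your proof is correct. The uniform upper bound $\sup_m I_{b_m}(u_n^{b_m})<+\infty$ is handled exactly as in the paper: the admissible class $\Gamma_n$ can be fixed independently of $b\in(0,1]$, and then $I_{b_m}\le I_1$ gives $\gamma_n^{b_m}(c)\le \gamma_n^{1}(c)<+\infty$. The difference with the paper lies in the compactness step. The paper first extracts a.e.\ convergence of $\nabla u_n^{b_m}$ via the localized cut-off test functions of Lemma~\ref{lem2.3} (as in the proof of Theorem~\ref{thm1.1}), then passes to the limit in the equation, and from the two resulting Nehari identities obtains
\[
a\bigl(|\nabla u_n^{b_m}|_p^p-|\nabla u_n^0|_p^p\bigr)-\lambda_n^0\bigl(|u_n^{b_m}|_p^p-|u_n^0|_p^p\bigr)=o_m(1);
\]
weak lower semicontinuity and $\lambda_n^0\le 0$ then force gradient--norm convergence, while the case $\lambda_n^0=0$ is excluded by contradiction with the $b$-independent lower bound $\gamma_n^{b_m}(c)\ge\beta_n>0$. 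Your route is more direct: testing the equation with $u_n^{b_m}-u_n^0$ and using the global $(S_+)$-monotonicity of $-\Delta_p$ gives strong gradient convergence in one stroke, without any localization, and the strict sign $\lambda_n^0<0$ is read off from the Pohozaev identity of the \emph{limit} equation rather than from the minimax lower bound. The price you pay is needing $u_n^0\neq 0$ \emph{a priori}, which you secure via a uniform lower bound on $|u_n^{b_m}|_q$; the paper's contradiction argument does not require this, but your method is shorter and avoids the cut-off machinery entirely.
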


As is well known, if $1 < p \leq 2$, the ground state to (\ref{eq2.2}) is unique (up to translation), see \cite{serrin2000uniqueness}. However, if $p > 2$, the uniqueness have not been completely solved. When $1 < p \leq 2$, we obtain an accurate description of constrained minimizer of $I$ for $i(c)$ when $p< q \leq p+\frac{p^2}{3}$ and a accurate solution to (\ref{eq1.1})-(\ref{eq1.2}) when $p+\frac{2p^2}{3} \leq q < p^{\ast}$, which is stated in the following two results.

\begin{thm}\label{thm1.6} We assume that $\frac{3}{2}< p \leq 2$, then
\par
(1) When $p < q < p+\frac{p^2}{3}$, $i(c)$ has a unique minimizer (up to translations), which is the form of $u_c = \frac{c\mu_q^{\frac{3}{p}}}{|Q|_p}Q(\mu_q x)$, where $\mu_q = \frac{t_q^{\frac{1}{p}}}{c}$ with $t_q$ being the unique minimum point of the function
\begin{equation}\label{eq1.9}
f_q(t) = \frac{a}{p}t+\frac{b}{2p}t^2-\frac{c^{q-\frac{3(q-p)}{p}}}{p|Q|_p^{q-p}}t^{\frac{3(q-p)}{p^2}}, t \in (0,+\infty).
\end{equation}
\par
(2) When $q = p+\frac{p^2}{3}$, if $c> a^{\frac{3}{p^2}}\left| Q \right|_p$, then $i(c)$ has a unique minimizer (up to translations)
\begin{equation}\label{eq1.10}
u_c = \frac{c\mu_q^{\frac{3}{p}}}{|Q|_p}Q(\mu_q x) \enspace where \enspace \mu_q = \frac{t_q^{\frac{1}{p}}}{c} \enspace with \enspace t_q = \frac{c^{\frac{p^2}{3}}-ap|Q|_p^{\frac{p^2}{3}}}{bp|Q|_p^{\frac{p^2}{3}}}.
\end{equation}
Moreover, $i(c) = -\frac{b}{2p}\left(\frac{c^{\frac{p^2}{3}}-ap|Q|_p^{\frac{p^2}{3}}}{bp|Q|_p^{\frac{p^2}{3}}}\right)^2$.
\end{thm}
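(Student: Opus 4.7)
The plan is to reduce the constrained minimization of $I|_{S(c)}$ to an elementary one-dimensional optimization via the sharp Gagliardo-Nirenberg inequality, and then to exploit uniqueness (up to translations) of its extremals---valid for $\frac{3}{2}<p\le 2$ by \cite{serrin2000uniqueness}---in order to pin down the minimizer explicitly. First, for every $u\in W^{1,p}(\mathbb{R}^3)$ the sharp Gagliardo-Nirenberg inequality reads
$$|u|_q^q\le\frac{q}{p|Q|_p^{q-p}}\,|\nabla u|_p^{3(q-p)/p}\,|u|_p^{q-3(q-p)/p},$$
with equality realized only by translates and scalings of $Q$. Restricting to $u\in S(c)$ and setting $t:=|\nabla u|_p^p$, this bounds the last term of $I(u)$ by $\frac{c^{q-3(q-p)/p}}{p|Q|_p^{q-p}}\,t^{3(q-p)/p^2}$ and yields the envelope $I(u)\ge f_q(t)$ with $f_q$ as in \eqref{eq1.9}.

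Next, I analyze $f_q$. In case (1) the exponent $\alpha:=3(q-p)/p^2$ lies in $(0,1)$, so $-Ct^{\alpha}$ contributes $C\alpha(1-\alpha)t^{\alpha-2}>0$ to $f_q''$; together with $\frac{b}{p}>0$ from the quadratic term, this makes $f_q$ strictly convex on $(0,\infty)$. Combined with $f_q(0^+)=0$, $f_q(t)\to+\infty$ as $t\to\infty$, and $f_q(t)<0$ for small $t>0$, strict convexity produces a unique global minimizer $t_q>0$. In case (2) one has $\alpha=1$, and $f_q$ reduces to a quadratic in $t$ whose linear coefficient $\frac{a}{p}-\frac{c^{p^2/3}}{p|Q|_p^{p^2/3}}$ is negative precisely when $c>a^{3/p^2}|Q|_p$; elementary calculus then delivers the claimed explicit $t_q$ and the corresponding value of $f_q(t_q)$.

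Finally, attainment and uniqueness. The candidate $u_c(x)=\frac{c\mu_q^{3/p}}{|Q|_p}Q(\mu_q x)$ saturates the Gagliardo-Nirenberg inequality by construction, and a change of variables gives $|u_c|_p=c$. Testing the $Q$-equation against $Q$ (Nehari) and combining with the $p$-Pohozaev identity yields $|\nabla Q|_p=|Q|_p$, so the choice $\mu_q=t_q^{1/p}/c$ forces $|\nabla u_c|_p^p=t_q$; hence $I(u_c)=f_q(t_q)$, and the envelope inequality then gives $i(c)=f_q(t_q)$ with $u_c$ a minimizer. For uniqueness, suppose $v\in S(c)$ realizes $i(c)$; the chain $f_q(t_q)=I(v)\ge f_q(|\nabla v|_p^p)\ge f_q(t_q)$ collapses to two equalities, which simultaneously force $v$ to achieve equality in Gagliardo-Nirenberg---hence to be a translation-scaling of $Q$---and to satisfy $|\nabla v|_p^p=t_q$. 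The pair of constraints $|v|_p=c$ and $|\nabla v|_p^p=t_q$ then fixes the two scaling parameters, so $v(x)=u_c(x-x_0)$ for some $x_0\in\mathbb{R}^3$. The only nontrivial input beyond routine computation is the uniqueness of Gagliardo-Nirenberg extremals in the range $\frac{3}{2}<p\le 2$, cited from \cite{serrin2000uniqueness}; the clean identity $|\nabla Q|_p=|Q|_p$ that powers the explicit formulas is a direct consequence of Nehari and Pohozaev applied to the tailored equation defining $Q$, and is the only piece of bookkeeping requiring care.
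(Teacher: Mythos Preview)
Your proposal is correct and follows essentially the same approach as the paper's proof: both use the sharp Gagliardo--Nirenberg inequality to get the envelope bound $I(u)\ge f_q(|\nabla u|_p^p)$, construct the explicit minimizer as a scaling of $Q$ using the identity $|\nabla Q|_p=|Q|_p$ (the paper's (3.3)), and deduce uniqueness from the equality case in Gagliardo--Nirenberg together with the constraint $|\nabla v|_p^p=t_q$. Your convexity argument for the uniqueness of $t_q$ is slightly more detailed than the paper's ``we can easily check,'' but otherwise the arguments coincide.
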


When $q \geq p+\frac{2p^2}{3}$, Theorem \ref{thm1.1} tells that $i(c)$ has no minimizer. Thus, motivated by \cite{zeng2017existence}, we investigate the mountain pass type critical points for $I(\cdot)$ on $S(c)$.

\begin{defn}\label{defn1.7} Given $c>0$, the functional $I(\cdot)$ is said to have mountain pass geometry on $S(c)$ if there exists $K(c) > 0$ such that
\begin{equation}\label{eq1.11}
\gamma(c) := \underset{h \in \Gamma(c)}{\text{inf}} \underset{t \in [0,1]}{\text{max}}I(h(t)) > \text{max}\{I(h(0)),I(h(1))\}
\end{equation}
holds in the set $\Gamma(c)=\{h \in C([0,1];S(c)): h(0) \in A_{K(c)}, I(h(1)) < 0\}$, where $A_{K(c)} = \{u \in S(c) : |\nabla u|_p^p \leq K(c)\}$.
\end{defn}

Next, when $\frac{3}{2} <p \leq 2$, under the following assumption
\begin{equation}\label{eq1.12}
q > p+\frac{2p^2}{3}, or \enspace q = p+\frac{2p^2}{3} \enspace and \enspace c > \left(\frac{b\left| Q\right|_{p}^{\frac{2p^2}{3}}}{2}\right)^{\frac{3}{2p^2-3p}},
\end{equation}
we investigate some properties of (\ref{eq1.11}) by some energy estimates. Moreover, we show that if $u_c \in S(c)$ is a critical point of $I(\cdot)$ on the level $\gamma(c)$, then it is a scaling of $Q(x)$. Still let $f_q(\cdot)$ be given by (\ref{eq1.9}) and note that it has a unique maximum point in $(0,+\infty)$ when (\ref{eq1.12}) holds. Then, we have the following theorem.

\begin{thm}\label{thm1.8} When $\frac{3}{2} <p \leq 2$, assume (\ref{eq1.12}) holds and let $\overline{t}_q$ be the unique maximum point of $f_q(t)$ in $(0,+\infty)$. Then $\gamma(c)= f_q(\overline{t}_q)$ and it can be attained by $\overline{u}_c = \frac{c\overline{\mu}_q^{\frac{3}{p}}}{|Q|_p}Q(\overline{\mu}_q x)$ where $\overline{\mu}_q = \frac{\overline{t}_q^{\frac{1}{p}}}{c}$. Meanwhile, $\overline{u}_c$ is a solution of (\ref{eq1.1})-(\ref{eq1.2}) for some $\lambda \in \mathbb{R}$.
\end{thm}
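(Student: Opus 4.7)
\textbf{Proof plan for Theorem \ref{thm1.8}.} The strategy is to reduce the mountain pass level $\gamma(c)$ to the one--dimensional problem $\max_{t>0}f_q(t)$, exploiting that for $\tfrac{3}{2}<p\leq 2$ the Gagliardo--Nirenberg optimizer $Q$ is unique up to translations and scalings. Recall the sharp G--N inequality
\[
|u|_q^{q}\leq\frac{q}{p|Q|_p^{q-p}}\,|\nabla u|_p^{\tfrac{3(q-p)}{p}}\,|u|_p^{\,q-\tfrac{3(q-p)}{p}},
\]
with equality iff $u$ is, up to translation, a rescaling $u_\mu(x):=\tfrac{c\mu^{3/p}}{|Q|_p}Q(\mu x)$. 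Multiplying the Euler--Lagrange equation for $Q$ by $Q$ and combining with its Pohozaev identity gives $|\nabla Q|_p^p=|Q|_p^p$, so $|\nabla u_\mu|_p^p=c^p\mu^p$. Setting $t:=|\nabla u|_p^p$ and substituting into $I(u)$ yields the sharp pointwise bound $I(u)\geq f_q(t)$ on $S(c)$, with equality exactly on the scaling family $\{u_\mu\}_{\mu>0}$.

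I would then establish $\gamma(c)\geq f_q(\overline{t}_q)$. Under (\ref{eq1.12}) one verifies that $f_q$ is strictly increasing on $(0,\overline{t}_q)$, has a unique strict maximum $\overline{t}_q$ with $f_q(\overline{t}_q)>0$, and satisfies $f_q(t)\to-\infty$ as $t\to+\infty$. Choose $K(c)\in(0,\overline{t}_q)$ small enough that $\tfrac{a}{p}K(c)+\tfrac{b}{2p}K(c)^{2}<f_q(\overline{t}_q)$; since $-\tfrac{1}{q}|u|_q^q\le 0$ this forces $I(u)<f_q(\overline{t}_q)$ on $A_{K(c)}$, while $I(h(1))<0<f_q(\overline{t}_q)$ by definition, so the strict inequality in (\ref{eq1.11}) holds. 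For any $h\in\Gamma(c)$, the continuous map $s\mapsto|\nabla h(s)|_p^{p}$ starts $\le K(c)<\overline{t}_q$ and ends at a value making $f_q<0$, hence necessarily above $\overline{t}_q$; the intermediate value theorem then furnishes $s^*$ with $|\nabla h(s^*)|_p^{p}=\overline{t}_q$, whence $\max_s I(h(s))\geq f_q(\overline{t}_q)$.

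For the matching upper bound I use the scaling family itself as an admissible path: set $h(s)=u_{\mu(s)}$ with $\mu(\cdot)$ a continuous increasing reparametrization from a small $\mu_0$ (so $h(0)\in A_{K(c)}$) to a large $\mu_1$ (so $I(h(1))<0$, possible since $f_q\to-\infty$). On this family G--N holds with equality, so $I(u_\mu)=f_q(c^p\mu^p)$, whose maximum equals $f_q(\overline{t}_q)$ at $\mu=\overline{\mu}_q=\overline{t}_q^{1/p}/c$. Hence $\gamma(c)=f_q(\overline{t}_q)$ and is attained by $\overline{u}_c=u_{\overline{\mu}_q}$. To verify that $\overline{u}_c$ solves (\ref{eq1.1})--(\ref{eq1.2}), I substitute the ansatz $\overline{u}_c(x)=AQ(Bx)$ into (\ref{eq1.1}) and use that $Q$ satisfies $-\alpha\Delta_p Q+\beta|Q|^{p-2}Q=|Q|^{q-2}Q$; comparing coefficients of $(|Q|^{p-2}Q)(Bx)$ and $(|Q|^{q-2}Q)(Bx)$ produces the consistency relation $(a+b|\nabla\overline{u}_c|_p^{p})B^p=\alpha A^{q-p}$, which after a short algebraic rearrangement is equivalent to $f_q'(\overline{t}_q)=0$, and gives $\lambda=-\beta A^{q-p}<0$ (negativity from $\beta>0$, which holds because $q<p^{\ast}$).

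The main difficulty I anticipate is the algebraic verification in the final step: showing cleanly that the critical--point condition $f_q'(\overline{t}_q)=0$ is precisely the consistency relation coming from the scaling ansatz, so that the Lagrange multiplier produced by coefficient matching coincides with the one corresponding to $\overline{u}_c$ as a critical point on $S(c)$. A secondary subtlety is the simultaneous choice of $K(c)$ in the second step, which must keep $A_{K(c)}$ strictly below the peak $\overline{t}_q$, ensure the strict inequality $\max\{I(h(0)),I(h(1))\}<\gamma(c)$ required by (\ref{eq1.11}), and still be large enough that suitable admissible paths do exist.
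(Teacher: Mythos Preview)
Your proposal is correct and follows essentially the same approach as the paper's proof: both reduce $\gamma(c)$ to $\max_{t>0}f_q(t)$ via the sharp Gagliardo--Nirenberg inequality, use the intermediate value theorem on $s\mapsto|\nabla h(s)|_p^p$ for the lower bound, the explicit scaling family of $Q$ for the matching upper bound, and then verify that $\overline{u}_c$ solves (\ref{eq1.1})--(\ref{eq1.2}) by substituting the ansatz and identifying the consistency relation with $f_q'(\overline{t}_q)=0$. Your treatment of the choice of $K(c)$ and the explicit formula $\lambda=-\beta A^{q-p}$ are in fact slightly more detailed than, but fully consistent with, what the paper does.
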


The paper is organized as follows. In Section 2, we present some preliminary results. In Section 3, we study the existence of minimizers for $i(c)$ in the $L^p$-subcritical and critical cases. In Section 4, we show the existence of normalized ground state in the $L^p$-supercritical case. In Section 5, we study the multiplicity and asymptotic behavior of normalized solutions in the $L^p$-supercritical case. In Section 6, we prove the existence and uniqueness of normalized solutions when $\frac{3}{2} < p \leq 2$.

\section{Preliminaries}
In this section, we introduce various preliminary results.

\begin{lem}\label{lem2.1} (Gagliardo-Nirenberg inequality of $p$-Laplacian type, \cite{agueh2008sharp}) For $p \in (\frac{3}{2},3)$ and any $q \in [p,p^{\ast})$, it holds that
\begin{equation}\label{eq2.1}
\left| u \right|_q \leq \left(\frac{q}{p}\frac{1}{\left| Q \right|_{p}^{q-p}}\right)^{\frac{1}{q}} \left| \nabla u \right|_{p}^{\frac{3(q-p)}{qp}} \left| u \right| _{p}^{1-\frac{3(q-p)}{qp}},
\end{equation}
where $\left| u \right|_s$ is the norm of $L^t(\mathbb{R}^3)$ and up to translations, $Q \in W^{1,p}(\mathbb{R}^3)$ is a ground-state solution to the following equation(\cite{weinstein1982nonlinear})
\begin{equation}\label{eq2.2}
-\frac{3(q-p)}{p^2}\varDelta _pu + \left(1+\frac{(p-3)(q-p)}{p^2}\right)\left| u \right|^{p-2}u = \left| u \right|^{q-2}u, x \in \mathbb{R}^3.
\end{equation}
Furthermore, when $1< p \leq2$, the equality holds if and only if $u(x) = \alpha Q(\beta x)$ for some $\alpha,\beta \in \mathbb{R}\backslash \{0\}$.
\end{lem}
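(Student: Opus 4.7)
The plan is to treat this as a standard sharp Gagliardo--Nirenberg type inequality and establish it by a variational argument identifying the optimizer with the ground state of (\ref{eq2.2}). First I would fix the exponents purely by scaling: for $u \in W^{1,p}(\mathbb{R}^3) \setminus \{0\}$ and $\lambda, \mu > 0$, consider $u_{\lambda,\mu}(x) = \mu u(\lambda x)$ and observe how $|u_{\lambda,\mu}|_q$, $|\nabla u_{\lambda,\mu}|_p$ and $|u_{\lambda,\mu}|_p$ transform. Demanding that the candidate ratio
\[
J(u) := \frac{|\nabla u|_p^{3(q-p)/p}\, |u|_p^{q - 3(q-p)/p}}{|u|_q^{q}}
\]
be invariant under $(\lambda,\mu)$--rescalings forces exactly the powers appearing in (\ref{eq2.1}), so the inequality is equivalent to the existence of a positive lower bound $J_{\min} := \inf_{u \neq 0} J(u) > 0$, and the constant in (\ref{eq2.1}) is a precise power of $J_{\min}$.

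Next I would show $J_{\min}$ is attained. Take a minimizing sequence $\{u_n\}$; by the scaling freedom above I may normalize $|u_n|_p = |\nabla u_n|_p = 1$, so $\{u_n\}$ is bounded in $W^{1,p}(\mathbb{R}^3)$. Replacing $u_n$ by its Schwarz symmetrization preserves the $L^p$ and $L^q$ norms and does not increase $|\nabla u_n|_p$ (Polya--Szeg\"o), so I may assume each $u_n$ is radial, nonnegative and nonincreasing. Then the compact embedding $W^{1,p}_r(\mathbb{R}^3) \hookrightarrow L^q(\mathbb{R}^3)$ for $p < q < p^{*}$ (via the standard radial decay estimate) yields $u_n \to u$ strongly in $L^q$, while weak lower semicontinuity of $|\nabla \cdot|_p$ and $|\cdot|_p$ gives $J(u) \leq \liminf J(u_n) = J_{\min}$. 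The strict positivity of the limit forces $u \neq 0$, so $u$ is a minimizer.

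Then I would derive the Euler--Lagrange equation for $J$. A minimizer $u$ satisfies a quasilinear equation of the form $-A\Delta_p u + B|u|^{p-2}u = C|u|^{q-2}u$ with constants $A,B,C > 0$ depending on $|\nabla u|_p$, $|u|_p$, $|u|_q$. A second scaling $u \mapsto \alpha u(\beta x)$ normalizes two of the three constants, and matching with (\ref{eq2.2}) pins down $u = \alpha_0 Q(\beta_0 x)$ for a ground state $Q$. Testing the equation against $Q$ itself and against $x\cdot\nabla Q$ (a Pohozaev identity for the $p$-Laplacian, obtained by multiplying by $x\cdot\nabla Q$ and integrating by parts) gives two independent identities from which I can solve explicitly for $|\nabla Q|_p^p$ and $|Q|_q^q$ in terms of $|Q|_p^p$; substituting back produces the sharp constant $(q/p)^{1/q}|Q|_p^{-(q-p)/q}$ exactly as in (\ref{eq2.1}).

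The main obstacle is the equality/uniqueness claim for $1 < p \leq 2$. Here I would invoke Serrin--Tang \cite{serrin2000uniqueness}, which gives uniqueness (up to translation) of the positive radial ground state of (\ref{eq2.2}) in that range; combining this with the classification obtained above---any optimizer in $J$ is of the form $\alpha Q(\beta\cdot)$ modulo translation---yields precisely the stated rigidity $u(x) = \alpha Q(\beta x)$. For $p > 2$ the uniqueness of $Q$ is not known in full generality, which is exactly why the equality case is restricted to $1 < p \leq 2$ in the statement.
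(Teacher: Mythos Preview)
The paper does not actually prove Lemma~\ref{lem2.1}; it is stated as a preliminary result quoted from Agueh \cite{agueh2008sharp} (with the Euler--Lagrange equation attributed to \cite{weinstein1982nonlinear} and the uniqueness for $1<p\le 2$ to \cite{serrin2000uniqueness}), and no argument is given in the text. Your outline is the standard Weinstein-type variational proof adapted to the $p$-Laplacian setting: fix the exponents by scaling, minimize the scale-invariant quotient via Schwarz symmetrization and the compact radial embedding, identify the minimizer with a ground state through the Euler--Lagrange equation, and then extract the sharp constant from the Nehari/Pohozaev identities (which the paper itself records as (\ref{eq3.3})). This is exactly the approach of the cited references, so in effect you are reconstructing what the paper merely imports.

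One small point worth tightening: after normalizing $|u_n|_p=|\nabla u_n|_p=1$ you have $J(u_n)=|u_n|_q^{-q}$, so minimizing $J$ is maximizing $|u_n|_q$; the strong $L^q$ limit then automatically has $|u|_q=\sup>0$, which is how you get $u\neq 0$. Also, to conclude that $u$ is an \emph{exact} minimizer (not just $J(u)\le J_{\min}$) you should note that by scaling invariance you may rescale the weak limit back to $|u|_p=|\nabla u|_p=1$ before comparing. These are routine, and your invocation of Serrin--Tang for the equality case when $1<p\le 2$ is precisely the mechanism the paper relies on elsewhere (see the remark before Theorem~\ref{thm1.6}).
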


\begin{lem}\label{lem2.2} Let $p \in (\frac{3}{2},3)$, $q \in (p,p^{\ast})$, if $(u,\lambda) \in W^{1,p}(\mathbb{R}^3) \times \mathbb{R}$ is a weak solution of (\ref{eq1.1})-(\ref{eq1.2}), then the following identity
\begin{equation}\label{eq2.3}
P(u) = a\left| \nabla u \right|_{p}^{p} + b\left| \nabla u \right|_{p}^{2p} - \frac{3(q-p)}{pq}\left| u \right|_{q}^{q} = 0
\end{equation}
holds. Moreover, it can be obtained that $\lambda < 0$.
\end{lem}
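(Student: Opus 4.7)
The plan is to derive the Pohozaev identity via an $L^p$-norm-preserving scaling argument, and then combine it with the equation tested against $u$ itself to pin down the sign of $\lambda$.

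Since $(u,\lambda)$ is a weak solution of (\ref{eq1.1})--(\ref{eq1.2}), it is an unconstrained critical point of the reduced functional $J(v) := I(v) - \frac{\lambda}{p} |v|_p^p$. I would consider the dilation $u_t(x) := t^{3/p} u(tx)$ for $t > 0$, which preserves the $L^p$ norm. A direct change of variable gives
\[
|u_t|_p^p = |u|_p^p, \qquad |\nabla u_t|_p^p = t^p |\nabla u|_p^p, \qquad |u_t|_q^q = t^{3(q-p)/p} |u|_q^q.
\]
Substituting into $J(u_t)$, differentiating in $t$, and evaluating at $t=1$ formally yields
\[
\tfrac{d}{dt} J(u_t)\big|_{t=1} = a |\nabla u|_p^p + b |\nabla u|_p^{2p} - \tfrac{3(q-p)}{pq} |u|_q^q = P(u),
\]
which must vanish since $u$ is a critical point of $J$. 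The main obstacle here is justifying this formal computation rigorously for a merely $W^{1,p}$ weak solution; the standard route is the classical $p$-Pohozaev identity obtained by testing (\ref{eq1.1}) against $x \cdot \nabla u$ and integrating by parts, which requires known $C^{1,\alpha}_{\mathrm{loc}}$ regularity and appropriate decay/cutoff arguments for quasilinear equations. I would cite the standard Pohozaev identity for $p$-Laplacian equations and apply it to the equation $-(a+b|\nabla u|_p^p)\Delta_p u = \lambda |u|^{p-2}u + |u|^{q-2}u$, noting that the prefactor $a+b|\nabla u|_p^p$ is merely a positive constant once $u$ is fixed.

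For the sign of $\lambda$, I would test equation (\ref{eq1.1}) against $u$ itself, obtaining
\[
a|\nabla u|_p^p + b|\nabla u|_p^{2p} = \lambda c^p + |u|_q^q.
\]
Subtracting the Pohozaev identity $a|\nabla u|_p^p + b|\nabla u|_p^{2p} = \tfrac{3(q-p)}{pq}|u|_q^q$ leaves
\[
\lambda c^p = \left( \tfrac{3(q-p)}{pq} - 1 \right) |u|_q^q = \tfrac{3q - 3p - pq}{pq}\, |u|_q^q.
\]
The constraint $p < q < p^{\ast} = \tfrac{3p}{3-p}$ is equivalent to $q(3-p) < 3p$, i.e., $3q - 3p - pq < 0$. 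Since $|u|_q^q > 0$ (otherwise $u \equiv 0$ would contradict $|u|_p = c > 0$), the right-hand side is strictly negative, whence $\lambda < 0$, completing the argument.
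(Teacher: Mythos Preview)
Your argument is correct and is exactly the standard route: the paper itself omits the proof entirely, deferring to Lemma~2.3 of \cite{baldelli2022normalized} and Lemma~2.6 of \cite{ye2015sharp}, and those references proceed precisely as you do --- derive the Pohozaev identity (via the scaling/$x\cdot\nabla u$ testing, with the nonlocal coefficient $a+b|\nabla u|_p^p$ treated as a fixed positive constant), then combine it with the Nehari-type relation obtained by testing against $u$ to compute $\lambda c^p = \frac{3q-3p-pq}{pq}|u|_q^q < 0$ from $q<p^\ast$. Your caveat about needing $C^{1,\alpha}_{\mathrm{loc}}$ regularity and a cutoff to justify the Pohozaev step for a $W^{1,p}$ weak solution is appropriate and is handled in the cited literature.
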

\begin{proof}
the proof is similar to the Lemma 2.3 of \cite{baldelli2022normalized} and Lemma 2.6 of \cite{ye2015sharp}, so we omit it.
\end{proof}

Next, we introduce a useful elementary inequality.
\begin{lem}\label{lem2.3} There exists $c_s > 0$ such that for any $x,y \in \mathbb{R}^3$,
\begin{equation}\label{eq2.4}
\begin{cases}
\left< \left| x \right|^{s-2}x - \left| y \right|^{s-2}y, x-y \right>_{\mathbb{R}^3} \geq c_s\left| x-y \right|^s & for \quad 2 \leq s < 3,\\
(\left| x \right| + \left| y \right|)^{2-s}\left< \left| x \right|^{s-2}x - \left| y \right|^{s-2}y, x-y \right>_{\mathbb{R}^3} \geq c_s\left| x-y \right|^2 & for\quad 1< s <2,
\end{cases}
\end{equation}
where $\left< \cdot,\cdot \right>_{\mathbb{R}^3}$ denotes the standard inner product in $\mathbb{R}^3$.
\end{lem}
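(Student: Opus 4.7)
I would approach this as a standard ``Simon-type'' inequality for the $p$-harmonic vector field, and treat the two ranges of $s$ separately using the same integral representation as the starting point. Writing $w(t) = y + t(x-y)$ for $t \in [0,1]$, we have
\begin{equation*}
\bigl\langle |x|^{s-2}x - |y|^{s-2}y, x-y\bigr\rangle_{\mathbb{R}^3} = \int_0^1 \frac{d}{dt}\bigl\langle |w(t)|^{s-2} w(t), x-y\bigr\rangle_{\mathbb{R}^3}\, dt = \int_0^1 \bigl[|w(t)|^{s-2}|x-y|^2 + (s-2)|w(t)|^{s-4}\langle w(t), x-y\rangle^2\bigr]\, dt.
\end{equation*}
Call the two summands $A(t)$ and $B(t)$. (If $x$ and $y$ are collinear with the origin or if one of them is $0$, the identity should be justified by a standard regularization $|\cdot|^{s-2}\mapsto(|\cdot|^2+\varepsilon)^{(s-2)/2}$ and letting $\varepsilon\to 0^+$.)

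For the range $1<s<2$, the term $B(t)$ is negative, but Cauchy--Schwarz gives $\langle w(t), x-y\rangle^2 \leq |w(t)|^2|x-y|^2$, so $A(t)+B(t) \geq (s-1)|w(t)|^{s-2}|x-y|^2$. Since $|w(t)| \leq t|x|+(1-t)|y| \leq |x|+|y|$ and the exponent $s-2$ is negative, $|w(t)|^{s-2} \geq (|x|+|y|)^{s-2}$ for every $t\in[0,1]$. Integrating yields
\begin{equation*}
\bigl\langle |x|^{s-2}x-|y|^{s-2}y,x-y\bigr\rangle \geq (s-1)(|x|+|y|)^{s-2}|x-y|^2,
\end{equation*}
which is the claimed inequality with $c_s = s-1$ after multiplying both sides by $(|x|+|y|)^{2-s}$.

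For $2\leq s<3$, the term $B(t)$ is nonnegative, so it is enough to prove the cleaner statement $\int_0^1|w(t)|^{s-2}\,dt \geq c\,|x-y|^{s-2}$. I would do this geometrically: decompose $y$ along and orthogonally to $z:=x-y$, so that $|w(t)|^2 = d^2 + \sigma(t)^2 |z|^2$ where $d$ is the distance from the origin to the line through $y$ and $x$, and $\sigma(t) = \alpha+t$ ranges over an interval $I$ of length $1$. After the substitution $u=\sigma|z|$ one is left to estimate $|z|^{-1}\int_{I'} (d^2+u^2)^{(s-2)/2}\,du$ with $|I'|=|z|$. Using $(d^2+u^2)^{(s-2)/2}\geq |u|^{s-2}$ (valid since $s\geq 2$) and treating separately the cases whether $I'$ contains $0$ (in which case one of the half-intervals has length at least $|z|/2$) or not (in which case super-additivity $(a+|z|)^{s-1}-a^{s-1}\geq |z|^{s-1}$ is used), the integral is bounded below by a universal multiple of $|z|^{s-1}$. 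Dividing by $|z|$ gives the desired lower bound $c_s|x-y|^{s-2}$.

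The only genuinely delicate step is this last geometric estimate on $\int_0^1 |w(t)|^{s-2}\,dt$ in the range $s\geq 2$; the inequality is trivial if one is content with lower bounds depending on $|x|$ and $|y|$ separately, but getting a bound purely in terms of $|x-y|$ requires the case split above. Everything else is a direct manipulation of the integral identity and an application of Cauchy--Schwarz, so the proof should be short. In practice, since Lemma \ref{lem2.3} is a well-known classical fact, one may either present the argument sketched here or simply cite the standard references (e.g.\ the monographs on monotone operators and $p$-Laplacian regularity); the authors have indeed chosen to omit the proof and rely on this.
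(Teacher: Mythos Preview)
Your proof sketch is correct, and you have already correctly observed the essential point: the paper does not prove Lemma~\ref{lem2.3} at all. It is stated as ``a useful elementary inequality'' with no proof and no explicit reference, and the authors proceed directly to the consequences (\ref{eq2.5})--(\ref{eq2.6}). So there is no approach in the paper to compare yours against.

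For the record, the argument you outline is one of the standard routes to this Simon-type inequality. The integral representation via $w(t)=y+t(x-y)$ and the Cauchy--Schwarz step for $1<s<2$ are entirely routine and yield the sharp constant $c_s=s-1$. For $2\le s<3$, your reduction to bounding $\int_0^1|w(t)|^{s-2}\,dt$ from below by a multiple of $|x-y|^{s-2}$, together with the line-parametrization and the case split on whether the projected interval contains the origin, is a clean and correct way to handle it; the super-additivity $(a+|z|)^{s-1}-a^{s-1}\ge |z|^{s-1}$ for $s\ge 2$ is exactly the right tool in the second case. Nothing is missing.
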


For $u,v \in W^{1,p}(\mathbb{R}^3)$, it follows from (\ref{eq2.4}) that ,for $2 \leq p < 3$,
\begin{equation}\label{eq2.5}
c_p\int_{\mathbb{R}^3}{|\nabla u - \nabla v|^{p}dx} \leq \int_{\mathbb{R}^3}{(|\nabla u|^{p-2}\nabla u-|\nabla v|^{p-2}\nabla v)\nabla (u-v) dx},
\end{equation}
and for $ \frac{3}{2} < p < 2$,
\begin{equation}\label{eq2.6}
\begin{split}
& c_{p}^{\frac{p}{2}}\int_{\mathbb{R}^3}{|\nabla u - \nabla v|^{p}dx} \\
& \leq \int_{\mathbb{R}^3}{(T(u,v))^{\frac{p}{2}}(|\nabla u| + |\nabla v|)^{\frac{p(2-p)}{2}}dx}\\
& \leq \left(\int_{\mathbb{R}^3}{(|\nabla u|^{p-2}\nabla u-|\nabla v|^{p-2}\nabla v)\nabla (u-v) dx}\right)^{\frac{p}{2}}\left(\int_{\mathbb{R}^3}{(|\nabla u| + |\nabla v|)^{p}dx}\right)^{\frac{2-p}{2}},
\end{split}
\end{equation}
where
$$
T(u,v) = \left< \left| \nabla u \right|^{p-2}\nabla u - \left| \nabla v \right|^{p-2}\nabla v, \nabla u- \nabla v \right>_{\mathbb{R}^3},
$$
note that the domain of integration in (\ref{eq2.5})-(\ref{eq2.6}) can be replaced by any open domain of $\mathbb{R}^3$.

\begin{lem}\label{lem2.4} (\cite{lions1983nonlinear},Lemma 3) Let $I \in C^1(W^{1,p}(\mathbb{R}^3),\mathbb{R})$. If $\{ u_n \} \subset S(c)$ is bounded in $W^{1,p}(\mathbb{R}^3)$, then
\begin{equation}
\begin{split}
& (I|_{S(c)})^{\prime} (u_n) \rightarrow 0 \quad \text{in} \quad \left(W^{1,p}(\mathbb{R}^3)\right)^{\prime}  \quad as \quad n \rightarrow \infty \\
& \Leftrightarrow 
I^{\prime}(u_n) - \frac{1}{c^p}\left< I^{\prime}(u_n) , u_n \right>\left| u_n \right|^{p-2}u_n \rightarrow 0 \quad \text{in} \quad \left(W^{1,p}(\mathbb{R}^3)\right)^{\prime} \quad \text{as} \quad n \rightarrow \infty. \\
\end{split}
\nonumber
\end{equation}
\end{lem}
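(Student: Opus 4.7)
The plan is to exploit the standard Lagrange-multiplier decomposition. Observe first that
$$T_{u_n}S(c) = \Bigl\{\varphi \in W^{1,p}(\mathbb{R}^3) : \int_{\mathbb{R}^3}|u_n|^{p-2}u_n\,\varphi\,dx = 0\Bigr\},$$
since $S(c)$ is the level set of the $C^1$ functional $u \mapsto |u|_p^p$, whose Fr\'echet derivative pairs with $\varphi$ as $p\int_{\mathbb{R}^3}|u|^{p-2}u\,\varphi\,dx$. Setting $\lambda_n := c^{-p}\langle I'(u_n), u_n\rangle$, my first step is a direct pairing calculation: for any $\psi \in W^{1,p}(\mathbb{R}^3)$, writing $\psi = \varphi_\psi + \alpha_\psi u_n$ with
$$\alpha_\psi := \frac{1}{c^p}\int_{\mathbb{R}^3}|u_n|^{p-2}u_n\,\psi\,dx, \qquad \varphi_\psi := \psi - \alpha_\psi u_n,$$
a quick check shows $\varphi_\psi \in T_{u_n}S(c)$, and the key identity
$$\bigl(I'(u_n) - \lambda_n|u_n|^{p-2}u_n\bigr)[\psi] = I'(u_n)[\varphi_\psi]$$
then follows because the coefficient of $\alpha_\psi$, namely $I'(u_n)[u_n] - \lambda_n c^p$, vanishes by the definition of $\lambda_n$.

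The implication $(\Leftarrow)$ drops out immediately: for $\varphi \in T_{u_n}S(c)$ the tangency condition makes the $\lambda_n$-term vanish, giving $I'(u_n)[\varphi] = \bigl(I'(u_n) - \lambda_n|u_n|^{p-2}u_n\bigr)[\varphi]$, which is bounded in absolute value by $\|I'(u_n) - \lambda_n|u_n|^{p-2}u_n\|_{(W^{1,p})'}\,\|\varphi\|_{W^{1,p}}$; taking the supremum over the unit ball of $T_{u_n}S(c)$ forces $(I|_{S(c)})'(u_n) \to 0$.

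For the forward implication $(\Rightarrow)$, I would use H\"older's inequality together with $|u_n|_p = c$ to obtain $|\alpha_\psi| \leq c^{-1}\|\psi\|_{W^{1,p}}$, and then invoke the assumed $W^{1,p}$-boundedness of $\{u_n\}$ to produce a constant $M > 0$, independent of $n$, such that $\|\varphi_\psi\|_{W^{1,p}} \leq M\|\psi\|_{W^{1,p}}$ for every $\psi$. Substituting into the key identity yields
$$\bigl|\bigl(I'(u_n) - \lambda_n|u_n|^{p-2}u_n\bigr)[\psi]\bigr| \leq M\,\|(I|_{S(c)})'(u_n)\|\,\|\psi\|_{W^{1,p}}$$
uniformly in $\psi$, which delivers the desired convergence in $(W^{1,p}(\mathbb{R}^3))'$. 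The only delicate point is securing this uniform tangential bound, and it is precisely here that the $W^{1,p}$-boundedness of $\{u_n\}$ is used; past that, the argument is routine linear bookkeeping.
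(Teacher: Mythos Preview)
The paper does not supply its own proof of this lemma; it simply cites it as Lemma~3 of \cite{lions1983nonlinear} and moves on. Your argument is the standard Lagrange-multiplier decomposition underlying that cited result and is correct: the tangent-space description, the splitting $\psi=\varphi_\psi+\alpha_\psi u_n$, the key identity $(I'(u_n)-\lambda_n|u_n|^{p-2}u_n)[\psi]=I'(u_n)[\varphi_\psi]$, and the use of boundedness of $\{u_n\}$ to control $\|\varphi_\psi\|_{W^{1,p}}$ uniformly all check out. There is nothing to compare against in the paper itself; your write-up is a faithful reconstruction of the classical proof.
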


\section{Proof of Theorem 1.1}
In this section, we study the case when $p < q \leq p+\frac{2p^2}{3}$. We first give a few of important lemmas before the proof of Theorem \ref{thm1.1}.

\begin{lem}\label{lem3.1}
Assume that $p<q<p+\frac{2p^2}{3}$, then
\par (1) For each $c>0$, $i(c)$ is well defined and $i(c) \leq 0$.
\par (2) If $p<q<p+\frac{p^2}{3}$,  then $i(c)<0$ for all $c>0$.
\par (3) If $q=p+\frac{p^2}{3}$, then $i(c) < 0 $ for each $c>a^{\frac{3}{p^2}}\left| Q \right|_p$ and $i(c) = 0$ for each $0 < c \leq a^{\frac{3}{p^2}}\left| Q \right|_p$.
\par (4) If $p+\frac{p^2}{3}<q<p+\frac{2p^2}{3}$, then $i(c)<0$ for each $c>c^{\ast}$ and $i(c)=0$ for each $c \leq c^{\ast}$,
where
$$
c^{\ast} = \left[ p\left| Q \right|_{p}^{q-p}\left(\frac{ap}{2p^2-3q+3p)}\right)^{\frac{2p^2-3q+3p}{p^2}}\left(\frac{bp}{6pq-8p^2}\right)^{\frac{3q-3p-p^2}{p^2}} \right] ^{\frac{p}{pq-3q+3p}}.
$$
\end{lem}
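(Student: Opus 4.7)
The strategy is to reduce the constrained functional $I$ on $S(c)$ to a one-parameter model function on $[0,\infty)$ via the sharp Gagliardo-Nirenberg inequality of Lemma~\ref{lem2.1}. For any $u\in S(c)$, setting $s := |\nabla u|_p^p$ and using $|u|_p = c$, the GN estimate produces
\[
I(u) \;\geq\; f_q(s) \;:=\; \frac{a}{p}s + \frac{b}{2p}s^{2} - \frac{c^{\,q-3(q-p)/p}}{p|Q|_p^{q-p}}\, s^{\gamma},\qquad \gamma := \frac{3(q-p)}{p^{2}}.
\]
Under the hypothesis $q<p+\tfrac{2p^{2}}{3}$ one has $\gamma<2$, so the quadratic term dominates at infinity and $f_q$ is bounded below on $[0,\infty)$. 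Hence $i(c)>-\infty$, settling well-definedness. The matching bound $i(c)\leq 0$ comes from the mass-preserving scaling $u_t(x):=t^{3/p}u(tx)\in S(c)$, for which
\[
I(u_t)=\frac{a}{p}t^{p}|\nabla u|_p^{p} + \frac{b}{2p}t^{2p}|\nabla u|_p^{2p} - \frac{t^{\,3(q-p)/p}}{q}|u|_q^{q} \longrightarrow 0\text{ as }t\to 0^{+}.
\]
This proves part~(1). For part~(2), the restriction $q<p+\tfrac{p^{2}}{3}$ forces $\tfrac{3(q-p)}{p}<p$, so the negative term dominates the two positive terms as $t\to 0^{+}$, giving $I(u_t)<0$ for small $t$ and hence $i(c)<0$.

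For the threshold parts (3) and (4), I would exploit that the GN inequality is attained at $Q$, so the family $u_\beta(x):=(c/|Q|_p)\beta^{3/p}Q(\beta x)\in S(c)$, $\beta>0$, saturates GN and $|\nabla u_\beta|_p^{p}$ sweeps the whole of $(0,\infty)$ as $\beta$ does. Consequently $I(u_\beta)=f_q(|\nabla u_\beta|_p^{p})$, which together with the lower bound gives the exact identity
\[
i(c)=\inf_{s\geq 0}f_q(s),
\]
so the sign of $i(c)$ is entirely encoded in the sign of $\min f_q$. In part~(3) we have $\gamma=1$, so $f_q(s)=\bigl(\tfrac{a}{p}-\tfrac{c^{p^{2}/3}}{p|Q|_p^{p^{2}/3}}\bigr)s+\tfrac{b}{2p}s^{2}$, and $\min f_q$ vanishes exactly when the linear coefficient is nonnegative, i.e.\ $c\leq a^{3/p^{2}}|Q|_p$, and is strictly negative otherwise. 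In part~(4) we have $1<\gamma<2$; since $f_q(0)=0$, $f_q'(0^{+})=a/p>0$, and $f_q(s)\to+\infty$, $f_q$ stays nonnegative until the weight of the $-Cs^{\gamma}$ term (monotonically increasing in $c$) drives it below zero. The critical value $c^{*}$ is then determined by the coincidence condition $f_q(s^{*})=f_q'(s^{*})=0$, which after elimination yields $s^{*}=\tfrac{A(\gamma-1)}{B(2-\gamma)}$ with $A=a/p,\;B=b/(2p)$, and back-substitution produces the closed-form expression for $c^{*}$ in the statement.

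The main obstacle is the case~(4) bookkeeping: verifying that the formula for $c^{*}$ obtained from the coincidence system matches the displayed product of powers, and establishing the sharp trichotomy $\min f_q>0$, $=0$, $<0$ as $c$ crosses $c^{*}$. This rests on the monotonicity in $c$ of the single coefficient $C(c)=c^{\,q-3(q-p)/p}/(p|Q|_p^{q-p})$, which is strictly increasing because $q-3(q-p)/p>0$ throughout $(p,p^{*})$. Everything else in the proof is a routine combination of Lemma~\ref{lem2.1} with the $L^p$-preserving scaling $u\mapsto u_t$.
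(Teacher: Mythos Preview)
Your proposal is correct and, for parts (1)--(3), follows the paper almost verbatim: the Gagliardo--Nirenberg lower bound combined with the mass-preserving dilation $u_t=t^{3/p}u(t\cdot)$, and in (3) the explicit $Q$-scaling family together with the identities $|\nabla Q|_p^p=|Q|_p^p=(p/q)|Q|_q^q$ (equation~(\ref{eq3.3}) in the paper). Your unifying observation that $I(u_\beta)=f_q(|\nabla u_\beta|_p^p)$ along the $Q$-family, giving the exact identity $i(c)=\inf_{s\ge0}f_q(s)$, is valid for all $p\in(3/2,3)$ since it only requires that $Q$ \emph{attains} equality in Lemma~\ref{lem2.1}, not the uniqueness of optimizers.

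For part (4) the two arguments diverge in packaging but not in substance. The paper bounds $As+Bs^2$ below by $Ks^{\gamma}$ via Young's (weighted AM--GM) inequality with exponents $p_1=2-\gamma$, $p_2=\gamma-1$, obtaining $K=(A/p_1)^{p_1}(B/p_2)^{p_2}$ and hence $c^*$ directly; equality in Young then pins down the test scale $t_0$. You instead locate $c^*$ through the double-tangency system $f_q(s^*)=f_q'(s^*)=0$, solving to $s^*=A(\gamma-1)/(B(2-\gamma))$ and back-substituting. These are the same computation in disguise: Young's inequality is saturated exactly when the two summands are in the ratio $p_1:p_2$, which is your $s^*$. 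One small slip to fix: since $f_q(0)=0$ always, the alternative is a dichotomy $\inf_{s\ge0}f_q\in\{0\}\cup(-\infty,0)$, not a trichotomy; the strict monotonicity of $C(c)$ in $c$ that you note then yields the sharp threshold.
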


\begin{proof}
(1) For any $c>0$ and $u \in S(c)$, by the Gagaliardo-Nirenberg inequality in Lemma \ref{lem2.1}, it is obtained that

\begin{equation}\label{eq3.1}
I(u) \geq \frac{a}{p}\left| \nabla u \right|_{p}^{p} + \frac{b}{2p}\left| \nabla u \right|_{p}^{2p} - \frac{c^{q-\frac{3(q-p)}{p}}}{p\left| Q \right|_{p}^{q-p}} \left| \nabla u \right|_{p}^{\frac{3(q-p)}{p}}.
\end{equation}

Since $0 < \frac{3(q-p)}{p} < 2p$, $I$ is bouned from below on $S(c)$. Set $u_t(x) := t^{\frac{3}{p}}u(tx)$, $t>0$, then $u_t \in S(c)$ and
\begin{equation}\label{eq3.2}
i(c) \leq I(u_t) = t^{p}\frac{a}{p}\left| \nabla u \right|_{p}^{p} + t^{2p}\frac{b}{2p}\left| \nabla u \right|_{p}^{2p} - t^{\frac{3(q-p)}{p}}\frac{1}{q}\left| u \right|_{q}^{q} \rightarrow 0 \quad as\quad t \rightarrow 0.
\end{equation}
Hence, $i(c) \leq 0$ for all $c > 0$.
\par
(2) If $p<q<p+\frac{p^2}{3}$, then $0 < \frac{3(q-p)}{p} < p$, so (\ref{eq3.2}) implies that $i(c)<0$ for all $c>0$.
\par
(3) If $q=p+\frac{p^2}{3}$, similar to (\ref{eq3.1}), we see that
\begin{equation}
\begin{split}
I(u) 
&\geq \frac{a}{p}\left| \nabla u \right|_{p}^{p} + \frac{b}{2p}\left| \nabla u \right|_{p}^{2p} - \frac{c^{\frac{p^2}{3}}}{p\left| Q \right|_{p}^{\frac{p^2}{3}}} \left| \nabla u \right|_{p}^{p}.\\
&= \frac{b}{2p}\left| \nabla u \right|_{p}^{2p} + \frac{a}{p}\left| \nabla u \right|_{p}^{p}\left(1-\frac{c^{\frac{p^2}{3}}}{a\left| Q \right|_{p}^{\frac{p^2}{3}}}\right).
\end{split}
\nonumber
\end{equation}
Thus, when $0 < c \leq a^{\frac{3}{p^2}}\left| Q \right|_p$, we have $i(c) \geq 0$. By $i(c) \leq 0$, it is obtained that $i(c) = 0$ for all $0 < c \leq a^{\frac{3}{p^2}}\left| Q \right|_p$.
\par
By (\ref{eq2.2}) and corresponding Pohozaev identity, we have

\begin{equation}\label{eq3.3}
\left| \nabla Q \right|_{p}^{p} = \left| Q \right|_{p}^{p} = \frac{p}{q}\left| Q \right|_{q}^{q}.
\end{equation}

When $c > a^{\frac{3}{p^2}}\left| Q \right|_p$, set 
\begin{equation}\label{eq3.4}
Q_t(x) := \frac{c}{\left| Q \right|_{p}}t^{\frac{3}{p}}Q(tx), \quad t>0,
\end{equation}
hence, $Q_t(x) \in S(c)$. By (\ref{eq3.3}), we have
\begin{equation}\label{eq3.5}
\begin{split}
I(Q_t) 
& = \frac{a}{p}(ct)^p+\frac{b}{2p}(ct)^{2p}-\frac{c^qt^p}{p\left| Q \right|_{p}^{q-p}}\\
& = \frac{b}{2p}(ct)^{2p}+\frac{a(ct)^p}{p}\left(1-\frac{c^{\frac{p^2}{3}}}{a\left| Q \right|_{p}^{\frac{p^2}{3}}}\right).
\end{split}
\end{equation}
Thus, $I(Q_t) < 0$ as $t \rightarrow 0$. Since $i(c) \leq I(Q_t)$, we conclude that $i(c)<0$ when $c > a^{\frac{3}{p^2}}\left| Q \right|_p$.
\par
(4) If $p+\frac{p^2}{3}<q<p+\frac{2p^2}{3}$, similar to (\ref{eq3.5}), we have
\begin{equation}\label{eq3.6}
I(Q_t) = \frac{a}{p}(ct)^p+\frac{b}{2p}(ct)^{2p}-\frac{c^q}{p\left| Q \right|_{p}^{q-p}}t^{\frac{3(q-p)}{p}}.
\end{equation}
By Young's inequality, we have
\begin{equation}\label{eq3.7}
\frac{a}{p}(ct)^p+\frac{b}{2p}(ct)^{2p} 
\geq 
\left(\frac{a}{pp_1}c^{p}t^{p}\right)^{p_1}\left(\frac{b}{2pp_2}c^{2p}t^{2p}\right)^{p_2}
= t^{p+pp_2}c^{p+pp_2}\left(\frac{a}{pp_1}\right)^{p_1}\left(\frac{b}{2pp_2}\right)^{p_2},
\end{equation}
where $0<p_1,p_2 <1$, $p_1+p_2=1$ and the equality holds if and only if $\frac{a}{pp_1}c^{p}t^{p} = \frac{b}{2pp_2}c^{2p}t^{2p}$.
Choosing $p_1=2-\frac{3(q-p)}{p^2}$ and $p_2=\frac{3(q-p)}{p^2}-1$. Since $p+\frac{p^2}{3}<q<p+\frac{2p^2}{3}$, we have $0<p_1,p_2 <1$ and $p_1+p_2=1$. Then, by (\ref{eq3.6}), we have
\begin{equation}
\begin{split}
& I(Q_t) \\
& \geq c^{\frac{3(q-p)}{p}}t^{\frac{3(q-p)}{p}}\left(\frac{ap}{2p^2-3q+3p)}\right)^{\frac{2p^2-3q+3p}{p^2}}\left(\frac{bp}{6pq-8p^2}\right)^{\frac{3q-3p-p^2}{p^2}} - \frac{c^q}{p\left| Q \right|_{p}^{q-p}}t^{\frac{3(q-p)}{p}}\\
& = \frac{c^{\frac{3(q-p)}{p}}t^{\frac{3(q-p)}{p}}}{p\left| Q \right|_{p}^{q-p}}\left[(c^{\ast})^{q-\frac{3(q-p)}{p}} - c^{q-\frac{3(q-p)}{p}}\right].\\
\end{split}
\nonumber
\end{equation}
If $c > c^{\ast}$, choosing $t_{0}^{p} = \frac{2p_2a}{c^pp_1b}$ such that $\frac{a}{pp_1}c^{p}t^{p} = \frac{b}{2pp_2}c^{2p}t^{2p}$. Then,
$$
i(c) \leq I(Q_t) = \frac{c^{\frac{3(q-p)}{p}}t^{\frac{3(q-p)}{p}}}{p\left| Q \right|_{p}^{q-p}}\left[(c^{\ast})^{q-\frac{3(q-p)}{p}} - c^{q-\frac{3(q-p)}{p}}\right] < 0.
$$
If $0<c \leq c^{\ast}$, then for any $u \in S(c)$, by Lemma \ref{lem2.1} and (\ref{eq3.7}), we have
\begin{equation}
\begin{split}
I(u) & \geq \frac{a}{p}\left| \nabla u \right|_{p}^{p} + \frac{b}{2p}\left| \nabla u \right|_{p}^{2p} - \frac{c^{q-\frac{3(q-p)}{p}}}{p\left| Q \right|_{p}^{q-p}} \left| \nabla u \right|_{p}^{\frac{3(q-p)}{p}}\\
& \geq  \frac{1}{p\left| Q \right|_{p}^{q-p}} \left| \nabla u  \right|_{p}^{\frac{3(q-p)}{p}}\left[(c^{\ast})^{q-\frac{3(q-p)}{p}} - c^{q-\frac{3(q-p)}{p}}\right] \geq 0.\\
\end{split}
\nonumber
\end{equation}
Thus $i(c) \geq 0$, combining with $i(c) \leq 0$, we conclude that $i(c)=0$.

\end{proof}

\begin{lem}\label{lem3.2}
If $q=p+\frac{2p^2}{3}$, then $i(c) = -\infty$ when $c > \left(\frac{b\left| Q\right|_{p}^{\frac{2p^2}{3}}}{2}\right)^{\frac{3}{2p^2-3p}}$ and $i(c) = 0$ when $c \leq \left(\frac{b\left| Q\right|_{p}^{\frac{2p^2}{3}}}{2}\right)^{\frac{3}{2p^2-3p}}$
\end{lem}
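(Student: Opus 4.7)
The plan is to exploit the fact that at the doubly critical exponent $q = p + \frac{2p^2}{3}$ the Gagliardo–Nirenberg scaling exponent $\frac{3(q-p)}{p}$ coincides exactly with $2p$, so the potential term and the Kirchhoff term scale in the same way. Concretely, I will first record that $q-\frac{3(q-p)}{p} = \frac{2p^2-3p}{3}$ and $q-p = \frac{2p^2}{3}$, and then run the same two-sided argument used in Lemma \ref{lem3.1}: a Gagliardo–Nirenberg lower bound on $I|_{S(c)}$ for small $c$, and a test-function upper bound via the rescaling $Q_t$ for large $c$.

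For the subcritical threshold $c \leq c^{**} := \bigl(\tfrac{b|Q|_p^{2p^2/3}}{2}\bigr)^{\frac{3}{2p^2-3p}}$, I would apply Lemma \ref{lem2.1} to any $u \in S(c)$ to obtain
\begin{equation*}
I(u) \geq \frac{a}{p}|\nabla u|_p^{p} + \left(\frac{b}{2p} - \frac{c^{\frac{2p^2-3p}{3}}}{p|Q|_p^{\frac{2p^2}{3}}}\right)|\nabla u|_p^{2p}.
\end{equation*}
The parenthesized coefficient is exactly nonnegative under the condition $c \leq c^{**}$, hence $I(u) \geq 0$ on $S(c)$ and therefore $i(c) \geq 0$. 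The matching upper bound $i(c) \leq 0$ is already supplied by the scaling $u_t(x)=t^{3/p}u(tx)$ in (\ref{eq3.2}) (that argument did not use the restriction $q < p+\tfrac{2p^2}{3}$), so $i(c)=0$.

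For $c > c^{**}$, I would re-use the test family $Q_t(x) = \frac{c}{|Q|_p}t^{3/p}Q(tx) \in S(c)$ from (\ref{eq3.4}). Plugging into $I$ and using (\ref{eq3.3}) together with $\frac{3(q-p)}{p}=2p$ gives
\begin{equation*}
I(Q_t) = \frac{a}{p}(ct)^{p} + \frac{c^{2p}t^{2p}}{2p}\left(b - \frac{2c^{q-2p}}{|Q|_p^{q-p}}\right).
\end{equation*}
Because $c > c^{**}$ is precisely equivalent to $b - \tfrac{2c^{q-2p}}{|Q|_p^{q-p}} < 0$, the $t^{2p}$ term dominates the $t^{p}$ term as $t\to\infty$ and drives $I(Q_t)\to -\infty$; hence $i(c)=-\infty$.

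I do not expect any real obstacle here beyond correctly identifying the doubly-critical scaling: once $\tfrac{3(q-p)}{p}=2p$ is observed, the Kirchhoff term and the Gagliardo–Nirenberg estimate of the nonlinear term combine into a single $|\nabla u|_p^{2p}$ contribution whose sign is controlled exactly by comparison of $c$ with $c^{**}$, and the rest is bookkeeping identical in spirit to parts (3)–(4) of Lemma \ref{lem3.1}.
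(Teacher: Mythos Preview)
Your proposal is correct and follows essentially the same approach as the paper: both use the Gagliardo--Nirenberg lower bound (\ref{eq3.1}) to show $i(c)\geq 0$ when $c\leq c^{**}$ (combined with the scaling (\ref{eq3.2}) for $i(c)\leq 0$), and both use the test family $Q_t$ from (\ref{eq3.4}) to drive $I(Q_t)\to -\infty$ when $c>c^{**}$. The only differences are cosmetic---the order in which the two cases are treated and how the $|\nabla u|_p^{2p}$ coefficient is factored.
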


\begin{proof}
When $c > \left(\frac{b\left| Q\right|_{p}^{\frac{2p^2}{3}}}{2}\right)^{\frac{3}{2p^2-3p}}$, similar to (\ref{eq3.5}), we have
\begin{equation}
\begin{split}
I(Q_t) 
& = \frac{a}{p}(ct)^p+\frac{b}{2p}(ct)^{2p}-\frac{c^qt^p}{p\left| Q \right|_{p}^{q-p}}\\
& = \frac{a}{p}(ct)^p+\frac{b}{2p}(ct)^{2p}\left(1-\frac{2c^{\frac{2p^2-3p}{3}}}{b\left| Q \right|_{p}^{\frac{2p^2}{3}}}\right),\\
\end{split}
\nonumber
\end{equation}
Thus $I(Q_t) \rightarrow -\infty$ as $t \rightarrow \infty$, which implies $i(c) = -\infty$.
\par
When $c \leq \left(\frac{b\left| Q\right|_{p}^{\frac{2p^2}{3}}}{2}\right)^{\frac{3}{2p^2-3p}}$, for any $u \in S(c)$, by Lemma \ref{lem2.1}, we have
\begin{equation}
\begin{split}
I(u) & \geq \frac{a}{p}\left| \nabla u \right|_{p}^{p} + \frac{b}{2p}\left| \nabla u \right|_{p}^{2p} - \frac{c^{q-\frac{3(q-p)}{p}}}{p\left| Q \right|_{p}^{q-p}} \left| \nabla u \right|_{p}^{\frac{3(q-p)}{p}}\\
& = \frac{a}{p}\left| \nabla u \right|_{p}^{p} + \frac{b}{2p}\left| \nabla u \right|_{p}^{2p}\left(1-\frac{2c^{\frac{2p^2-3p}{3}}}{b\left| Q \right|_{p}^{\frac{2p^2}{3}}}\right) \geq 0,\\
\end{split}
\nonumber
\end{equation}
which implies that $i(c) \geq 0$. Since (\ref{eq3.2}) still holds when $q=p+\frac{2p^2}{3}$, we have $i(c) \leq 0$ for any $c>0$. So we conclude that $i(c)=0$.

\end{proof}

\begin{lem}\label{lem3.3} For each $p < q <p+\frac{2p^2}{3}$, the function $c \mapsto i(c)$ is continuous on $(0,+\infty)$.
\end{lem}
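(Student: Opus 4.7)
The plan is to establish both upper and lower semicontinuity of $i$ at an arbitrary $c_0 \in (0,+\infty)$ via the simple rescaling $u \mapsto \frac{c}{c_0} u$, which is a bijection $S(c_0) \to S(c)$. The advantage of this rescaling is that the three terms of $I$ pick up pure powers of $c/c_0$: if $v = \frac{c}{c_0} u$, then $|\nabla v|_p^p = (c/c_0)^p |\nabla u|_p^p$, $|\nabla v|_p^{2p} = (c/c_0)^{2p} |\nabla u|_p^{2p}$, and $|v|_q^q = (c/c_0)^q |u|_q^q$, so all three factors tend to $1$ as $c \to c_0$.

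For \emph{upper semicontinuity}, fix $\varepsilon > 0$ and pick $u \in S(c_0)$ with $I(u) \leq i(c_0) + \varepsilon$. For any sequence $c_n \to c_0$ set $v_n := \frac{c_n}{c_0} u \in S(c_n)$. By the scaling above, $I(v_n) \to I(u)$, so $i(c_n) \leq I(v_n) \to I(u) \leq i(c_0)+\varepsilon$, and letting $\varepsilon \to 0$ gives $\limsup_{n} i(c_n) \leq i(c_0)$.

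For \emph{lower semicontinuity}, take $c_n \to c_0$ and, for each $n$, choose $u_n \in S(c_n)$ with $I(u_n) \leq i(c_n) + 1/n$. Since $i(c_n) \leq 0$ (Lemma~\ref{lem3.1}(1)), we have $I(u_n) \leq 1$ for all large $n$. The Gagliardo--Nirenberg estimate (\ref{eq3.1}) then reads
$$
1 \geq I(u_n) \geq \frac{a}{p}|\nabla u_n|_p^{p} + \frac{b}{2p}|\nabla u_n|_p^{2p} - \frac{c_n^{\,q-\frac{3(q-p)}{p}}}{p|Q|_p^{q-p}}|\nabla u_n|_p^{\frac{3(q-p)}{p}},
$$
and since $\frac{3(q-p)}{p} < 2p$ (because $q < p+\frac{2p^2}{3}$) and $\{c_n\}$ is bounded, the quadratic-in-$|\nabla u_n|_p^p$ leading term forces $\{|\nabla u_n|_p\}$ to be bounded; then by Lemma~\ref{lem2.1} so is $\{|u_n|_q\}$. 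Now scale back to $\tilde u_n := \frac{c_0}{c_n} u_n \in S(c_0)$ and compute
$$
I(\tilde u_n) - I(u_n) = \frac{a}{p}\!\Bigl[\bigl(\tfrac{c_0}{c_n}\bigr)^{p}\!-1\Bigr]|\nabla u_n|_p^{p} + \frac{b}{2p}\!\Bigl[\bigl(\tfrac{c_0}{c_n}\bigr)^{2p}\!-1\Bigr]|\nabla u_n|_p^{2p} - \frac{1}{q}\!\Bigl[\bigl(\tfrac{c_0}{c_n}\bigr)^{q}\!-1\Bigr]|u_n|_q^{q}.
$$
Each bracket tends to $0$ and each of the three norms is bounded, so $I(\tilde u_n) - I(u_n) \to 0$. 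Hence $i(c_0) \leq I(\tilde u_n) = I(u_n) + o(1) \leq i(c_n) + 1/n + o(1)$, which yields $\liminf_n i(c_n) \geq i(c_0)$.

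The only point that requires care is the uniform $W^{1,p}$-boundedness of near-minimizing sequences as $c_n$ varies, but this is essentially free from the coercive lower bound together with the condition $q < p+\frac{2p^2}{3}$; the same argument covers the thresholds $c = a^{3/p^2}|Q|_p$ in case (3) and $c = c^{\ast}$ in case (4) of Lemma~\ref{lem3.1}, where $i$ transitions from identically $0$ to strictly negative, since both one-sided limits are controlled by the same semicontinuity bounds.
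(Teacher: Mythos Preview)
Your proof is correct and follows the standard scaling approach that the paper implicitly invokes by referring to \cite{baldelli2022normalized}: the paper does not spell out a proof but simply cites Lemma~\ref{lem3.1}(1) and the analogous result in that reference, whose argument is precisely the $u\mapsto (c/c_0)u$ rescaling you carry out. Your two halves---upper semicontinuity via scaling a fixed near-minimizer, lower semicontinuity via uniform coercivity from (\ref{eq3.1}) together with $\tfrac{3(q-p)}{p}<2p$---are exactly what is needed, and the use of $i(c_n)\le 0$ from Lemma~\ref{lem3.1}(1) to cap $I(u_n)$ matches the paper's own hint.
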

\begin{proof}
The proof follows from Lemma \ref{lem3.1} (1) and is similar to the Lemma 3.3 of \cite{baldelli2022normalized}, so we omit it.
\end{proof}

\begin{lem}\label{lem3.4} For each $p < q <p+\frac{2p^2}{3}$, if $c$ satisfies the following condition:
\begin{equation}\label{eq3.8}
\begin{cases}
	c>0, & \text{when} \quad p < q <p+\frac{p^2}{3},\\
    c>a^{\frac{3}{p^2}}\left| Q \right|_p, & \text{when} \quad q=p+\frac{p^2}{3},\\
	c>c^{\ast},&	\text{when} \quad p+\frac{p^2}{3} < q <p+\frac{2p^2}{3},\\
\end{cases}
\end{equation}
where $c^{\ast}$ is defined in Lemma \ref{lem3.1}, then
$$
i(c) < i(\alpha) + i(\sqrt[p]{c^p-\alpha^p}), \enspace \text{for} \enspace \text{any} \enspace 0<\alpha < c.
$$
\end{lem}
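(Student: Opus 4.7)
The plan is to deduce strict subadditivity from the scaling-type strict monotonicity
\[
\frac{i(c_2)}{c_2^p}<\frac{i(c_1)}{c_1^p} \quad\text{whenever}\quad 0<c_1<c_2\text{ and } i(c_1),i(c_2)<0,
\]
and then to handle the remaining cases in which $\alpha$ or $\beta:=(c^p-\alpha^p)^{1/p}$ lies below the threshold at which $i(\cdot)$ vanishes.

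To establish the monotonicity I test $i(c_2)$ against the pure spatial rescaling $v(x):=u(\mu x)$ with $\mu=(c_1/c_2)^{p/3}\in(0,1)$, where $u\in S(c_1)$ is a near-minimizer for $i(c_1)$. Since $v\in S(c_2)$, a direct change of variables writes $I(v)-(c_2/c_1)^pI(u)$ as the sum of two corrections proportional to $\bigl[(c_2/c_1)^{p(3-p)/3}-(c_2/c_1)^p\bigr]|\nabla u|_p^p$ and $\bigl[(c_2/c_1)^{2p(3-p)/3}-(c_2/c_1)^p\bigr]|\nabla u|_p^{2p}$. Both brackets are strictly negative: the first because $p(3-p)/3<p$ trivially, and the second because $2p(3-p)/3<p$ is precisely equivalent to $p>\tfrac{3}{2}$, the standing assumption. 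Hence $I(v)<(c_2/c_1)^pI(u)$, with a gap controlled from below by a positive multiple of $|\nabla u|_p^p$. On the sublevel set $\{u\in S(c_1):I(u)\le i(c_1)/2\}$, which is non-empty because $i(c_1)<0$, the Gagliardo-Nirenberg inequality (\ref{eq2.1}) combined with the sublevel bound forces $|\nabla u|_p^p\ge\delta>0$ uniformly (otherwise $|u|_q^q\to 0$ would force $I(u)\to 0$, contradicting the sublevel bound). Passing to the limit along a minimizing sequence for $i(c_1)$ therefore yields the claimed strict monotonicity with a uniform positive gap.

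With the monotonicity in hand, let $T\ge 0$ be the threshold in (\ref{eq3.8}), namely $T=0$, $a^{3/p^2}|Q|_p$, or $c^\ast$ in the three sub-cases, so that by Lemma \ref{lem3.1} one has $i\equiv 0$ on $(0,T]$ and $i<0$ on $(T,\infty)$. I split on whether $\alpha,\beta$ lie above $T$. If both are at most $T$, the claim reduces to $i(c)<0=i(\alpha)+i(\beta)$, immediate from $c>T$. If only one, say $\alpha$, exceeds $T$, then $i(\beta)=0$ and the monotonicity applied to $(\alpha,c)$ gives $i(c)<(c/\alpha)^pi(\alpha)<i(\alpha)=i(\alpha)+i(\beta)$, since $(c/\alpha)^p>1$ and $i(\alpha)<0$. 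If both exceed $T$, the monotonicity applied separately to $(\alpha,c)$ and to $(\beta,c)$ yields $i(\alpha)>\alpha^pi(c)/c^p$ and $i(\beta)>\beta^pi(c)/c^p$; adding and using $\alpha^p+\beta^p=c^p$ produces $i(\alpha)+i(\beta)>i(c)$. The main obstacle is the sign analysis in the monotonicity step: the hypothesis $p>\tfrac{3}{2}$ enters precisely to make both scaling exponents strictly less than $p$, so that both correction terms are negative simultaneously, which is the structural reason the nonlocal Kirchhoff term $b|\nabla u|_p^{2p}$ does not destroy the argument.
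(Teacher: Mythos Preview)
Your argument is correct and follows essentially the same route as the paper's proof: both use the pure spatial dilation $u\mapsto u(\mu x)$ with $\mu=(c_1/c_2)^{p/3}$ to obtain the scaling inequality $i(c_2)<(c_2/c_1)^p i(c_1)$ whenever $i(c_1)<0$, with strictness coming from a uniform lower bound on $|\nabla u|_p^p$ along near-minimizing sequences forced by $i(c_1)<0$ and Gagliardo--Nirenberg. The only difference is organizational: the paper states the scaling claim (\ref{eq3.9}) once and applies it formally at the base points $\alpha$ and $\beta$, glossing over the threshold cases where $i(\alpha)=0$ or $i(\beta)=0$, whereas you make the case split $\alpha,\beta\gtrless T$ explicit, which is cleaner and closes that small expository gap.
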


\begin{proof}
We first claim that
\begin{equation}\label{eq3.9}
i(\theta c) < \theta^{p}i(c), \enspace \text{for} \enspace \text{any} \enspace \theta > 1.
\end{equation}
By Lemma \ref{lem3.1}, we have $i(c) < 0$ when $c$ satisfies (\ref{eq3.8}). Let $\{u_n\} \subset S(c)$ be a minimizing sequence for $i(c)$. For any $\theta > 1$, set $u_{n}^{\theta} = u(\theta^{-\frac{p}{3}}x)$, then $u_{n}^{\theta} \in S(\theta c)$ and

\begin{equation}
\begin{split}
I(u_{n}^{\theta}) & = \theta^p I(u_n) + \left(\theta^{p-\frac{p^2}{3}}-\theta^p\right)\frac{a}{p}\left| \nabla u_n \right|_{p}^{p} + \left(\theta^{2p-\frac{2p^2}{3}}-\theta^p\right)\frac{b}{2p}\left| \nabla u_n \right|_{2p}^{p} \\
& \leq \theta^p I(u_n).\\
\end{split}
\nonumber
\end{equation}
Let $n \rightarrow \infty$, we have $i(\theta c) \leq \theta^{p}i(c)$, with equality if and only if $\left| \nabla u_n \right|_{p}^{p} \rightarrow 0$ as $n \rightarrow \infty$. By (\ref{eq2.1}), we have $\left| u_n \right|_{q}^{q} \rightarrow 0$ when $\left| \nabla u_n \right|_{p}^{p} \rightarrow 0$ as $n \rightarrow \infty$. Then we obtain $I(u_n) \rightarrow 0$ as $n \rightarrow \infty$ which contradicts $i(c) < 0$. Thus, we have $i(\theta c) < \theta^{p}i(c)$. For any $0<\alpha<c$, apply (\ref{eq3.9}) with $\theta = \frac{c}{\alpha} > 1$ and $\theta = \frac{c}{\sqrt[p]{c^p-\alpha^p}} > 1$ respectively, we get
$$
i(c) = \frac{\alpha^p}{c^p}i\left(\frac{c}{\alpha}\alpha\right) + \frac{c^p-\alpha^p}{c^p}i\left(\frac{c}{\sqrt[p]{c^p-\alpha^p}}\sqrt[p]{c^p-\alpha^p}\right) < i(\alpha)+i(\sqrt[p]{c^p-\alpha^p}).
$$
\end{proof}

\begin{lem}\label{lem3.5}
For each $p < q <p+\frac{2p^2}{3}$ and any $c > 0$, let $\{ w_n \} \subset S(c)$ be a minimizing sequence for $i(c)$, then $i(c)$ possesses another minimizing sequence $\{ v_n \} \subset S(c)$ such that
\begin{equation}
\Vert v_n - w_n \Vert_ {W^{1,p}(\mathbb{R}^3)} \rightarrow 0, (I|_{S(c)})^{\prime}(v_n) \rightarrow 0 \quad \text{as} \enspace n \rightarrow \infty.
\nonumber
\end{equation}
\end{lem}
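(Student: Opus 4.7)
The plan is to apply Ekeland's variational principle on the constraint set $S(c)$, viewed as a complete metric space in the induced $W^{1,p}$-norm. First I would note that $S(c) = G^{-1}(0)$ where $G(u) := |u|_p^p - c^p$ is a $C^1$-functional on $W^{1,p}(\mathbb{R}^3)$ with derivative $\langle G'(u),\varphi\rangle = p\int_{\mathbb{R}^3}|u|^{p-2}u\,\varphi\,dx$, which is nonzero whenever $u \in S(c)$. Hence $S(c)$ is a $C^1$-Banach submanifold of $W^{1,p}(\mathbb{R}^3)$; it is closed in $W^{1,p}(\mathbb{R}^3)$ (as the preimage of a closed set by a continuous map), so it inherits a complete metric. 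Moreover, by Lemma \ref{lem3.1}(1), in the range $p<q<p+\frac{2p^2}{3}$ the functional $I$ is bounded from below on $S(c)$, so $i(c)\in\mathbb{R}$ and the Ekeland machinery applies to $I|_{S(c)}$.

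Next I would set $\varepsilon_n := I(w_n) - i(c) + \frac{1}{n^2} \to 0^+$ and apply Ekeland's variational principle to $I|_{S(c)}$ with parameters $(\varepsilon_n, \sqrt{\varepsilon_n})$. This produces $v_n \in S(c)$ such that (a) $I(v_n) \le I(w_n)$, (b) $\Vert v_n - w_n\Vert_{W^{1,p}(\mathbb{R}^3)} \le \sqrt{\varepsilon_n}$, and (c) $I(w) \ge I(v_n) - \sqrt{\varepsilon_n}\,\Vert w - v_n\Vert_{W^{1,p}(\mathbb{R}^3)}$ for every $w\in S(c)$. Combining (a) with $I(w_n)\to i(c)$ shows that $\{v_n\}$ is itself a minimizing sequence for $i(c)$, while (b) gives $\Vert v_n - w_n\Vert_{W^{1,p}(\mathbb{R}^3)}\to 0$.

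To extract the derivative statement from the variational inequality (c), I would pick an arbitrary $\varphi \in W^{1,p}(\mathbb{R}^3)$ belonging to the tangent space $T_{v_n}S(c) = \{\varphi : \int_{\mathbb{R}^3}|v_n|^{p-2}v_n\,\varphi\,dx = 0\}$ and construct a $C^1$-curve $\gamma:(-\delta,\delta)\to S(c)$ with $\gamma(0)=v_n$ and $\gamma'(0)=\varphi$, using a standard normal-retraction/implicit-function-theorem argument on the $C^1$-submanifold $S(c)$. Substituting $w=\gamma(t)$ in (c), dividing by $|t|$, and letting $t\to 0^\pm$ yields $|\langle I'(v_n),\varphi\rangle|\le \sqrt{\varepsilon_n}\,\Vert\varphi\Vert_{W^{1,p}(\mathbb{R}^3)}$, so $(I|_{S(c)})'(v_n)\to 0$ in the tangential dual sense. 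Equivalently, via Lemma \ref{lem2.4}, one can write this as $I'(v_n) - c^{-p}\langle I'(v_n),v_n\rangle|v_n|^{p-2}v_n \to 0$ in $(W^{1,p}(\mathbb{R}^3))'$.

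The only non-routine step is the tangent-curve construction $\gamma$, and this is a standard consequence of the implicit function theorem applied to $G$ near $v_n$: one can, for instance, take $\gamma(t) := \bigl(v_n + t\varphi\bigr)/\bigl(c^{-1}|v_n + t\varphi|_p\bigr)$, which is $C^1$ in $t$ since $u\mapsto |u|_p$ is $C^1$ away from $0$ (for $p>1$), satisfies $\gamma(0)=v_n$, and has $\gamma'(0)=\varphi$ exactly when $\varphi\in T_{v_n}S(c)$. Everything else in the argument is entirely routine.
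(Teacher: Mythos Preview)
Your proposal is correct and follows the same approach as the paper: both invoke Ekeland's variational principle on the complete metric space $S(c)$, using Lemma~\ref{lem3.1}(1) to guarantee that $I$ is bounded below. The paper simply cites \cite[Theorem~2.4]{willem2012minimax} for this step, whereas you spell out the details (the $C^1$-manifold structure of $S(c)$, the choice of Ekeland parameters, and the tangent-curve construction), but the underlying argument is identical.
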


\begin{proof}
When $p < q <p+\frac{2p^2}{3}$, by Lemma 3.1 (1), we know that $I$ is bounded from below on $S(c)$. By Ekeland's variational principle(\cite{willem2012minimax}, Theorem 2.4), we can get a new minimizing sequence $\{ v_n \} \subset S(c)$ for $i(c)$ such that $\Vert v_n - w_n \Vert_ {W^{1,p}(\mathbb{R}^3)} \rightarrow 0$, which is also a Palais-Smale sequence for $I|_{S(c)}$.
\end{proof}

\begin{proof}[Proof of Theorem \ref{thm1.1}]
We complete the proof in four parts.
\par
At fist, we study the case when $p<q<p+\frac{p^2}{3}$:
\par
(1) For $p<q<p+\frac{p^2}{3}$, by Lemma \ref{lem3.1}, we have $i(c) < 0$ for any $c > 0$. Let $\{ w_n \} \subset S(c)$ be a minimizing sequence for $i(c)$. By Lemma \ref{lem3.5}, there exists a minimizing sequence $\{ v_n \} \subset S(c)$ for $i(c)$ such that
\begin{equation}\label{eq3.10}
 (I|_{S(c)})^{\prime}(v_n) \rightarrow 0 \quad \text{as} \enspace n \rightarrow \infty.
\end{equation}
Since $i(c) < 0$, it can be obtained that $\{ v_n \}$ is bounded in $W^{1,p}(\mathbb{R}^3)$. In fact, for $n$ sufficiently large, by Lemma \ref{lem2.1}, we have
\begin{equation}
\begin{split}
0 \geq I(v_n) &\geq \frac{a}{p}\left| \nabla v_n \right|_{p}^{p} + \frac{b}{2p}\left| \nabla v_n \right|_{p}^{2p} - \frac{c^{q-\frac{3(q-p)}{p}}}{p\left| Q \right|_{p}^{q-p}} \left| \nabla v_n \right|_{p}^{\frac{3(q-p)}{p}}\\
& \geq \frac{a}{p}\left| \nabla v_n \right|_{p}^{p} - \frac{c^{q-\frac{3(q-p)}{p}}}{p\left| Q \right|_{p}^{q-p}} \left| \nabla v_n \right|_{p}^{\frac{3(q-p)}{p}},\\
\end{split}
\nonumber
\end{equation}
which indicates $\left| \nabla v_n \right|_{p}$ is bounded in $W^{1,p}(\mathbb{R}^3)$ since there is $\frac{3(q-p)}{p} < p$ when $p<q<p+\frac{p^2}{3}$.
\\
Set
$$
\sigma := \underset{n \rightarrow +\infty}{\text{lim inf}} \underset{y \in \mathbb{R}^3}{\text{ sup}} \int_{B_1(y)}{|v_n|^pdx}.
$$
If $\sigma = 0$, then by vanishing lemma (see\cite{willem2012minimax}, Lemma 1.21), $u_n \rightarrow 0$ in $L^s(\mathbb{R}^3)$ for any $p<s<p^{\ast}$. Hence, $0 \leq \underset{n \rightarrow +\infty}{\text{lim}}\left(\frac{a}{p}\left| \nabla v_n \right|_{p}^{p} + \frac{b}{2p}\left| \nabla v_n \right|_{p}^{2p}\right) = i(c) < 0$, which is impossible. Therefore, we must have $\sigma > 0$, and then, there exists a sequence $\{ 
y_n \} \subset \mathbb{R}^3$ such that
$$
\int_{B_1(y_n)}{|v_n|^pdx} > 0.
$$
Set $u_n(\cdot) := v_n(\cdot + y_n)$, and then, $\{ u_n \}$ is still a bounded Palais-Smale sequence for $I|_{S(c)}$ on the level $i(c)$. We may assume that for some $u \in W^{1,p}(\mathbb{R}^3)$,
\begin{equation}\label{eq3.11}
\begin{cases}
u_n \rightharpoonup u \ne 0, & \text{in} \enspace W^{1,p}(\mathbb{R}^3),\\
u_n \rightarrow u, & \text{in} \enspace L_{loc}^{s}(\mathbb{R}^3),s \in [p,p^{\ast}),\\
u_n(x) \rightarrow u(x), & \text{a.e. in} \enspace \mathbb{R}^3.\\
\end{cases}
\end{equation}
Hence, $\alpha := |u|_{p} \in (0,c]$. Next, we try to prove  $u \in S(c)$ and suppose to the contrary that $\alpha \in (0,c)$.
\par
At first, we verify that, up to subsequence, $\nabla u_n(x) \rightarrow \nabla u(x)$ a.e. $x \in \mathbb{R}^3$. \\
We follow some ideas of \cite{du2023quasilinear} and \cite{luo2017existence}. By (\ref{eq3.10}) and Lemma \ref{lem2.4}, we have
\begin{equation}\label{eq3.12}
I^{\prime}(u_n) - \lambda_n \left| u_n \right|^{p-2}u_n 
\rightarrow 0 \quad \text{in} \quad \left(W^{1,p}(\mathbb{R}^3)\right)^{\prime} \quad as \quad n \rightarrow \infty. 
\end{equation}
where $\lambda_n = \frac{1}{c^p}\left< I^{\prime}(u_n) , u_n \right>$. Since $\{ u_n \}$ is bounded in $W^{1,p}(\mathbb{R}^3)$, we suppose that, up to subsequence,
\begin{equation}\label{eq3.13}
\underset{n \rightarrow +\infty}{\text{lim}}|\nabla u_n|_{p}^{p} = A \geq 0, \underset{n \rightarrow +\infty}{\text{lim}}\lambda_n = \lambda.
\end{equation}
Hence, by (\ref{eq3.12}), we have
\begin{equation}\label{eq3.14}
J(u_n) := -(a+bA)\varDelta_pu_n - |u_n|^{q-2}u_n - \lambda \left| u_n \right|^{p-2}u_n \rightarrow 0
\end{equation}
in $(W^{1,p}(\mathbb{R}^3))^{\prime}$ as $n \rightarrow \infty$.
Meanwhile, it is easy to see that
\begin{equation}\label{eq3.15}
J(u) = -(a+bA)\varDelta_pu - |u|^{q-2}u - \lambda \left| u \right|^{p-2}u \in \left(W^{1,p}(\mathbb{R}^3)\right)^{\prime}.
\end{equation}
Let $\eta \in C_{0}^{\infty}(\mathbb{R}^3,[0,1])$ satisfy $\eta|_{B_R} = 1$ and supp $\eta \subset B_{2R}$, where $B_R = \{ x\in \mathbb{R}^3 : |x| \leq R \}$. From (\ref{eq3.14}), (\ref{eq3.15}) and $(u_n - u)\eta \rightharpoonup 0$ in $W^{1,p}(\mathbb{R}^3)$, we obtain
\begin{equation}\label{eq3.16}
(J(u_n)-J(u))[(u_n - u)\eta] \rightarrow 0.
\end{equation}
By H\"{o}lder inequality and (\ref{eq3.11}), we conclude that
\begin{equation}\label{eq3.17}
\int_{\mathbb{R}^3}{(|u_n|^{s-2}u_n-|u|^{s-2}u)(u_n-u)\eta dx} = o_n(1), \forall p \leq s < p^{\ast},
\end{equation}
\begin{equation}\label{eq3.18}
\int_{\mathbb{R}^3}{(|\nabla u_n|^{p-2}\nabla u_n-|\nabla u|^{p-2}\nabla u)\nabla \eta(u_n-u) dx} = o_n(1).
\end{equation}
By (\ref{eq3.16})-(\ref{eq3.18}), we conclude that
$$
\int_{\mathbb{R}^3}{(|\nabla u_n|^{p-2}\nabla u_n-|\nabla u|^{p-2}\nabla u)\nabla(u_n-u)\eta dx} = o_n(1).
$$
Using (\ref{eq2.5}) and (\ref{eq2.6}), we infer that
$$
\underset{n \rightarrow \infty}{\text{lim}}\int_{B_{2R}}{|\nabla u_n - \nabla u|^pdx} = 0.
$$
Taking into account of the arbitrariness of $B_{2R}$, we conclude that
$$
\nabla u_n(x) \rightarrow \nabla u(x) \enspace a.e. \enspace x \in \mathbb{R}^3.
$$
Then, by the Brezis-Lieb Lemma (see\cite{willem2012minimax}, Lemma 1.32), we have
\begin{equation}\label{eq3.19}
|\nabla u_n |_{p}^{p} = |\nabla u_n - \nabla u |_{p}^{p} + |\nabla u |_{p}^{p} + o_n(1),
| u_n |_{p}^{p} = | u_n - u |_{p}^{p} + | u |_{p}^{p} + o_n(1).
\end{equation}
By (\ref{eq3.19}), it can be obtained that
$$
i(c) = \underset{n \rightarrow +\infty}{\text{lim}}I(u_n) \geq I(u) + \underset{n \rightarrow +\infty}{\text{lim}} I(u_n-u) \geq i(\alpha) + i(\sqrt[p]{c^p-\alpha^p}),
$$
which contradicts to Lemma \ref{lem3.4}. So, $|u|_{p} = c$ and $u_n \rightarrow u$ in $L^p(\mathbb{R}^3)$. We conclude from Lemma \ref{lem2.1} that $u_n \rightarrow u$ in $L^q(\mathbb{R}^3)$. Then, we have
$$
i(c) \leq I(u) \leq \underset{n \rightarrow +\infty}{\text{lim}} I(u_n) = i(c).
$$
Thus, $u \in S(c)$ is a minimizer of $i(c)$, and then, $u$ is a critical point of $I|_{S(c)}$. Therefore, by Lemma \ref{lem2.2}, there exists $\lambda < 0$ such that $(u,\lambda)$ is a couple of solution to (\ref{eq1.1}) and (\ref{eq1.2}).
\par
Secondly, we study the case when $q = p+\frac{p^2}{3}$:
\par
(2) For $q = p+\frac{p^2}{3}$, when $c> a^{\frac{3}{p^2}}\left| Q \right|_p$, we have $i(c) < 0$. Similar to the case when $p<q < p+\frac{p^2}{3}$, we can get a minimizer for $i(c)$ and thus get a couple of solution $(u, \lambda)$ to (\ref{eq1.1}) and (\ref{eq1.2}).
\par
When $0< c \leq a^{\frac{3}{p^2}}\left| Q \right|_p$, suppose that there exists a minimizer $u \in S(c)$ such that $I(u)=i(c)=0$, then
$$
\frac{a}{p}\left| \nabla u \right|_{p}^{p} + \frac{b}{2p}\left| \nabla u \right|_{p}^{2p} = \frac{1}{q}|u|_{q}^{q}
\leq
\frac{c^{\frac{p^2}{3}}}{p\left| Q \right|_{p}^{\frac{p^2}{3}}} \left| \nabla u \right|_{p}^{p}
\leq
\frac{a}{p}\left| \nabla u \right|_{p}^{p},
$$
which implies that $\left| \nabla u \right|_{p}^{2p}=0$. Hence $u=0$, which is impossible.
\par
Next, we study the case when $p+\frac{p^2}{3}<q < p+\frac{2p^2}{3}$:
\par
(3) When $c > c^{\ast}$, we have $i(c) < 0$. Similar to the case when $p<q < p+\frac{p^2}{3}$, we can get a minimizer for $i(c)$ and thus get a couple of solution $(u, \lambda)$ to (\ref{eq1.1}) and (\ref{eq1.2}).
\par
When $c < c^{\ast}$, suppose that there exists $u \in S(c)$ such that $I(u) = i(c) = 0$. Similar to the proof of Lemma \ref{lem3.4}, choose $\theta = \frac{c^{\ast}}{c}> 1$ and set $u^{\theta} = u(\theta^{-\frac{p}{3}}x)$. Then $u^{\theta} \in S(\theta c)$ and
$$
i(c^{\ast}) \leq I(u^{\theta}) < \theta^p I(u) = 0,
$$
which contradicts that $i(c^{\ast}) = 0$.
\par
When $c = c^{\ast}$, set $c_n = c^{\ast} + \frac{1}{n}$. For each $n$, there exsists $\{v_n \} \subset S(c)$ such that
\begin{equation}\label{eq3.20}
I(v_n) = i(c_n) < 0.
\end{equation}
By Lemma \ref{lem2.1}, we can deduce that $\{v_n \}$ is bounded in $W^{1,p}(\mathbb{R}^3)$. Moreover, by Lemma \ref{lem3.3}, we see that
$$
I(v_n) \rightarrow i(c^{\ast}) = 0.
$$
If $|v_n|_{q}^{q} \rightarrow 0$, then $\frac{a}{p}\left| \nabla v_n \right|_{p}^{p} + \frac{b}{2p}\left| \nabla v_n \right|_{p}^{2p} \rightarrow 0$. Hence, $\left| \nabla v_n \right|_{p}^{p} \rightarrow 0$, by Lemma \ref{lem2.1} we see that $I(v_n) \geq 0$ for $n$ large enough which contradicts to (\ref{eq3.20}). Therefore, similar to part (1), there exists $\{y_n\} \subset \mathbb{R}^3$ and $u \in W^{1,p}(\mathbb{R}^3)$such that
\begin{equation}
\begin{cases}
u_n \rightharpoonup u \ne 0, & \text{in} \enspace W^{1,p}(\mathbb{R}^3),\\
u_n \rightarrow u, & \text{in} \enspace L_{loc}^{s}(\mathbb{R}^3),s \in [p,p^{\ast}),\\
u_n(x) \rightarrow u(x), & \text{a.e. in} \enspace \mathbb{R}^3.\\
\end{cases}
\nonumber
\end{equation}
where $u_n(\cdot) := v_n(\cdot+y_n)$.
\par
We claim that $u \in S(c^{\ast})$ and suppose to the contrary that $|u|_{p} = \alpha \in (0,c^{\ast})$.
For each $n$, note that $I(u_n) = I(v_n) = i(c_n) < 0$, hence $u_n$ is also a minimizer for $i(c_n)$ and $\{u_n\}$ is bounded in $W^{1,p}(\mathbb{R}^3)$. By the Lagrange multipliers rule(\cite{chang2005methods}, Corollary 4.1.2), there exists $\{ \lambda _n \} \subset \mathbb{R}^3$ such that
$$
I^{\prime}(u_n) - \lambda_n \left| u_n \right|^{p-2}u_n = 0 \quad \text{in} \quad \left(W^{1,p}(\mathbb{R}^3)\right)^{\prime}.
$$
Obviously, $\{\lambda_n\}$ is bounded in $\mathbb{R}$ since $\{u_n\}$ is bounded in $W^{1,p}(\mathbb{R}^3)$.
Similar to (\ref{eq3.13}), suppose that, up to subsequence,
$$
\underset{n \rightarrow +\infty}{\text{lim}}|\nabla u_n|_{p}^{p} = A \geq 0, \underset{n \rightarrow +\infty}{\text{lim}}\lambda_n = \lambda.
$$
By a similar arugment in part (1), we obtain that $\nabla u_n(x) \rightarrow \nabla u(x)$ a.e. on $x \in \mathbb{R}^3$. Thus by the Brezis-Lieb Lemma, we have
$$
|\nabla u_n |_{p}^{p} = |\nabla u_n - \nabla u |_{p}^{p} + |\nabla u |_{p}^{p} + o_n(1),
| u_n |_{p}^{p} = | u_n - u |_{p}^{p} + | u |_{p}^{p} + o_n(1),
$$
moreover,
$$
0 = \underset{n \rightarrow +\infty}{\text{lim}}I(u_n) \geq I(u) + \underset{n \rightarrow +\infty}{\text{lim}}I(u_n - u).
$$
By Lemma \ref{lem3.1}, $i(\alpha) = i(\sqrt[p]{(c^{\ast})^p-\alpha^p}) = 0$. Because $\underset{n \rightarrow \infty}{\text{lim}}| u_n - u |_{p}^{p} =(c^{\ast})^p-\alpha^p$, we have
$$
\underset{n \rightarrow \infty}{\text{lim}}I(u_n-u) \geq i(\sqrt[p]{(c^{\ast})^p-\alpha^p}) =0,
$$
which implies $0 = I(u_n) \geq I(u) + I(u_n - u) \geq I(u) \geq i(\alpha) = 0 $. Hence $u$ is a minimizer of $i(\alpha)$, but, by Lemma \ref{lem3.1}, there is no minimizer for $\alpha \in (0,c^{\ast})$ . So we have $u \in S(c^{\ast})$ and $u_n \rightarrow u$ in $L^{p}(\mathbb{R}^3)$. Meanwhile, by Lemma \ref{lem2.1}, $u_n \rightarrow u$ in $L^{q}(\mathbb{R}^3)$. So we have
$$
0 = \underset{n \rightarrow \infty}{\text{lim}}I(u_n) \geq I(u) \geq i(c^{\ast}) = 0.
$$
We conclude that $u$ is a minimizer for $i(c^{\ast})$. Similar to part (1), there exists a $\lambda < 0$ such that $(u,\lambda)$ is a couple of solution to (\ref{eq1.1})-(\ref{eq1.2}).
\par
Finally, we study the case when $q = p+\frac{2p^2}{3}$:
\par
(4) When $c > \left(\frac{b\left| Q\right|_{p}^{\frac{2p^2}{3}}}{2}\right)^{\frac{3}{2p^2-3p}}$, by Lemma \ref{lem3.2}, we have $i(c) = -\infty$ so that $i(c)$ has no minimizer.
\par
When  $c \leq \left(\frac{b\left| Q\right|_{p}^{\frac{2p^2}{3}}}{2}\right)^{\frac{3}{2p^2-3p}}$, suppose that there exists a $u \in S(c)$ such that $I(u) = i(c) = 0$. Then
$$
\frac{a}{p}\left| \nabla u \right|_{p}^{p} + \frac{b}{2p}\left| \nabla u \right|_{p}^{2p} = \frac{1}{q}|u|_{q}^{q}
\leq \frac{c^{q-\frac{3(q-p)}{p}}}{p\left| Q \right|_{p}^{q-p}} \left| \nabla u \right|_{p}^{\frac{3(q-p)}{p}}
\leq \frac{b}{2p}\left| \nabla u \right|_{p}^{2p}.
$$
So $\left| \nabla u \right|_{p}^{p} = 0$, which is impossible.

\end{proof}

\section{Proof of Theorem 1.3}
In this section, we consider the minimization problem (\ref{eq1.7}) in the case $p+\frac{2p^2}{3} < q <p^{\ast}$.

\begin{lem}\label{lem4.1} Let $p+\frac{2p^2}{3} < q <p^{\ast}$, then $I$ is not bounded from below on $S_r(c)$ for any $c > 0$.
\end{lem}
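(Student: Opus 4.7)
The plan is a direct scaling argument. Pick any fixed nonzero radial test function $u \in S_r(c)$ (for instance, a smooth compactly supported radial bump rescaled to have $L^p$-norm equal to $c$), and define the $L^p$-norm preserving dilation $u_t(x) := t^{3/p} u(tx)$ for $t > 0$. Since radial symmetry is preserved by dilation, $u_t \in S_r(c)$ for every $t > 0$.

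Next I would compute how each term of $I$ scales. A change of variables gives
\begin{equation}
|\nabla u_t|_p^p = t^p |\nabla u|_p^p, \qquad |u_t|_q^q = t^{3(q-p)/p} |u|_q^q, \nonumber
\end{equation}
so that
\begin{equation}
I(u_t) = \frac{a}{p} t^p |\nabla u|_p^p + \frac{b}{2p} t^{2p} |\nabla u|_p^{2p} - \frac{t^{3(q-p)/p}}{q} |u|_q^q. \nonumber
\end{equation}
The hypothesis $q > p + \frac{2p^2}{3}$ is exactly $\frac{3(q-p)}{p} > 2p$, so the exponent of $t$ on the (negative) nonlinear term strictly exceeds the exponents on both positive terms. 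Since $|u|_q > 0$, letting $t \to +\infty$ forces $I(u_t) \to -\infty$, which proves the claim.

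There is no real obstacle here; the only thing to verify carefully is the exponent comparison, which is immediate from the assumption $p+\frac{2p^2}{3} < q < p^{*}$. Note also that the same argument shows $I$ is unbounded below along any ray $\{u_t\}_{t>0}$ emanating from any nonzero $u \in S_r(c)$, which is the motivation for passing to the Pohozaev-constrained minimization $m(c)$ in the sequel.
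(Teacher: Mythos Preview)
Your proof is correct and is essentially identical to the paper's own argument: the paper also takes any $u\in S_r(c)$, sets $u_t(x)=t^{3/p}u(tx)$, computes $I(u_t)=\frac{a}{p}t^{p}|\nabla u|_p^p+\frac{b}{2p}t^{2p}|\nabla u|_p^{2p}-\frac{1}{q}t^{3(q-p)/p}|u|_q^q$, and concludes from $\frac{3(q-p)}{p}>2p$ that $I(u_t)\to-\infty$ as $t\to+\infty$.
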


\begin{proof}
For any $c>0$ and any $u \in S_r(c)$, set $u_t(x) := t^{\frac{3}{p}}u(tx)$, $t>0$, then $u_t \in S(c)$ and
$$
I(u_t) = t^{p}\frac{a}{p}\left| \nabla u \right|_{p}^{p} + t^{2p}\frac{b}{2p}\left| \nabla u \right|_{p}^{2p} - t^{\frac{3(q-p)}{p}}\frac{1}{q}\left| u \right|_{q}^{q}.
$$
Because $p+\frac{2p^2}{3} < q <p^{\ast}$, $\frac{3(q-p)}{p} > 2p$. Hence, $I(u_t) \rightarrow -\infty$ as $t \rightarrow +\infty$.
\end{proof}

\begin{lem}\label{lem4.2} Let $p+\frac{2p^2}{3} < q <p^{\ast}$, then $I$ is bounded from below and coercive on $M(c)$ for any $c > 0$. Moreover, there exists a constant $C_0 > 0$ such that $I(u) \geq C_0$ for all $u\in M(c)$.
\end{lem}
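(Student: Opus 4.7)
The plan is to use the Pohozaev constraint $P(u)=0$ to eliminate the $|u|_q^q$ term in $I(u)$ and show the remaining expression is a sum of two positive multiples of $|\nabla u|_p^p$ and $|\nabla u|_p^{2p}$. Specifically, for any $u \in M(c)$, I would substitute $\frac{1}{q}|u|_q^q = \frac{p}{3(q-p)}\bigl(a|\nabla u|_p^p + b|\nabla u|_p^{2p}\bigr)$ into the definition of $I(u)$, obtaining
\begin{equation*}
I(u) = a\left(\frac{1}{p} - \frac{p}{3(q-p)}\right)|\nabla u|_p^p + b\left(\frac{1}{2p} - \frac{p}{3(q-p)}\right)|\nabla u|_p^{2p}.
\end{equation*}
The hypothesis $q > p + \frac{2p^2}{3}$ is equivalent to $3(q-p) > 2p^2$, which makes the second coefficient strictly positive; consequently the first coefficient (which only requires $q > p+\frac{p^2}{3}$) is also strictly positive. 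This immediately gives $I(u) \geq 0$ on $M(c)$ and shows that $I$ is bounded below; moreover the $|\nabla u|_p^{2p}$ term forces $I(u)\to +\infty$ whenever $|\nabla u|_p\to\infty$. Combined with the fact $|u|_p = c$ on $S_r(c)$, this yields coercivity with respect to the $W^{1,p}_r(\mathbb{R}^3)$ norm.

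The main work is the strict positive lower bound $C_0 > 0$, which requires bounding $|\nabla u|_p$ below uniformly on $M(c)$. I would exploit the constraint $P(u)=0$ together with the Gagliardo-Nirenberg inequality from Lemma \ref{lem2.1}: starting from
\begin{equation*}
a|\nabla u|_p^p \le a|\nabla u|_p^p + b|\nabla u|_p^{2p} = \frac{3(q-p)}{pq}|u|_q^q \le \frac{3(q-p)}{p^2}\cdot\frac{c^{\,q-\frac{3(q-p)}{p}}}{|Q|_p^{q-p}}\,|\nabla u|_p^{\frac{3(q-p)}{p}},
\end{equation*}
and observing that in the supercritical regime $\frac{3(q-p)}{p} > 2p > p$, one can divide through by $|\nabla u|_p^p$ (which is nonzero on $M(c)$ since the Pohozaev identity would otherwise force $u\equiv 0$, contradicting $|u|_p=c$) to get
\begin{equation*}
|\nabla u|_p^{\frac{3(q-p)}{p}-p} \ge \frac{a p^2 |Q|_p^{q-p}}{3(q-p)\, c^{\,q-\frac{3(q-p)}{p}}} =: R_0^{\frac{3(q-p)}{p}-p} > 0.
\end{equation*}

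With the uniform lower bound $|\nabla u|_p \ge R_0 > 0$ in hand, I would conclude
\begin{equation*}
I(u) \ge a\left(\frac{1}{p} - \frac{p}{3(q-p)}\right) R_0^{p} =: C_0 > 0 \quad\text{for all } u \in M(c).
\end{equation*}
The only nontrivial point is recognizing that both algebraic coefficients are positive precisely because $q > p+\frac{2p^2}{3}$; the rest is a direct combination of the Pohozaev identity on $M(c)$ and the sharp Gagliardo-Nirenberg inequality.
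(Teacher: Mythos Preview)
Your proof is correct and follows essentially the same approach as the paper: both substitute the Pohozaev constraint into $I(u)$ to obtain the identity $I(u)=a\bigl(\tfrac{1}{p}-\tfrac{p}{3(q-p)}\bigr)|\nabla u|_p^p+b\bigl(\tfrac{1}{2p}-\tfrac{p}{3(q-p)}\bigr)|\nabla u|_p^{2p}$ with positive coefficients, and then combine $P(u)=0$ with the Gagliardo--Nirenberg inequality to extract a uniform positive lower bound on $|\nabla u|_p$. The only cosmetic difference is that the paper drops the $a|\nabla u|_p^p$ term (keeping $b|\nabla u|_p^{2p}$) when deriving that lower bound, whereas you drop the $b|\nabla u|_p^{2p}$ term; both choices work since $\tfrac{3(q-p)}{p}>2p>p$.
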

\begin{proof}
Since $P(u)=0$ for all $u \in M(c)$, we have
\begin{equation}\label{eq4.1}
\begin{split}
I(u) & = I(u) - \frac{p}{3(q-p)}P(u)\\
& = a\left(\frac{1}{p}- \frac{p}{3(q-p)}\right)\left| \nabla u \right|_{p}^{p} + b\left(\frac{1}{2p}-\frac{p}{3(q-p)}\right)\left| \nabla u \right|_{p}^{2p}.\\
\end{split}
\end{equation}
Note that $\frac{1}{2p}-\frac{p}{3(q-p)} > \frac{1}{p}- \frac{p}{3(q-p)} > 0$ when $p+\frac{2p^2}{3} < q$, we obtain that $I$ is coercive on $M(c)$.
\par
By Lemma \ref{lem2.1} and $P(u)=0$,
$$
b\left| \nabla u \right|_{p}^{2p} \leq a\left| \nabla u \right|_{p}^{p} + b\left| \nabla u \right|_{p}^{2p} = \frac{3(q-p)}{pq}\left| u \right|_{q}^{q}
\leq \frac{3(q-p)}{p} \frac{c^{q-\frac{3(q-p)}{p}}}{p\left| Q \right|_{p}^{q-p}} \left| \nabla u \right|_{p}^{\frac{3(q-p)}{p}}.
$$
Since $2p < \frac{3(q-p)}{p}$, we infer that $\left| \nabla u \right|_{p}$ has a positive lower bound. Hence, by (\ref{eq4.1}), there exists a constant $C_0 > 0$ such that $I(u) \geq C_0$ for all $u\in M(c)$.
\end{proof}

By Lemma \ref{lem4.2}, we know that $m(c) \geq C_0 > 0$.

\begin{lem}\label{lem4.3} 
 Let $p+\frac{2p^2}{3} < q <p^{\ast}$ and $c>0$, for any $u \in S_r(c)$ and $u_t(x) := t^{\frac{3}{p}}u(tx)$($t>0$), there exists a unique $t_0 > 0$ such that $I(u_{t_0}) = \underset{t>0}{max} I(u_t)$ and $u_{t_0} \in M(c)$. In particular, we have
$$
t_0 < 1  \Leftrightarrow P(u) < 0 ,t_0 = 1  \Leftrightarrow P(u) = 0.
$$
\end{lem}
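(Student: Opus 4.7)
The plan is to reduce the claim to a one-variable calculus problem by looking at the scalar function $g(t) := I(u_t)$ and identifying its derivative with the Pohozaev functional evaluated at $u_t$.

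First I would compute the scaling: since $u_t(x)=t^{3/p}u(tx)$ preserves the $L^p$-norm, an elementary change of variables gives $|\nabla u_t|_p^p = t^p|\nabla u|_p^p$ and $|u_t|_q^q = t^{3(q-p)/p}|u|_q^q$. Plugging these into $I$ yields
\begin{equation*}
g(t) = I(u_t) = \frac{a}{p}t^p|\nabla u|_p^p + \frac{b}{2p}t^{2p}|\nabla u|_p^{2p} - \frac{1}{q}t^{3(q-p)/p}|u|_q^q.
\end{equation*}
Differentiating and multiplying through by $t$ I would verify directly the identity $t\,g'(t) = P(u_t)$, which automatically converts the condition $u_{t_0}\in M(c)$ into $g'(t_0)=0$, and converts the sign statements at the end of the lemma into statements about the sign of $g'(1)$.

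Next I would study the shape of $g$. Since $3(q-p)/p > 2p$ under the hypothesis $q > p + 2p^2/3$, the negative term dominates at infinity, so $g(t)\to -\infty$ as $t\to +\infty$, while $g(t)\to 0$ as $t\to 0^+$ and in fact $g'(t)\sim at^{p-1}|\nabla u|_p^p > 0$ near $0$. Hence $g$ has at least one maximum point $t_0\in(0,+\infty)$. For uniqueness I would factor out the lowest power and consider
\begin{equation*}
\widetilde\psi(t) := t^{1-p}g'(t) = a|\nabla u|_p^p + b\,t^p|\nabla u|_p^{2p} - \frac{3(q-p)}{pq}t^{3(q-p)/p - p}|u|_q^q.
\end{equation*}
The key point is that $\widetilde\psi'(t)=0$ reduces, after isolating the two monomial terms, to a single equation of the form $\kappa_1 = \kappa_2 t^{3(q-p)/p - 2p}$ with $3(q-p)/p - 2p > 0$, so $\widetilde\psi'$ has exactly one zero; combined with $\widetilde\psi(0^+) = a|\nabla u|_p^p > 0$ and $\widetilde\psi(+\infty) = -\infty$, this forces $\widetilde\psi$ to be first strictly increasing and then strictly decreasing, hence to have a unique zero $t_0$. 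Since $g'$ and $\widetilde\psi$ share the same sign on $(0,+\infty)$, $g'(t) > 0$ on $(0,t_0)$ and $g'(t) < 0$ on $(t_0,+\infty)$, so $t_0$ is the unique maximizer.

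The equivalences are then immediate from $P(u_t) = t\,g'(t)$: $P(u) < 0$ means $g'(1) < 0$, which by the monotonicity of $g'$ forces $t_0 < 1$; $P(u)=0$ means $g'(1)=0$, hence $t_0=1$ by uniqueness. I don't expect a real obstacle here; the only delicate point is checking that the equation $\widetilde\psi'(t)=0$ really is a single power equation, which is where the specific exponents $p$, $2p$, $3(q-p)/p$ (and the assumption $q > p + 2p^2/3$ that guarantees $3(q-p)/p - 2p > 0$) are used.
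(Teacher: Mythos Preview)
Your argument is correct and follows the same overall approach as the paper: define $g(t)=I(u_t)$, compute the scaling to obtain the identity $t\,g'(t)=P(u_t)$, and analyze the shape of $g$. The only differences are in presentation: the paper outsources the uniqueness of the critical point to a reference (``similar to \cite{li2021normalized}''), whereas you supply the explicit argument via $\widetilde\psi(t)=t^{1-p}g'(t)$; and for the equivalences the paper argues by dividing $h'(t_0)=0$ through by $t_0^{3(q-p)/p}$ and comparing monotone functions of $t_0$, while you use directly the sign structure $g'>0$ on $(0,t_0)$ and $g'<0$ on $(t_0,\infty)$. One small wording issue: when you write ``by the monotonicity of $g'$'' you mean this single-sign-change property, not actual monotonicity of $g'$ --- but the substance is fine.
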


\begin{proof}
For any $u \in S_r(c)$, let
$$
h(t) := I(u_t) = t^{p}\frac{a}{p}\left| \nabla u \right|_{p}^{p} + t^{2p}\frac{b}{2p}\left| \nabla u \right|_{p}^{2p} - t^{\frac{3(q-p)}{p}}\frac{1}{q}\left| u \right|_{q}^{q}, \enspace \forall t > 0.
$$
Differentiate $h(t)$ with respect to $t$, we obtain
$$
h^{\prime}(t) = \frac{t^{p}a\left| \nabla u \right|_{p}^{p} + t^{2p}b\left| \nabla u \right|_{p}^{2p} - t^{\frac{3(q-p)}{p}}\frac{3(q-p)}{qp}\left| u \right|_{q}^{q}}{t} = \frac{P(u_t)}{t}.
$$
Since $\frac{3(q-p)}{p} > 2p > p$, $h^{\prime}(t) > 0$ when $t > 0$ sufficiently small and $\underset{t \rightarrow +\infty}{\text{lim}}h^{\prime}(t) = -\infty$. Similar to \cite{li2021normalized}, we infer that $h(t)$ has a unique maximum at some point $t_0 > 0$. Therefore, $h^{\prime}(t_0) = \frac{P(u_{t_0})}{t_0} = 0$, which implies that $u_{t_0} \in M(c)$.
\par
Next, we prove the last two statements. We first claim that $P(u) < 0 \Rightarrow t_0 < 1$. Just suppose that $t_0 \geq 1$, by using $h^{\prime}(t_0) = 0$ and $P(u) < 0$, we obtain the following contradiction
\begin{equation}
\begin{split}
0 & = \frac{t_{0}^{p}a\left| \nabla u \right|_{p}^{p} + t_{0}^{2p}b\left| \nabla u \right|_{p}^{2p} - t_{0}^{\frac{3(q-p)}{p}}\frac{3(q-p)}{qp}\left| u \right|_{q}^{q}}{t_{0}^{\frac{3(q-p)}{p}}} \\
& \leq a\left| \nabla u \right|_{p}^{p}+b\left| \nabla u \right|_{p}^{2p} - \frac{3(q-p)}{qp}\left| u \right|_{q}^{q} < 0,
\end{split}
\nonumber
\end{equation}
So $t_0 < 1$. If $P(u) = 0$, then
\begin{equation}
\begin{split}
&\frac{t_{0}^{p}a\left| \nabla u \right|_{p}^{p} + t_{0}^{2p}b\left| \nabla u \right|_{p}^{2p} - t_{0}^{\frac{3(q-p)}{p}}\frac{3(q-p)}{qp}\left| u \right|_{q}^{q}}{t_{0}^{\frac{3(q-p)}{p}}} \\
& = 0\\
& = a\left| \nabla u \right|_{p}^{p}+b\left| \nabla u \right|_{p}^{2p} - \frac{3(q-p)}{qp}\left| u \right|_{q}^{q},
\end{split}
\nonumber
\end{equation}
which implies that neither $t_0 >1$ nor $t_0 <1$ could occur since $\left| \nabla u \right|_{p} \ne 0$. Thus $P(u)=0 \Rightarrow t_0 =1$. Next, we show that $t_0 < 1 \Rightarrow P(u) < 0$ and $t_0 = 1 \Rightarrow P(u) = 0$. If $t_0 <1$, then
\begin{equation}
\begin{split}
0 & = \frac{t_{0}^{p}a\left| \nabla u \right|_{p}^{p} + t_{0}^{2p}b\left| \nabla u \right|_{p}^{2p} - t_{0}^{\frac{3(q-p)}{p}}\frac{3(q-p)}{qp}\left| u \right|_{q}^{q}}{t_{0}^{\frac{3(q-p)}{p}}} \\
& > a\left| \nabla u \right|_{p}^{p}+b\left| \nabla u \right|_{p}^{2p} - \frac{3(q-p)}{qp}\left| u \right|_{q}^{q} = P(u).
\end{split}
\nonumber
\end{equation}
Also, $t_0=1$ implies $P(u)=P(u_{t_0}) = 0$.
\end{proof}

\begin{lem}\label{lem4.4} Let $c > 0$ and $p+\frac{2p^2}{3} < q <p^{\ast}$. Then $M(c)$ is a natural $C^1$-manifold and each minimizer of $I|_{M(c)}$ is a critical point of $I|_{S_r(c)}$.
\end{lem}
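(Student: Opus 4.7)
Set $G(u) := \frac{1}{p}|u|_p^p$, so that $S_r(c) = \{u \in W_r^{1,p}(\mathbb{R}^3) : G(u) = c^p/p\}$ and $M(c) = S_r(c) \cap \{P = 0\}$. The plan is to carry out the standard ``natural constraint'' argument in two steps. First I verify that $M(c)$ is a $C^1$-submanifold by showing that at every $u \in M(c)$ the pair $(G'(u), P'(u))$ is surjective onto $\mathbb{R}^2$. Since $\langle G'(u), u\rangle = c^p > 0$, the key point is to exhibit a tangent direction $v \in T_u S_r(c) = \ker G'(u)$ along which $P'(u)$ is nonzero.

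The natural candidate is the generator of the mass-preserving scaling $u_t(x) := t^{3/p}u(tx)$ already exploited in Lemma \ref{lem4.3}, namely $v := \left.\frac{d}{dt} u_t\right|_{t=1}$. Since $|u_t|_p = |u|_p$ for every $t>0$, the direction $v$ automatically lies in $T_u S_r(c)$. Differentiating the explicit expression for $P(u_t)$ at $t=1$ and using $P(u) = 0$ to eliminate $|u|_q^q$ yields, after a short calculation,
\[
\langle P'(u), v\rangle = a\left[p - \frac{3(q-p)}{p}\right]|\nabla u|_p^p + b\left[2p - \frac{3(q-p)}{p}\right]|\nabla u|_p^{2p}.
\]
The hypothesis $q > p + \frac{2p^2}{3}$ forces $\frac{3(q-p)}{p} > 2p$, so both bracketed coefficients are strictly negative; combined with the lower bound $|\nabla u|_p > 0$ on $M(c)$ from Lemma \ref{lem4.2}, this gives $\langle P'(u), v\rangle < 0$, which is the desired non-vanishing. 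By the implicit function theorem, $M(c)$ is therefore a $C^1$-submanifold of $W_r^{1,p}(\mathbb{R}^3)$.

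For the naturality claim, let $u_0$ be a minimizer of $I|_{M(c)}$. The two-constraint Lagrange multiplier rule produces $\mu_1, \mu_2 \in \mathbb{R}$ with $I'(u_0) = \mu_1 G'(u_0) + \mu_2 P'(u_0)$. Testing this identity against the same direction $v$ kills the $G'(u_0)$ term (since $v \in T_{u_0} S_r(c)$), while $\langle I'(u_0), v\rangle = \left.\frac{d}{dt} I(u_t)\right|_{t=1} = P(u_0) = 0$, which is exactly the identity $\Psi'(1) = P(u)$ already used in Lemma \ref{lem4.3}. Hence $\mu_2 \langle P'(u_0), v\rangle = 0$, and the sign computation just performed forces $\mu_2 = 0$, whence $I'(u_0) = \mu_1 G'(u_0)$, i.e.\ $u_0$ is a critical point of $I|_{S_r(c)}$.

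The only delicate step is the sign computation for $\langle P'(u), v\rangle$; the $L^p$-supercritical assumption $q > p + \frac{2p^2}{3}$ is used precisely to make both coefficients in the displayed identity strictly negative. At the critical exponent $q = p + \frac{2p^2}{3}$ the coefficient $2p - \frac{3(q-p)}{p}$ vanishes and this approach would degenerate, which is consistent with the fact that in the critical/subcritical regime the Pohozaev-type manifold is not needed at all (cf.\ Section 3).
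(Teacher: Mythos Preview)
Your proof is correct and lands on exactly the same decisive algebraic quantity as the paper,
\[
a\Bigl(p - \tfrac{3(q-p)}{p}\Bigr)|\nabla u|_p^p + b\Bigl(2p - \tfrac{3(q-p)}{p}\Bigr)|\nabla u|_p^{2p},
\]
but reaches it by a different mechanism. The paper argues both parts by contradiction via auxiliary PDEs: assuming $P'(u)$ and $G'(u)$ are linearly dependent makes $u$ a weak solution of a $p$-Laplace equation, and combining the Nehari and Pohozaev identities of that equation with $P(u)=0$ forces the displayed quantity to vanish; the naturality part repeats the device on $I'(u_0)-\lambda G'(u_0)-\mu P'(u_0)=0$. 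You instead test everything against the single scaling direction $v=\partial_t u_t|_{t=1}$, which is more transparent---it exposes the identity as simply $\tfrac{d}{dt}P(u_t)|_{t=1}$ after eliminating $|u|_q^q$---and handles the manifold and naturality parts uniformly with one computation.

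The only caveat is that $v=\tfrac{3}{p}u+x\cdot\nabla u$ need not lie in $W_r^{1,p}(\mathbb{R}^3)$ for an arbitrary $u\in M(c)$, so the pairings $\langle P'(u),v\rangle$ and $\langle I'(u_0),v\rangle$ require a word of justification (smooth approximation, or passing to the auxiliary functional $\tilde I(u,\theta)=I(k(u,\theta))$ on $S_r(c)\times\mathbb{R}$ as the paper does in Section~5). The paper's contradiction route sidesteps this: the hypothesis $P'(u)=lG'(u)$ turns $u$ into a weak solution, after which elliptic regularity legitimises the Pohozaev computation.
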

\begin{proof} For any $u \in M(c)$, we have $P(u)=0$. Set $Q(u) := |u|_{p}^{p}-c^p = 0$. Obviously, $P$, $Q$ are both of $C^1$ class. In order to prove that $M(c)$ is a manifold, we only need to verify that, for any $u \in M(c)$,
$$
\left(P^{\prime}(u),Q^{\prime}(u)\right) : W_{r}^{1,p}(\mathbb{R}^3) \rightarrow \mathbb{R}^2 \enspace \text{is} \enspace \text{surjection}.
$$
Assume by contradiction that $P^{\prime}(u)$ and $Q^{\prime}(u)$ are linearly dependent for some $u \in M(c)$, i.e., there exists a constant $l \in \mathbb{R}$, such that for any $\psi \in W_{r}^{1,p}(\mathbb{R}^3)$, we have $P^{\prime}(u)\psi = lQ^{\prime}(u)\psi$ that is
\begin{equation}\label{eq4.2}
-(a+2b|\nabla u|_{p}^{p})\varDelta _pu - \frac{3(q-p)}{p^2}|u|^{q-2}u = l |u|^{p-2}u.
\end{equation}
By the Nehari identity and Pohozaev identity of (\ref{eq4.2}), we obtain
$$
p^2\left(a|\nabla u|_{p}^{p} + 2b|\nabla u|_{p}^{2p}\right) = \frac{9(q-p)^2}{pq} |u|_{q}^{q},
$$
combining with $P(u)=0$, we have
$$
\left(\frac{3(q-p)}{p^2}-1\right)a|\nabla u|_{p}^{p} + \left(\frac{3(q-p)}{p^2}-2\right)b|\nabla u|_{p}^{2p} = 0.
$$
Since $\frac{3(q-p)}{p^2}-2 > 0$, we conclude that $|\nabla u|_{p}^{p} = 0$ which is impossible.
\par
Next, we prove the last statement. Suppose that $u$ is a minimizer of $I|_{M(c)}$, then $P(u) = 0$. By the Lagrange multiplier rule, there exist two Lagrange multipliers $\lambda$ and $\mu$ such that
\begin{equation}\label{eq4.3}
I^{\prime}(u) - \lambda |u|^{p-2}u - \mu P^{\prime}(u) = 0 \enspace in \enspace \left(W_{r}^{1,p}(\mathbb{R}^3)\right)^{\prime}.
\end{equation}
By the Nehari identity and Pohozaev identity of (\ref{eq4.3}), we obtain
$$
a|\nabla u|_{p}^{p} + b|\nabla u|_{p}^{2p} - \frac{3(q-p)}{qp}|u|_{q}^{q} - \mu\left[ap|\nabla u|_{p}^{p} + 2bp|\nabla u|_{p}^{2p} - \frac{3(q-p)^2}{qp^2}|u|_{q}^{q}\right] = 0.
$$
Recalling that $P(u) = a\left| \nabla u \right|_{p}^{p} + b\left| \nabla u \right|_{p}^{2p} - \frac{3(q-p)}{pq}\left| u \right|_{q}^{q} = 0$, we have
$$
\mu\left[ap|\nabla u|_{p}^{p} + 2bp|\nabla u|_{p}^{2p} - \frac{3(q-p)^2}{qp^2}|u|_{q}^{q}\right] = 0.
$$
Using $P(u)=0$ again, we obtain
$$
\mu\left[\left(p-\frac{3(q-p)}{p}\right)a|\nabla u|_{p}^{p} + \left(2p-\frac{3(q-p)}{p}\right)b|\nabla u|_{p}^{2p}\right] = 0.
$$
Since $p-\frac{3(q-p)}{p} < 2p-\frac{3(q-p)}{p} < 0$, we conclude that $\mu = 0$.
\end{proof}

\begin{lem}\label{lem4.5} For each $p+\frac{2p^2}{3} < q <p^{\ast}$, the function $c \mapsto m(c)$ is strictly decreasing for $c > 0$.
\end{lem}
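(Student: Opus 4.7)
The plan is a dilation argument: given $0<c_1<c_2$, dilate a generic $u \in M(c_1)$ by $\lambda := c_2/c_1 > 1$ to produce $v := \lambda u \in S_r(c_2)$, project it back onto $M(c_2)$ via Lemma \ref{lem4.3}, and then compare energies. Concretely, $v_t(x) = t^{3/p} v(tx) = \lambda u_t(x)$ in the notation of Lemma \ref{lem4.3}, so the substitution $s = \lambda t$ yields the key identity
$$
I(v_{s/\lambda}) \;=\; I(u_s) \;-\; \frac{1}{q}\bigl(\lambda^{\,q - 3(q-p)/p} - 1\bigr)\, s^{3(q-p)/p}\,|u|_q^q, \qquad s > 0.
$$
Since $q < p^{\ast}$ gives $q - 3(q-p)/p > 0$, the correction term is strictly positive whenever $|u|_q \neq 0$. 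Lemma \ref{lem4.3} applied to $u \in M(c_1)$ (which satisfies $P(u)=0$) gives $I(u_s) \le I(u)$ for all $s > 0$, while applied to $v \in S_r(c_2)$ it produces a unique $t_0 > 0$ with $v_{t_0} \in M(c_2)$ and $I(v_{t_0}) = \max_{t > 0} I(v_t)$. Combining yields $m(c_2) \le I(v_{t_0}) < I(u)$ for every fixed $u \in M(c_1)$.

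To upgrade this into the strict inequality $m(c_2) < m(c_1)$ (rather than only $\le$ after taking the infimum), I would take a minimizing sequence $\{u_n\}\subset M(c_1)$ with $I(u_n)\to m(c_1)$, set $v_n := \lambda u_n$, and denote by $s_n^{\ast} := \lambda t_n$ the corresponding maximizer. The identity above gives
$$
I(u_n) - I\bigl((v_n)_{t_n}\bigr) \;\ge\; \frac{1}{q}\bigl(\lambda^{q - 3(q-p)/p}-1\bigr) (s_n^{\ast})^{3(q-p)/p} |u_n|_q^q,
$$
and by Lemma \ref{lem4.2} the quantity $|\nabla u_n|_p$ sits in a fixed compact subinterval of $(0,\infty)$, which together with $P(u_n)=0$ forces $|u_n|_q^q \ge C_1 > 0$ uniformly in $n$.

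The main obstacle is the uniform lower bound on $s_n^{\ast}$. Writing the critical-point equation $\partial_t I(v_{n,t})|_{t_n}=0$ and using $P(u_n)=0$ to eliminate $|u_n|_q^q$ reduces it to
$$
\alpha_n + \beta_n (s_n^{\ast})^p \;=\; (1+\mu)\,(s_n^{\ast})^{\gamma},
\qquad \gamma := \tfrac{3(q-p)}{p}-p>0,\quad \mu := \lambda^{q-3(q-p)/p}-1>0,
$$
where $\alpha_n,\beta_n\in(0,1)$, $\alpha_n+\beta_n=1$, are the normalized shares of the $a$- and $b$-terms (so $\alpha_n = a|\nabla u_n|_p^p/(a|\nabla u_n|_p^p + b|\nabla u_n|_p^{2p})$). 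The two-sided bound on $|\nabla u_n|_p$ keeps $\alpha_n$ bounded away from $0$, and dropping the positive $\beta_n (s_n^{\ast})^p$ term gives $(s_n^{\ast})^\gamma \ge \alpha_n/(1+\mu)$, which produces the desired uniform $s_n^{\ast} \ge s_0>0$. A uniform positive gap $\delta > 0$ then results, and passing to the limit yields $m(c_2) \le m(c_1) - \delta < m(c_1)$, proving strict monotonicity.
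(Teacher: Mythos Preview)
Your proof is correct, and the overall architecture (transport $u \in M(c_1)$ to $S_r(c_2)$, project back onto $M(c_2)$ via Lemma~\ref{lem4.3}, exploit a strictly positive gap along a minimizing sequence) matches the paper's. The genuine difference is the choice of transport map. The paper uses the spatial dilation
\[
v_n(x) \;=\; \Bigl(\tfrac{c_2}{c_1}\Bigr)^{1-3/p} u_n\!\Bigl(\tfrac{c_1}{c_2}\,x\Bigr),
\]
which has the feature $|\nabla v_n|_p^p = |\nabla u_n|_p^p$. The lower bound on the projection parameter $t_n$ is then immediate: since $(v_n)_{t_n}\in M(c_2)$, Lemma~\ref{lem4.2} gives a uniform positive lower bound on $|\nabla (v_n)_{t_n}|_p^p = t_n^p |\nabla u_n|_p^p$, and the upper bound on $|\nabla u_n|_p^p$ finishes it. You instead use scalar multiplication $v=\lambda u$ and the substitution $s=\lambda t$, which produces the clean identity $I(v_{s/\lambda}) = I(u_s) - \text{(positive correction)}$ but forces you to extract the lower bound on $s_n^{\ast}$ from the critical-point equation; your reduction to $\alpha_n + \beta_n (s_n^{\ast})^p = (1+\mu)(s_n^{\ast})^{\gamma}$ and the observation that $\alpha_n \ge a/(a + b k_4)$ is a valid way to do this. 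Both routes yield the same uniform gap $\delta > 0$; the paper's dilation is slightly more economical in that it bypasses the algebraic analysis of the maximizer, while your scalar-multiplication identity is arguably more transparent about \emph{why} the energy drops.
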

\begin{proof} For any $0<c_1<c_2$, by $\gamma(c_1) > 0$ and Lemma \ref{lem4.3}, there exists $\{u_n\} \subset M(c_1)$ such that 
$$
I(u_n) = \underset{t>0}{\text{max}} I((u_n)_t) \leq m(c_1) + \frac{1}{n}.
$$
By (\ref{eq4.1}) and Lemma \ref{lem4.2}, there exists $k_i > 0(i=1,2,3,4)$ indenpendent of $n$ such that
\begin{equation}\label{eq4.4}
k_1 \leq |u_n|_{q}^{q} \leq k_2, k_3 \leq |\nabla u_n|_{p}^{p} \leq k_4.
\end{equation}
Let $v_n := \left(\frac{c_2}{c_1}\right)^{1-\frac{3}{p}}u_n\left(\frac{c_1}{c_2}x\right)$, then $v_n \in S_r(c_2)$ and
\begin{equation}\label{eq4.5}
|\nabla v_n|_{p}^{p} = |\nabla u_n|_{p}^{p}, |v_n|_{q}^{q} = 
\left(\frac{c_2}{c_1}\right)^{\frac{q(p-3)}{p}+3}|u_n|_{q}^{q}.
\end{equation}
Moreover, by Lemma \ref{lem4.3}, there exists $t_n > 0$ such that $(v_n)_{t_n} \in M(c_2)$ and $I((v_n)_{t_n}) = \underset{t>0}{\text{max}} I((v_n)_t)$. Since $\{(v_n)_{t_n}\} \subset M(c_2)$, by Lemma \ref{lem4.2}, $\{|\nabla (v_n)_{t_n}|_{p}^{p}\}$ has a positive lower bound which indenpendent of $n$. Combining with $|\nabla v_n|_{p}^{p} = |\nabla u_n|_{p}^{p}$, (\ref{eq4.4}) and $t_n^p = \frac{|\nabla (v_n)_{t_n}|_{p}^{p}}{|\nabla v_n|_{p}^{p}}$, we conclude that there exists a constant $C_1 > 0$ such that 
\begin{equation}\label{eq4.6}
t_n \geq C_1,
\end{equation}
for any $n$. 
Hence, by $q <p^{\ast} = \frac{3p}{3-p}$ and (\ref{eq4.4})-(\ref{eq4.6}), we have
\begin{equation}
\begin{split}
m(c_2) \leq I((v_n)_{t_n}) & = I((u_n)_{t_n}) - \left(\left(\frac{c_2}{c_1}\right)^{\frac{q(p-3)}{p}+3}-1\right)\frac{t_n^{\frac{3(q-p)}{p}}}{q} |u_n|_{q}^{q}\\
& \leq m(c_1)+\frac{1}{n}-\left(\left(\frac{c_2}{c_1}\right)^{\frac{q(p-3)}{p}+3}-1\right)\frac{C_1^{\frac{3(q-p)}{p}}}{q} k_1.\\
\end{split}
\end{equation}
which implies that $m(c_2) < m(c_1)$, by letting $n \rightarrow +\infty$.
\end{proof}

\begin{proof}[Proof of Theorem \ref{thm1.3}]
For $c > 0$, let $\{v_n\} \subset M(c)$ be a minimizing sequence for $m(c)$. If $|v_n|_{q}^{q} \rightarrow 0$, then by $P(v_n) = 0$, we deduce that $|\nabla v_n|_{p}^{p} \rightarrow 0$ which contradicts with Lemma \ref{lem4.2}. Thus, similar to the proof of Theorem \ref{thm1.1}, there exists $\{y_n\} \subset \mathbb{R}^3$ such that
$$
\int_{B_1(y_n)}{|v_n|^pdx} > 0.
$$
Denote $u_n(\cdot) := v_n(\cdot +y_n)$, then $\{u_n\} \subset M(c)$ is still a minimizing sequence for $m(c)$. By (\ref{eq4.1}), $\{u_n\}$ is bounded in $W_{r}^{1,p}(\mathbb{R}^3)$. Hence, up to subsequence, we may assume that there exists $u \in W_{r}^{1,p}(\mathbb{R}^3)$ such that
\begin{equation}\label{eq4.8}
\begin{cases}
u_n \rightharpoonup u \ne 0, & \text{in} \enspace W_{r}^{1,p}(\mathbb{R}^3),\\
u_n \rightarrow u, & \text{in} \enspace L^{s}(\mathbb{R}^3),s \in (p,p^{\ast}),\\
u_n(x) \rightarrow u(x), & \text{a.e. in} \enspace \mathbb{R}^3.\\
\end{cases}
\end{equation}
Then, $\alpha := |u|_{p} \in (0,c]$. Next, we show that $u$ is a minimizer of $I|_{M(c)}$ for $m(c)$.
By (\ref{eq4.8}) and $u_n \in M(c)$, we obtain that $P(u) \leq \underset{n \rightarrow +\infty}{\text{lim}}P(u_n) = 0$. Hence, by Lemma \ref{lem4.3}, there exists $t_0 \in (0,1]$ such that $u_{t_0} \in M(c)$. Thus, we have
\begin{equation}
\begin{split}
    m(\alpha) \leq I(u_{t_0}) 
& = I(u_{t_0}) - \frac{p}{3(q-p)}P(u_{t_0}) \\
& = t_{0}^{p}a\left(\frac{1}{p}- \frac{p}{3(q-p)}\right)|\nabla u |_{p}^{p} + t_{0}^{2p}b\left(\frac{1}{2p}-\frac{p}{3(q-p)}\right)| \nabla u|_{p}^{2p} \\
& \leq a\left(\frac{1}{p}- \frac{p}{3(q-p)}\right)| \nabla u |_{p}^{p} + b\left(\frac{1}{2p}-\frac{p}{3(q-p)}\right)|\nabla u |_{p}^{2p} \\
& \leq \underset{n \rightarrow +\infty}{\text{lim  inf }}\left[a\left(\frac{1}{p}- \frac{p}{3(q-p)}\right)| \nabla u_n |_{p}^{p} + b\left(\frac{1}{2p}-\frac{p}{3(q-p)}\right)| \nabla u_n |_{p}^{2p}\right] \\
& \leq \underset{n \rightarrow +\infty}{\text{lim inf }}\left(I(u_n)-\frac{p}{3(q-p)}P(u_n)\right) = m(c) \leq m(\alpha), \\
\end{split}
\nonumber
\end{equation}
which means $m(\alpha) = m(c)$ and $t_0 =1$. By Lemma \ref{lem4.5} and $\alpha \in (0,c]$, it must have $\alpha = c$. Thus, $u \in M(c)$ and $I(u) = m(c)$, i.e. $I|_{M(c)}$ attains its minimum at $u$. By Lemma \ref{lem4.4} and Lemma \ref{lem2.2}, we conclude that there exists $\lambda < 0$ such that $(u,\lambda) \in W_{r}^{1,p}(\mathbb{R}^3) \times \mathbb{R}$ is a couple of solution to (\ref{eq1.1})-(\ref{eq1.2}). Moreover, if $(v,\mu) \in W_{r}^{1,p}(\mathbb{R}^3) \times \mathbb{R}$ is also a couple of solution for $c$. Then, by Lemma \ref{lem2.2}, we have $P(v)=0$ wich implies that $v \in M(c)$. Since $I(u) = m(c) = \underset{w \in M(c)}{\text{inf}}I(w) \leq I(v)$, we see that $u$ is a radial ground state of (\ref{eq1.1})-(\ref{eq1.2}).
\end{proof}

\section{Proof of Theorem 1.4 and Theorem 1.5}
 In this section, we study the multiplicity and asymptotic behavior of normalized solutions to (\ref{eq1.1})-(\ref{eq1.2}) in the $L^p$ supercritical case. Fisrtly, we give some lemmas which are important for our proof of Theorem \ref{thm1.4}. We follow some ideas of \cite{zhang2022normalized}.
\par
Let $\{e_n^{\prime}\}_{n=1}^{\infty}$ be a Schauder basis of $W^{1,p}(\mathbb{R}^3)$(\cite{triebel1995interpolation}). Set
$$
e_n = \int_{O(N)}{e_n^{\prime}(g(x))d\mu_g},
$$
where $O(N)$ denotes the orthogonal group on $\mathbb{R}^3$ and $d\mu_g$ is the Haar measure on $O(N)$. Then going if necessary to select one in identical elements, we see that $\{e_n\}_{n=1}^{\infty}$ is a Schauder basis of $W_{r}^{1,p}(\mathbb{R}^3)$. Without loss of generality, we assume that $\vert e_n \vert = 1$ for any $n \geq 1$. $\forall n \in \mathbb{N}^+$, denote
$$
V_n := \text{span} \{ e_1, ... , e_n\},V_n^{\bot} := \overline{\text{span}\{e_i : i \geq n+1\}}.
$$
Clearly, $W_{r}^{1,p}(\mathbb{R}^3) = V_n \oplus V_n^{\bot}$ for any $n \in \mathbb{N}^+$.

\begin{lem}\label{lem5.1} (\cite{zhang2022normalized}, Lemma 6.1) Let $p+\frac{2p^2}{3} < q <p^{\ast}$ and $c > 0$, then there holds
$$
\mu_n := \underset{u \in V_n^{\bot}\cap S_r(c)}{\text{inf}} \frac{|\nabla u|_p^p}{|u|_q^p} \rightarrow +\infty, \enspace \text{as} \enspace n \rightarrow +\infty.
$$
\end{lem}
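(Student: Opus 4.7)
The plan is to argue by contradiction. Suppose $\{\mu_n\}$ does not diverge to $+\infty$; then along a subsequence (still indexed by $n$) we have $\mu_n \le M$, and we may choose $u_n \in V_n^{\bot}\cap S_r(c)$ realising $|\nabla u_n|_p^p \le (M+1)|u_n|_q^p$. Applying the Gagliardo--Nirenberg inequality of Lemma~\ref{lem2.1} with $|u_n|_p = c$ gives $|u_n|_q^p \le C_0|\nabla u_n|_p^{p\theta}c^{p(1-\theta)}$ where $\theta = 3(q-p)/(qp)\in(0,1)$; combining with the ratio bound and using $p(1-\theta)>0$ yields $|\nabla u_n|_p \le K(M,c)$, so $\{u_n\}$ is bounded in $W_r^{1,p}(\mathbb{R}^3)$.

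By reflexivity, extract a subsequence with $u_n \rightharpoonup u^{\star}$ weakly in $W_r^{1,p}(\mathbb{R}^3)$. The coefficient functionals $\alpha_i$ of the Schauder basis $\{e_n\}$ are continuous on $W_r^{1,p}$ and satisfy $\alpha_i(u_n)=0$ whenever $n \ge i$; letting $n \to \infty$ for each fixed $i$ gives $\alpha_i(u^{\star})=0$, and the basis expansion forces $u^{\star}=0$. The compact radial Sobolev embedding $W_r^{1,p}(\mathbb{R}^3)\hookrightarrow L^q(\mathbb{R}^3)$ (valid since $p<q<p^{\ast}$) upgrades this to $u_n \to 0$ strongly in $L^q$, so $|u_n|_q\to 0$, and the ratio bound then forces $|\nabla u_n|_p\to 0$.

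The main obstacle is the closing contradiction, since the limit configuration $|u_n|_p = c$, $|\nabla u_n|_p\to 0$, $|u_n|_q\to 0$ is compatible with generic mass-spreading radial profiles, and no direct H\"{o}lder interpolation forces $|u_n|_p \to 0$ (because $p$ sits below both $q$ and $p^{\ast}$ and the embedding $W_r^{1,p}\hookrightarrow L^p$ is not compact). The intended strengthening is quantitative: compactness of $j \colon W_r^{1,p}\hookrightarrow L^q$ together with a Schauder basis in a reflexive space ensures that the restricted operator norm $\varepsilon_n := \|j|_{V_n^{\bot}}\|_{W^{1,p}\to L^q}$ tends to $0$, so that $|u|_q \le \varepsilon_n\|u\|_{W^{1,p}}$ for every $u \in V_n^{\bot}$. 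Combining this with the ratio bound, the mass constraint $|u_n|_p = c$ and Lemma~\ref{lem2.1} pins the ratio $|\nabla u_n|_p^p / |u_n|_q^p$ from below by a quantity diverging as $n \to \infty$, contradicting $\mu_n \le M$ and completing the proof.
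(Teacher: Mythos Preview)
The paper does not prove this lemma; it is quoted from \cite{zhang2022normalized}. So there is no ``paper's proof'' to compare against, and the question is simply whether your argument is complete.

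Your overall strategy---contradiction, boundedness via Gagliardo--Nirenberg, weak limit zero via the Schauder coordinate functionals, strong $L^q$ convergence via the compact radial embedding, and then the quantitative fact $\varepsilon_n:=\|j|_{V_n^\bot}\|_{W^{1,p}\to L^q}\to 0$---is the natural one, and each of these individual ingredients is correct. The gap is in the last sentence: you assert that $\varepsilon_n\to 0$, the ratio bound, the mass constraint, and Lemma~\ref{lem2.1} together force $|\nabla u_n|_p^p/|u_n|_q^p\to\infty$, but you do not show how, and in fact they do not.

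Write $R(u)=|\nabla u|_p^p/|u|_q^p$ and $t=|\nabla u|_p^p$. The bound $|u|_q\le \varepsilon_n\|u\|_{W^{1,p}}$ on $V_n^\bot\cap S_r(c)$ yields
\[
R(u)\ \ge\ \varepsilon_n^{-p}\,\frac{t}{t+c^p},
\]
which degenerates to $0$ as $t\to 0$; Gagliardo--Nirenberg yields $R(u)\ge C_1\,t^{1-\theta}$, which also degenerates as $t\to 0$. Their combination still gives a lower bound vanishing with $t$, so nothing you have written excludes $u_n\in V_n^\bot\cap S_r(c)$ with $|\nabla u_n|_p$ small and $R(u_n)\le M$. (Observe also that $R$ is invariant under $u\mapsto\lambda u$, so the constraint $|u|_p=c$ is actually vacuous in the definition of $\mu_n$; this you did not use, and it makes the difficulty more transparent.) To close the argument you would need a uniform positive lower bound on $|\nabla u|_p$ over $V_n^\bot\cap S_r(c)$; a general Schauder basis does not provide one, since $V_n^\bot$ has finite codimension while the set of ``spread--out'' directions in $S_r(c)$ with small gradient is infinite--dimensional. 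As written the proof is therefore incomplete; you should consult the actual argument in \cite{zhang2022normalized} to see what additional structure is used.
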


For $n \in \mathbb{N}^+$, we define
$$
\rho_n := \left(\frac{a(\mu_n)^{\frac{q}{p}}}{L}\right)^{\frac{1}{q-p}},
$$
where
$$
L := \underset{x > 0}{\text{max}} \frac{(x^p+1)^{\frac{q}{p}}}{x^q+1}.
$$
Let $B_n := \{u \in V_n^{\bot}\cap S_r(c) : |\nabla u|_p = \rho_n\}$. We have
\begin{lem}\label{lem5.2}
$$
\beta_n := \underset{u \in B_n}{\text{inf}} I(u) \rightarrow +\infty, \enspace \text{as} \enspace n \rightarrow +\infty.
$$
\end{lem}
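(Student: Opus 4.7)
The plan is to use Lemma \ref{lem5.1} to bound the only negative term $-\frac{1}{q}|u|_q^q$ in $I(u)$ by a quantity involving $|\nabla u|_p$, and then exploit the specific choice of the radius $\rho_n$ so that this negative contribution is absorbed by the positive terms $\frac{a}{p}|\nabla u|_p^p + \frac{b}{2p}|\nabla u|_p^{2p}$. Since $\rho_n$ itself blows up with $n$ (because $\mu_n \to +\infty$), a residual positive term will remain and drive $\beta_n$ to infinity.

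More precisely, for $u \in B_n$ one has $u \in V_n^{\bot}\cap S_r(c)$ and $|\nabla u|_p = \rho_n$, so Lemma \ref{lem5.1} yields
$$
|u|_q^q \;\leq\; \frac{|\nabla u|_p^q}{\mu_n^{q/p}} \;=\; \frac{\rho_n^q}{\mu_n^{q/p}}.
$$
The definition $\rho_n^{q-p} = a\mu_n^{q/p}/L$ rewrites this as $|u|_q^q \leq a\rho_n^p/L$, which substituted into $I$ gives
$$
I(u) \;\geq\; \frac{a\rho_n^p}{p}\Bigl(1-\frac{p}{qL}\Bigr) + \frac{b}{2p}\rho_n^{2p}.
$$
I will observe that $L\geq 1$ (indeed the ratio $(x^p+1)^{q/p}/(x^q+1)$ tends to $1$ at $0$ and at $\infty$, and exceeds $1$ at $x=1$ since $q>p$), so $qL>p$ and the first coefficient is strictly positive. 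Since $\mu_n\to+\infty$ by Lemma \ref{lem5.1} and $q>p$, the identity $\rho_n^{q-p}=a\mu_n^{q/p}/L$ forces $\rho_n\to+\infty$, and therefore $\beta_n\to+\infty$.

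There is no real obstacle here: the estimate is a direct computation once Lemma \ref{lem5.1} is available and the constant $L$ is accepted as a fixed finite positive number. The only mild point to check is that the constant $L$ is indeed finite and positive (continuity plus the limits $\to 1$ at the endpoints), but that was built into its definition. In fact the $b$-term alone already dominates for $n$ large, so the argument is robust and the conclusion $\beta_n\to+\infty$ follows uniformly over $u\in B_n$.
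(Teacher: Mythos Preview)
Your proof is correct and follows essentially the same route as the paper's: use Lemma~\ref{lem5.1} to bound $|u|_q^q$ in terms of $|\nabla u|_p$ and $\mu_n$, substitute $|\nabla u|_p=\rho_n$, and exploit the defining relation $\rho_n^{\,q-p}=a\mu_n^{q/p}/L$ to see that the resulting lower bound diverges. Your execution is in fact slightly cleaner than the paper's, since you use the direct estimate $|u|_q^q\le \mu_n^{-q/p}\rho_n^q$ and only need $L\ge 1$, whereas the paper first passes through $|u|_q^p\le \mu_n^{-1}(|\nabla u|_p^p+1)$ and invokes the full definition of $L$ to arrive at $\beta_n\ge \bigl(\tfrac{1}{p}-\tfrac{1}{q}\bigr)a\rho_n^p-\tfrac{L}{q\mu_n^{q/p}}$.
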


\begin{proof}
$\forall u \in B_n$, by Lemma \ref{lem5.1},
$$
|u|_q^p \leq \mu_n^{-1}|\nabla u|_p^p \leq \mu_n^{-1}(|\nabla u|_p^p+1).
$$
Then,
\begin{equation}
\begin{split}
I(u) & = \frac{a}{p}|\nabla u|_p^p + \frac{b}{2p}|\nabla u|_p^{2p}-\frac{1}{q}|u|_q^q \\
& \geq \frac{a}{p}|\nabla u|_p^p- \frac{1}{q(\mu_n)^{\frac{q}{p}}}\left(|\nabla u|_p^p+1\right)^{\frac{q}{p}}\\
& \geq \frac{a}{p}|\nabla u|_p^p- \frac{L}{q(\mu_n)^{\frac{q}{p}}}\left(|\nabla u|_p^q+1\right)\\
& = \frac{a}{p}(\rho_n)^p-\frac{L}{q(\mu_n)^{\frac{q}{p}}}(\rho_n)^q - \frac{L}{q(\mu_n)^{\frac{q}{p}}}\\
& = \left(\frac{1}{p}-\frac{1}{q}\right)a(\rho_n)^p - \frac{L}{q(\mu_n)^{\frac{q}{p}}}.
\end{split}
\nonumber
\end{equation}
Thus $\beta_n \geq \left(\frac{1}{p}-\frac{1}{q}\right)a(\rho_n)^p - \frac{L}{q(\mu_n)^{\frac{q}{p}}}$. By Lemma \ref{lem5.1} and definition of $\rho_n$, we obtain that $\beta_n \rightarrow +\infty$ as $n \rightarrow +\infty$.
\end{proof}

\begin{lem}\label{lem5.3} There exits $0< \rho_0^{\prime} < \rho_0$ such that
$$
\underset{u \in S_r(c),|\nabla u|_p \leq \rho_0^{\prime}}{\text{sup}} I(u) < \beta_0 := \underset{u \in S_r(c),|\nabla u|_p = \rho_0}{\text{inf}} I(u).
$$
Moreover, $\beta_0 > 0$.
\end{lem}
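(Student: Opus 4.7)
The plan is to sandwich $I$ on $S_r(c)$ between two comparable polynomial envelopes in the single variable $\rho = |\nabla u|_p$ and then exploit the fact that the $L^p$-supercritical assumption $q > p + \frac{2p^2}{3}$ puts the nonlinear term at a strictly higher power than the kinetic/Kirchhoff terms. Concretely, from the Gagliardo--Nirenberg inequality (Lemma \ref{lem2.1}) applied on $S_r(c)$ one gets
$$ I(u) \;\geq\; g(|\nabla u|_p) \;:=\; \frac{a}{p}|\nabla u|_p^{p} + \frac{b}{2p}|\nabla u|_p^{2p} - \frac{c^{\,q-\frac{3(q-p)}{p}}}{p|Q|_p^{q-p}}\,|\nabla u|_p^{\frac{3(q-p)}{p}}, $$
while discarding the nonnegative term $\frac{1}{q}|u|_q^q$ gives the dual bound $I(u) \leq \frac{a}{p}|\nabla u|_p^{p} + \frac{b}{2p}|\nabla u|_p^{2p}$.

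The scalar function $g$ will drive the whole argument. Since $q > p + \frac{2p^2}{3}$ forces the exponents to satisfy $\frac{3(q-p)}{p} > 2p > p$, near $\rho=0$ one has $g(\rho)\sim \frac{a}{p}\rho^{p}>0$, while $g(\rho)\to -\infty$ as $\rho\to+\infty$. Hence $g$ admits a strictly positive interior maximum on $(0,+\infty)$; I would choose $\rho_0>0$ to be such a maximizer (any point with $g(\rho_0)>0$ would do). The lower envelope then immediately gives $\beta_0 = \inf_{u\in S_r(c),\,|\nabla u|_p=\rho_0} I(u) \geq g(\rho_0) > 0$, which simultaneously delivers the last statement of the lemma.

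For the separation $\sup < \inf$, the upper envelope yields
$$ \sup_{u \in S_r(c),\,|\nabla u|_p \leq \rho'} I(u) \;\leq\; \frac{a}{p}(\rho')^{p} + \frac{b}{2p}(\rho')^{2p}, $$
and the right-hand side tends to $0$ as $\rho'\to 0^{+}$. Choosing $\rho_0' \in (0,\rho_0)$ small enough to make this bound strictly smaller than $g(\rho_0)$ finishes the proof.

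I do not anticipate a real obstacle: the statement is exactly the ``mountain-pass geometry at the envelope level'' that the supercritical exponent forces. The only point requiring a moment of care is confirming that $g$ does have a strictly positive value somewhere on $(0,+\infty)$ under the assumption $q > p+\frac{2p^2}{3}$; this is a one-variable calculus check based solely on comparing the exponents $p$, $2p$, $\frac{3(q-p)}{p}$, and it in no way depends on the choice of $c$.
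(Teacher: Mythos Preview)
Your proposal is correct and follows essentially the same approach as the paper: both sandwich $I(u)$ between the Gagliardo--Nirenberg lower envelope and the kinetic upper bound $\frac{a}{p}|\nabla u|_p^{p} + \frac{b}{2p}|\nabla u|_p^{2p}$, use the exponent comparison $\frac{3(q-p)}{p} > 2p$ to locate a $\rho_0$ with $\beta_0 > 0$, and then shrink $\rho_0'$ so that the upper bound falls below $\beta_0$. The only cosmetic difference is that you take $\rho_0$ at the maximizer of the lower envelope while the paper simply takes $\rho_0$ small; both choices work for the same reason.
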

\begin{proof}
Since $u \in S_r(c)$, by Lemma \ref{lem2.1}, we have
\begin{equation}
\begin{split}
& \frac{a}{p}\left| \nabla u \right|_{p}^{p} + \frac{b}{2p}\left| \nabla u \right|_{p}^{2p} - \frac{c^{q-\frac{3(q-p)}{p}}}{p\left| Q \right|_{p}^{q-p}} \left| \nabla u \right|_{p}^{\frac{3(q-p)}{p}}\\
& \leq I(u) \leq \frac{a}{p}\left| \nabla u \right|_{p}^{p} + \frac{b}{2p}\left| \nabla u \right|_{p}^{2p}.
\end{split}
\nonumber
\end{equation}
Note that $\frac{3(q-p)}{p} > 2p$, we conclude that there exits $\rho_0 > 0$ small enough such that $\beta_0 > 0$. Meanwhile, we can choose $0< \rho_0^{\prime} < \rho_0$ small enough such that
$$
\underset{u \in S_r(c),|\nabla u|_p \leq \rho_0^{\prime}}{\text{sup}} I(u) < \frac{\beta_0}{2} < \beta_0.
$$
\end{proof}

By Lemma \ref{lem5.2}, without loss of generality, we assume that $\beta_n > \beta_0$ for any $n \in \mathbb{N}^+$.

Next we begin to set up our min-max procedure. First we introduce the map
\begin{equation}\label{eq5.1}
\begin{split}
k: W_r^{1,p}(\mathbb{R}^3) \times \mathbb{R} & \rightarrow W_r^{1,p}(\mathbb{R}^3) \\
(u,\theta) & \rightarrow k(u,\theta):= e^{\frac{3}{p}\theta}u(e^{\theta}x).\\
 \end{split}
 \end{equation}
Then for any given $u \in S_r(c)$, we have $k(u,\theta) \in S_r(c)$ and
\begin{equation}\label{eq5.2}
|\nabla k(u,\theta)|_p^p = e^{p\theta}|\nabla u|_p^p,
|k(u,\theta)|_q^q = e^{\frac{3(q-p)}{p}\theta} |u|_q^q,
\end{equation}
and 
\begin{equation}\label{eq5.3}
I(k(u,\theta)) = e^{p\theta}\frac{a}{p}|\nabla u|_p^p + e^{2p\theta}\frac{b}{2p}|\nabla u|_p^{2p} - e^{\frac{3(q-p)}{p}\theta} \frac{1}{q}|u|_q^q.
\end{equation}
Note that $\frac{3(q-p)}{p} > 2p$, we conclude from (\ref{eq5.1})-(\ref{eq5.3}) that
\begin{equation}
\begin{cases}
|\nabla k(u,\theta)|_p^p \rightarrow 0,I(k(u,\theta)) \rightarrow 0, & \theta \rightarrow -\infty,\\
|\nabla k(u,\theta)|_p^p \rightarrow +\infty,I(k(u,\theta)) \rightarrow -\infty, & \theta \rightarrow +\infty.\\
\end{cases}
\nonumber
\end{equation}

Since $V_n$ is finite dimensional, it follows that for each $n \in \mathbb{N}^+$, there exists $\theta_n > 0$ such that
\begin{equation}\label{eq5.4}
\overline{g}_n \in C([0,1] \times (S_r(c)\cap V_n),S_r(c)): \overline{g}_n(t,u) = k(u,(2t-1)\theta_n)
\end{equation}
satisfies
\begin{equation}\label{eq5.5}
\begin{cases}
|\nabla \overline{g}_n(0,u)|_p^p < (\rho_0^{\prime})^p < (\rho_n)^p, &|\nabla \overline{g}_n(1,u)|_p^p > (\rho_n)^p,\\
I(\overline{g}_n(0,u)) < \text{max}\{ \beta_0, \beta_n\}, &I(\overline{g}_n(1,u)) < \beta_n,\\
\end{cases}
\end{equation}
for all $u \in S_r(c)\cap V_n$.
Now we define
\begin{equation}\label{5.6}
\begin{split}
\Gamma_n := & \{g \in C([0,1]\times (S_r(c)\cap V_n),S_r(c)): g(t,-u)=-g(t,u), \\
& g(0,u)= \overline{g}_n(0,u),g(1,u)= \overline{g}_n(1,u), \forall u \in (S_r(c)\cap V_n)\}.\\
\end{split}
\end{equation}
Clearly $\overline{g}_n \in \Gamma_n$. We introduce the following linking property:

\begin{lem}\label{lem5.4} (\cite{bartsch2012normalized}, Lemma 2.3) For each $g \in \Gamma_n$, there exists $(t,u) \in [0,1]\times (S_r(c)\cap V_n)$ such that $g(t,u) \in B_n$.
\end{lem}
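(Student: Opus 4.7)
I would argue by contradiction. Assume there exists $g \in \Gamma_n$ with $g([0,1] \times (S_r(c) \cap V_n)) \cap B_n = \emptyset$. Let $P_n\colon W_r^{1,p}(\mathbb{R}^3) \to V_n$ denote the continuous partial-sum projection associated with the Schauder basis $\{e_n\}$, and introduce the auxiliary map
\begin{equation*}
\phi_g\colon [0,1] \times (S_r(c) \cap V_n) \to V_n \oplus \mathbb{R},\qquad \phi_g(t,u) := \bigl( P_n g(t,u),\, |\nabla g(t,u)|_p^p - \rho_n^p \bigr).
\end{equation*}
Then $\phi_g(t,u) = 0$ is equivalent to $g(t,u) \in V_n^{\perp}$ together with $|\nabla g(t,u)|_p = \rho_n$, i.e., to $g(t,u) \in B_n$. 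So the contradiction hypothesis becomes: $\phi_g$ is nowhere zero.

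Next, I would collect the structural information about $\phi_g$. The oddness clause $g(t,-u) = -g(t,u)$ in the definition of $\Gamma_n$ together with the linearity of $P_n$ makes the $V_n$-component of $\phi_g$ odd in $u$, while the $\mathbb{R}$-component is even in $u$. The boundary conditions in (\ref{eq5.5}) give $|\nabla g(0,u)|_p < \rho_0' < \rho_n < |\nabla g(1,u)|_p$ for every $u \in S_r(c) \cap V_n$, so the $\mathbb{R}$-component of $\phi_g$ is strictly negative on $\{0\} \times (S_r(c) \cap V_n)$ and strictly positive on $\{1\} \times (S_r(c) \cap V_n)$.

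Finally, I would invoke an equivariant Borsuk--Ulam/linking argument. Since $S_r(c) \cap V_n$ is $\mathbb{Z}_2$-equivariantly homeomorphic to the standard $S^{n-1}$, the cylinder $[0,1] \times (S_r(c) \cap V_n)$ is a $\mathbb{Z}_2$-cobordism of dimension $n$ between two copies of $S^{n-1}$. Radially projecting a nowhere-zero $\phi_g$ onto the unit sphere of $V_n \oplus \mathbb{R}$, and exploiting the sign change of the $\mathbb{R}$-component across the two ends together with the oddness of the $V_n$-component, one can equivariantly collapse each boundary face to a single pole; the resulting map is an odd continuous self-map of $S^n$ that factors through a contractible space, hence has degree zero, contradicting the odd-degree conclusion of Borsuk's theorem.

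The principal obstacle is carrying out this collapse and the ensuing degree computation rigorously in infinite dimensions while preserving $\mathbb{Z}_2$-equivariance; this is precisely the content of the abstract linking lemma taken from \cite{bartsch2012normalized}, which is the reason the authors cite that result rather than reproving it here.
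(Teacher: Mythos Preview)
The paper does not prove this lemma at all; it simply quotes \cite{bartsch2012normalized}, Lemma~2.3. So there is no argument to compare against, and your final paragraph already identifies this correctly.

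Your setup is exactly the right one: the auxiliary map $\phi_g=(P_n g,|\nabla g|_p^p-\rho_n^p)$, the equivalence $\phi_g=0\Leftrightarrow g\in B_n$, the odd/even splitting of the two components, and the sign change of the $\mathbb{R}$-component across $t=0$ and $t=1$ are precisely the ingredients used in the cited linking lemma.

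Where your sketch breaks down is the last sentence of the topological step. Collapsing the two boundary copies of $S^{n-1}$ in $[0,1]\times S^{n-1}$ yields the suspension $S^n$, which is not contractible, so the phrase ``factors through a contractible space, hence has degree zero'' is not justified. More seriously, the $\mathbb{Z}_2$-action you inherit on the suspension from $u\mapsto-u$ fixes both poles, and on the target $S^n\subset V_n\oplus\mathbb{R}$ the action $(v,r)\mapsto(-v,r)$ likewise fixes $(0,\pm1)$; neither action is free, so Borsuk's odd-degree theorem does not apply in the form you invoke. The actual proof in \cite{bartsch2012normalized} handles this via a more careful degree/homotopy argument for the common boundary data shared by all $g\in\Gamma_n$, rather than a direct Borsuk--Ulam collapse. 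Your instinct to defer to the reference for this step is therefore the right call; just be aware that the heuristic you wrote for it is not the mechanism that makes the argument work.
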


Following Lemma \ref{lem5.4}, we have
\begin{lem}\label{lem5.5} For each $n \in \mathbb{N}^+$,
$$
\gamma_n(c) := \underset{g \in \Gamma_n}{\text{inf}}\underset{(t,u) \in [0,1]\times (S_r(c)\cap V_n)}{\text{max}} I(g(t,u)) \geq \beta_n.
$$
\end{lem}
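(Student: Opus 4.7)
The plan is to deduce the inequality directly from the linking property (Lemma~\ref{lem5.4}) together with the definition of $\beta_n$ as the infimum of $I$ over $B_n$. The whole point of the min-max setup in (\ref{eq5.4})--(\ref{5.6}) and of the set $B_n$ is precisely that $B_n$ acts as a ``linking surface'' for the class $\Gamma_n$, so once we have Lemma~\ref{lem5.4} the estimate should be essentially immediate.

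More concretely, I would argue as follows. Fix $n \in \mathbb{N}^+$ and take any admissible path $g \in \Gamma_n$. By Lemma~\ref{lem5.4}, there exists a pair $(t_g,u_g) \in [0,1]\times (S_r(c)\cap V_n)$ with $g(t_g,u_g) \in B_n$. Since $\beta_n = \inf_{v \in B_n} I(v)$, this forces
\begin{equation*}
\underset{(t,u) \in [0,1]\times (S_r(c)\cap V_n)}{\max} I(g(t,u)) \;\geq\; I(g(t_g,u_g)) \;\geq\; \beta_n.
\end{equation*}
Taking the infimum over $g \in \Gamma_n$ then gives $\gamma_n(c) \geq \beta_n$, which is the desired inequality. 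Before passing to the infimum I would note that the ``$\max$'' is justified because $[0,1]\times (S_r(c)\cap V_n)$ is (after factoring out the $\mathbb{Z}_2$-action) compact in the finite-dimensional ambient space $V_n$ and $I \circ g$ is continuous, so the maximum is actually attained; in any case the argument goes through with $\sup$.

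Since Lemma~\ref{lem5.4} is quoted from \cite{bartsch2012normalized} and all the geometric/topological difficulty is packaged there, I do not expect any real obstacle in writing up Lemma~\ref{lem5.5} itself. The only small point to check is that the path $\overline{g}_n$ defined in (\ref{eq5.4}) indeed lies in $\Gamma_n$ (it is continuous, odd in $u$ because $k(\,\cdot\,,\theta)$ is linear in $u$, and satisfies the boundary conditions by construction), so $\Gamma_n \neq \emptyset$ and the infimum $\gamma_n(c)$ is well-defined and finite. With that remark in place the proof reduces to the two-line deduction above.
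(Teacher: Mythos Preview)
Your proposal is correct and matches the paper's approach: the paper simply writes ``Following Lemma~\ref{lem5.4}, we have'' before stating Lemma~\ref{lem5.5}, so the deduction from the linking property is exactly what is intended, and your two-line argument (together with the remark that $\overline{g}_n \in \Gamma_n$, which the paper also notes) is precisely the content.
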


On the other hand, $\forall g \in \Gamma_n$,$\forall u \in S_r(c)\cap V_n$, by (\ref{eq5.5}), we have
$$
|\nabla g(0,u)|_p = |\nabla \overline{g}_n(0,u)|_p <\rho_0^{\prime} < \rho_n,
$$
$$
|\nabla g(1,u)|_p= |\nabla \overline{g}_n(1,u)|_p > \rho_n >\rho_0.
$$
By the continuity of $|\nabla g(t,u)|_p$ with respect to $t$, there exits $t \in (0,1)$ such that $|\nabla g(t,u)|_p = \rho_0$. Then $\underset{(t,u) \in [0,1]\times (S_r(c)\cap V_n)}{\text{max}} I(g(t,u)) \geq \beta_0$. Since $g \in \Gamma_n$ is arbitrary, we deduce that
\begin{equation}\label{eq5.7}
\gamma_n(c) \geq \beta_0 > 0.
\end{equation}

Fix an arbitrary $n \in \mathbb{N}^+$ from now on. We adopt the approach developed by Jeanjean\cite{jeanjean1997existence}. First, we introduce the auxiliary functional
$$
\tilde{I} : S_r(c) \times \mathbb{R} \rightarrow \mathbb{R}, (u,\theta) \rightarrow I(k(u,\theta)),
$$
where $k(u,\theta)$ is given in (\ref{eq5.1}). Set
\begin{equation}\label{5.8}
\begin{split}
\tilde{\Gamma}_n := & \{\tilde{g} \in C([0,1]\times (S_r(c)\cap V_n),S_r(c)\times \mathbb{R}): \tilde{g}(t,-u)=-\tilde{g}(t,u), \\
& k(\tilde{g}(0,u))= \overline{g}_n(0,u),k(\tilde{g}(1,u))= \overline{g}_n(1,u), \forall u \in (S_r(c)\cap V_n)\},\\
\end{split}
\end{equation}
and
$$
\tilde{\gamma}_n(c) := \underset{\tilde{g} \in \tilde{\Gamma}_n}{\text{inf}}\underset{(t,u) \in [0,1]\times (S_r(c)\cap V_n)}{\text{max}} \tilde{I}(\tilde{g}(t,u)),
$$
we have
\begin{lem}\label{lem5.6}
$$
\tilde{\gamma}_n(c) = \gamma_n(c).
$$
\end{lem}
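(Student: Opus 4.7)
The plan is to prove the equality by a two-sided inequality exploiting the natural correspondence between admissible path families in $\Gamma_n$ and $\tilde\Gamma_n$. The crucial observation is that, by definition, $\tilde I = I \circ k$, so that energies along corresponding paths agree pointwise; consequently, the comparison of the min-max levels reduces to exhibiting suitable lifts and projections between $\Gamma_n$ and $\tilde\Gamma_n$ that preserve boundary conditions, continuity, and odd symmetry.

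For the inequality $\tilde\gamma_n(c) \leq \gamma_n(c)$, I would take an arbitrary $g \in \Gamma_n$ and lift it to $\tilde g(t,u) := (g(t,u), 0)$. Since $k(\cdot, 0)$ is the identity on $S_r(c)$, the boundary conditions $k(\tilde g(i,u)) = g(i,u) = \overline g_n(i,u)$ for $i = 0,1$ are automatic; continuity of $\tilde g$ is immediate; and the odd symmetry transfers since the zero second component is invariant. Hence $\tilde g \in \tilde\Gamma_n$ and $\tilde I(\tilde g(t,u)) = I(k(g(t,u),0)) = I(g(t,u))$, so taking the maximum over $(t,u)$ and then the infimum over $g \in \Gamma_n$ yields the inequality.

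For the reverse inequality $\gamma_n(c) \leq \tilde\gamma_n(c)$, I would take an arbitrary $\tilde g \in \tilde\Gamma_n$ and project it to $g(t,u) := k(\tilde g(t,u))$. By (\ref{eq5.2}) this path lies in $S_r(c)$; continuity of $g$ follows from the continuity of $k$ and $\tilde g$; and the boundary conditions $g(i,u) = \overline g_n(i,u)$ for $i = 0,1$ hold directly by the definition of $\tilde\Gamma_n$. The odd symmetry $g(t,-u) = -g(t,u)$ transfers provided the $\mathbb Z_2$-action on $S_r(c) \times \mathbb R$ is chosen compatibly with that on $S_r(c)$ so that $k$ is equivariant. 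Thus $g \in \Gamma_n$, and $I(g(t,u)) = \tilde I(\tilde g(t,u))$ yields the bound on taking max and inf. The only delicate point is precisely the compatibility of antipodal conventions on the auxiliary space $S_r(c) \times \mathbb R$; once fixed so that $k$ intertwines the two $\mathbb Z_2$-actions, the whole argument reduces to bookkeeping around the defining identity $\tilde I(u,\theta) = I(k(u,\theta))$.
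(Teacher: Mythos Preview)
Your proposal is correct and follows essentially the same approach as the paper: the paper defines the maps $\varphi(g)=(g,0)$ from $\Gamma_n$ to $\tilde\Gamma_n$ and $\psi(\tilde g)=k\circ\tilde g$ from $\tilde\Gamma_n$ to $\Gamma_n$, observes that $\tilde I(\varphi(g))=I(g)$ and $I(\psi(\tilde g))=\tilde I(\tilde g)$, and concludes immediately. Your version is a more detailed write-up of the same two-sided comparison, and your remark on the compatibility of the $\mathbb Z_2$-actions (so that $k$ is equivariant) makes explicit a point the paper leaves implicit.
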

\begin{proof}
It follows immediately from the definition of $\tilde{\gamma}_n(c)$ and $\gamma_n(c)$ along with the fact that the maps
$$
\varphi: \Gamma_n \rightarrow \tilde{\Gamma}_n, g \rightarrow \varphi(g) := (g,0)
$$
and
$$
\psi :\tilde{\Gamma}_n \rightarrow \Gamma_n, \tilde{g} \rightarrow \psi(\tilde{g}) := k\circ \tilde{g}
$$
satisfy
$$
\tilde{I}(\varphi(g)) = I(g) \enspace and \enspace I(\psi(\tilde{g})) = \tilde{I}(\tilde{g}).
$$
\end{proof}

Therefore, it follows from (\ref{eq5.7}), Lemma \ref{lem5.5} and Lemma \ref{lem5.6} that
\begin{equation}
\begin{split}
& \tilde{\gamma}_n(c) = \gamma_n(c) \\
&\geq \text{max}\{ \beta_n,\beta_0 \}\\
& > \text{max}\left\{\underset{(t,u) \in [0,1]\times (S_r(c)\cap V_n)}{\text{max}} I(\overline{g}_n(0,u)),\underset{(t,u) \in [0,1]\times (S_r(c)\cap V_n)}{\text{max}} I(\overline{g}_n(1,u))\right\}.\\
\end{split}
\nonumber
\end{equation}

\begin{lem}\label{lem5.7} There exists a Palais-Smale sequence $\{u_k\} \subset S_r(c)$ for $I|_{S_r(c)}$ at level $\gamma_n(c)$ such that
$$
P(u_k) \rightarrow 0, \enspace as \enspace k \rightarrow +\infty.
$$
\end{lem}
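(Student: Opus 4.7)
The plan is to apply the classical Jeanjean argument \cite{jeanjean1997existence} to the auxiliary functional $\tilde I$ defined on the product manifold $S_r(c)\times\mathbb{R}$. The crucial observation is the scaling identity $\partial_\theta \tilde I(u,\theta)=P(k(u,\theta))$, which automatically converts a Palais-Smale sequence for $\tilde I$ at the min-max level into one for $I|_{S_r(c)}$ that additionally satisfies $P\to 0$.

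First, by Lemma \ref{lem5.6} and the strict inequality noted immediately afterwards, $\tilde{\gamma}_n(c)=\gamma_n(c)$ is a genuine min-max value of $\tilde I$ over the odd class $\tilde\Gamma_n$, strictly exceeding the supremum of $\tilde I$ on the endpoints $\{0,1\}\times(S_r(c)\cap V_n)$. I would then invoke a $\mathbb{Z}/2$-equivariant deformation argument (or equivalently Ekeland's variational principle applied to a near-optimal path as in \cite{jeanjean1997existence}): if no such PS sequence existed at level $\tilde\gamma_n(c)$, then an equivariant pseudo-gradient flow (cut off so as to be stationary near the endpoints, where $\tilde I$ is already strictly below the critical level) would push any near-optimal $\tilde g\in\tilde\Gamma_n$ uniformly below $\tilde\gamma_n(c)-\varepsilon$, contradicting the definition of $\tilde\gamma_n(c)$. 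This yields a sequence $(w_k,s_k)\in S_r(c)\times\mathbb{R}$ with $\tilde I(w_k,s_k)\to\tilde\gamma_n(c)$ and $\|d(\tilde I|_{S_r(c)\times\mathbb{R}})(w_k,s_k)\|\to 0$.

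Second, setting $u_k:=k(w_k,s_k)\in S_r(c)$, the $\theta$-component of the PS condition immediately gives $P(u_k)=\partial_\theta\tilde I(w_k,s_k)\to 0$ together with $I(u_k)=\tilde I(w_k,s_k)\to\gamma_n(c)$. For the $u$-component, I would use that for each fixed $s_k$ the map $k(\cdot,s_k)$ is a diffeomorphism of $S_r(c)$ whose differential sends $T_{w_k}S_r(c)$ bijectively onto $T_{u_k}S_r(c)$ and preserves the $L^p$-norm; the chain rule then translates $\|\partial_u \tilde I(w_k,s_k)\|_*\to 0$ into $\|dI|_{S_r(c)}(u_k)\|_*\to 0$. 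This yields the desired PS sequence.

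The main technical obstacle is the construction of the $\mathbb{Z}/2$-equivariant pseudo-gradient for $\tilde I$ on the product manifold while keeping the endpoint constraint of $\tilde\Gamma_n$ untouched; a standard cut-off near the endpoints handles this, provided Step 1 is in hand. A secondary subtlety is controlling the norm-equivalence between $\|dI|_{S_r(c)}(u_k)\|_*$ and $\|\partial_u\tilde I(w_k,s_k)\|_*$ as a function of $s_k$; here one argues that $\{s_k\}$ is bounded, since otherwise (\ref{eq5.3}) combined with the positivity of $\gamma_n(c)$ and the asymptotic behavior of $\tilde I(w,\theta)$ as $\theta\to\pm\infty$ would contradict $\tilde I(w_k,s_k)\to\gamma_n(c)>0$.
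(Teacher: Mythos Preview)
Your proposal is correct and follows exactly the approach the paper intends: the paper sets up the Jeanjean auxiliary functional $\tilde I$, the map $k$, the class $\tilde\Gamma_n$, and Lemma \ref{lem5.6} precisely for this purpose, and then omits the details of Lemma \ref{lem5.7} by citing Lemma 6.8 of \cite{zhang2022normalized}, which carries out the argument you outline. One minor refinement: your justification for the boundedness of $\{s_k\}$ via (\ref{eq5.3}) is not airtight since $w_k$ varies along the sequence, but the standard fix---taking the near-optimal path in $\tilde\Gamma_n$ of the form $\varphi(g)=(g,0)$ with $g\in\Gamma_n$ (as in the proof of Lemma \ref{lem5.6}) so that Ekeland's principle yields $(w_k,s_k)$ close to this path and hence $s_k\to 0$---resolves this immediately.
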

\begin{proof}
Our proof is similar to the Lemma 6.8 of \cite{zhang2022normalized},  so we omit it.
\end{proof}

\begin{lem}\label{lem5.8} Let $\{u_k\} \subset S_r(c)$ be the Palais-Smale sequence obtained in Lemma \ref{lem5.7}. Then there exist $u \in W_r^{1,p}(\mathbb{R}^3)$ and $\{\lambda_k\} \subset \mathbb{R}$ such that up to a subsequence, as $k \rightarrow +\infty$
\\(i) $u_k \rightharpoonup u \ne 0$ in $W_r^{1,p}(\mathbb{R}^3)$;
\\(ii) $\lambda_k \rightarrow \lambda < 0$ in $\mathbb{R}$;
\\(iii) $|\nabla u_k|_p \rightarrow |\nabla u|_p$;
\\(iv) $-(a+b|\nabla u_k|_p^p)\varDelta_pu_k - |u_k|^{q-2}u_k - \lambda_k \left| u_k \right|^{p-2}u_k \rightarrow 0$ in $\left(W_r^{1,p}(\mathbb{R}^3)\right)^{\prime}$;
\\(v) $-(a+b|\nabla u|_p^p)\varDelta_pu - |u|^{q-2}u - \lambda \left| u \right|^{p-2}u = 0$ in $\left(W_r^{1,p}(\mathbb{R}^3)\right)^{\prime}$;
\\(vi) $u_k \rightarrow u$ in $W_r^{1,p}(\mathbb{R}^3)$.
\end{lem}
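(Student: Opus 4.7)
The plan is to proceed through (i)--(vi) in three passes, using the Lemma \ref{lem4.2} coercivity, Lemma \ref{lem2.4}, and the monotonicity inequalities (\ref{eq2.5})--(\ref{eq2.6}).

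First I would establish boundedness and part (i). The identity
\begin{equation*}
I(u_k) - \frac{p}{3(q-p)}P(u_k) = a\left(\frac{1}{p}-\frac{p}{3(q-p)}\right)|\nabla u_k|_p^p + b\left(\frac{1}{2p}-\frac{p}{3(q-p)}\right)|\nabla u_k|_p^{2p}
\end{equation*}
from Lemma \ref{lem4.2}, together with $I(u_k) \to \gamma_n(c)$, $P(u_k) \to 0$, and the positivity of both bracketed coefficients (since $q > p + \tfrac{2p^2}{3}$), forces $\{|\nabla u_k|_p\}$ to be bounded; with $|u_k|_p = c$ this gives boundedness in $W_r^{1,p}(\mathbb{R}^3)$. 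Pass to a subsequence: $u_k \rightharpoonup u$ in $W_r^{1,p}$, $u_k \to u$ a.e., $u_k \to u$ in $L^s(\mathbb{R}^3)$ for every $s \in (p, p^{\ast})$ by Strauss' radial compactness (in particular for $s = q$), and $|\nabla u_k|_p^p \to A$ for some $A \geq 0$. If $u \equiv 0$ then $|u_k|_q \to 0$, so $P(u_k) \to 0$ forces $|\nabla u_k|_p \to 0$ and $I(u_k) \to 0$, contradicting $\gamma_n(c) \geq \beta_0 > 0$ from Lemma \ref{lem5.3}; this yields (i).

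Next I would produce the multipliers and obtain (ii) and (iv) together. Set $\lambda_k := c^{-p}\langle I'(u_k), u_k\rangle$; Lemma \ref{lem2.4} gives $I'(u_k) - \lambda_k |u_k|^{p-2}u_k \to 0$ in $(W_r^{1,p}(\mathbb{R}^3))'$, which unpacked is exactly (iv). Expanding $\lambda_k c^p$ and substituting $P(u_k) = o(1)$,
\begin{equation*}
\lambda_k c^p = a|\nabla u_k|_p^p + b|\nabla u_k|_p^{2p} - |u_k|_q^q = -\frac{pq - 3q + 3p}{pq}\,|u_k|_q^q + o(1).
\end{equation*}
Because $p < q < p^{\ast} = \tfrac{3p}{3-p}$ we have $pq - 3q + 3p > 0$, and by the previous step $|u_k|_q^q \to |u|_q^q > 0$; hence $\lambda_k \to \lambda < 0$, giving (ii).

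The heart of the proof is upgrading to strong convergence, which yields (iii), (vi) and, by passage to the limit, (v). Replace $|\nabla u_k|_p^p$ by $A$ in (iv)---the error is $o(1)$ in $(W_r^{1,p})'$ since $\varDelta_p u_k$ is bounded there---and test the resulting relation against $u_k - u$. The $L^q$-compactness kills the $|u_k|^{q-2}u_k$ term, and weak convergence gives $\int|\nabla u|^{p-2}\nabla u \cdot \nabla(u_k-u)\,dx \to 0$. The delicate piece is
\begin{equation*}
\int_{\mathbb{R}^3} |u_k|^{p-2}u_k (u_k - u)\,dx = c^p - \int_{\mathbb{R}^3}|u_k|^{p-2}u_k\, u\,dx \longrightarrow c^p - |u|_p^p,
\end{equation*}
via a.e.\ convergence and $L^{p'}$-boundedness, and this does not vanish a priori because Strauss' compactness is unavailable at $s = p$---this is the main technical obstacle. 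What remains is
\begin{equation*}
(a+bA)\int_{\mathbb{R}^3}\bigl(|\nabla u_k|^{p-2}\nabla u_k - |\nabla u|^{p-2}\nabla u\bigr)\cdot\nabla(u_k - u)\,dx \longrightarrow \lambda\bigl(c^p - |u|_p^p\bigr).
\end{equation*}
The resolution is sign compatibility: the left-hand side is nonnegative by (\ref{eq2.5})--(\ref{eq2.6}), while the right-hand side is nonpositive since $\lambda < 0$ and $|u|_p \leq c$, so both limits equal zero. This forces $|u|_p = c$ (hence $u_k \to u$ in $L^p$ by norm plus weak convergence in the uniformly convex space $L^p$) and, via (\ref{eq2.5})--(\ref{eq2.6}), $\nabla u_k \to \nabla u$ in $L^p(\mathbb{R}^3)$; together these give (vi) and in particular (iii), and passing to the limit in (iv) then delivers (v). As a bonus, the mass preservation $|u|_p = c$ places $u$ back on the constraint $S_r(c)$, which is needed in the subsequent application of the lemma.
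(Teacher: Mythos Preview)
Your argument is correct, and it takes a genuinely different route from the paper in the strong-convergence step. The paper first establishes $\nabla u_k \to \nabla u$ a.e.\ by a local cut-off argument (as in the proof of Theorem~\ref{thm1.1}), then invokes a weak-$\ast$ limit result to deduce the limiting equation \eqref{eq5.14} for $u$, and finally compares the Nehari-type identities \eqref{eq5.15}--\eqref{eq5.16} for $u_k$ and $u$; the sign of $\lambda$ then splits the resulting relation $(a+bA)(|\nabla u_k|_p^p-|\nabla u|_p^p)-\lambda(|u_k|_p^p-|u|_p^p)=o(1)$ into two vanishing pieces by weak lower semicontinuity. You bypass the a.e.\ step entirely: testing the approximate equation directly against $u_k-u$ and subtracting the weak-limit contribution of $\nabla u$ leaves the monotone gradient term on one side and $\lambda(c^p-|u|_p^p)$ on the other, and the sign clash forces both to zero at once. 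Your route is shorter and more self-contained (it needs only \eqref{eq2.5}--\eqref{eq2.6} and the sign of $\lambda$), while the paper's route has the by-product of yielding the limit equation (v) before strong convergence is known, which can be convenient in other settings; here either approach closes the lemma.
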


\begin{proof}
Since $I(u_k) \rightarrow \gamma_n(c)$ and $P(u_k) \rightarrow 0$ as $k \rightarrow +\infty$, we have
\begin{equation}
\begin{split}
\gamma_n(c) &=I(u_k) + o(1) \\
& = I(u_k) - \frac{p}{3(q-p)}P(u_k) + o(1) \\
& = a\left(\frac{1}{p}- \frac{p}{3(q-p)}\right)\left| \nabla u_k \right|_{p}^{p} + b\left(\frac{1}{2p}-\frac{p}{3(q-p)}\right)\left| \nabla u_k \right|_{p}^{2p} + o(1).\\
\end{split}
\nonumber
\end{equation}
Thus $\{u_k\}$ is bounded in $W_r^{1,p}(\mathbb{R}^3)$. So, up to a subsequence, there exist $u \in W_r^{1,p}(\mathbb{R}^3)$ such that
\begin{equation}\label{eq5.9}
\begin{cases}
u_k \rightharpoonup u, & \text{in} \enspace W_{r}^{1,p}(\mathbb{R}^3),\\
u_k \rightarrow u, & \text{in} \enspace L^{q}(\mathbb{R}^3),\\
u_k(x) \rightarrow u(x), & \text{a.e. in} \enspace \mathbb{R}^3.\\
\end{cases}
\end{equation}
If $u = 0$, we have $|u_k|_q^q = o(1)$. Thus we obtain $|\nabla u_k|_p^p = o(1)$ because $P(u_k) = o(1)$. As a consequence, $I(u_k) = o(1)$, which contradicts with (\ref{eq5.7}). Thus (i) is obtained. By Lemma \ref{lem2.4}, we  have
$$
-(a+b|\nabla u_k|_p^p)\varDelta_pu_k - |u_k|^{q-2}u_k - \lambda_k \left| u_k \right|^{p-2}u_k \rightarrow 0
$$
in $\left(W_r^{1,p}(\mathbb{R}^3)\right)^{\prime}$ as $k \rightarrow +\infty$, where
\begin{equation}\label{eq5.10}
\lambda_k = \frac{1}{c^p}\left(a|\nabla u_k|_p^p+b|\nabla u_k|_p^{2p}-|u_k|_q^q\right),
\end{equation}
then (iv) is proved. Since $\{u_k\}$ is bounded in $W_r^{1,p}(\mathbb{R}^3)$, there exits $\lambda \in \mathbb{R}$ such that $\lambda_k \rightarrow \lambda$ as $k \rightarrow +\infty$ up to a subsequence. Furthermore,
\begin{equation}\label{eq5.11}
\begin{split}
\lambda & = \underset{k \rightarrow +\infty}{\text{lim}}\lambda_k\\
& = \underset{k \rightarrow +\infty}{\text{lim}} \frac{1}{c^p}\left(a|\nabla u_k|_p^p+b|\nabla u_k|_p^{2p}-|u_k|_q^q\right)\\
& \leq \underset{k \rightarrow +\infty}{\text{lim}} \frac{1}{c^p}\left(a|\nabla u_k|_p^p+b|\nabla u_k|_p^{2p}-\frac{3(q-p)}{qp}|u_k|_q^q\right)\\
& = \underset{k \rightarrow +\infty}{\text{lim}} \frac{1}{c^p}P(u_k)\\
& =0.\\
\end{split}
\end{equation}
Since $|u_k|_q^q \rightarrow |u|_q^q \ne 0$, we obtain $\lambda < 0$. Thus (ii) is proved. Next we prove that (iii) holds. Suppose that $\underset{k \rightarrow +\infty}{\text{lim}}|\nabla u_k|_p^p = A \geq 0$. Then, from (ii) and (iv), we have
\begin{equation}\label{eq5.12}
-(a+bA)\varDelta_pu_k - |u_k|^{q-2}u_k - \lambda \left| u_k \right|^{p-2}u_k \rightarrow 0 \enspace in \enspace \left(W_r^{1,p}(\mathbb{R}^3)\right)^{\prime}, \enspace as \enspace k \rightarrow +\infty.
\end{equation}
Similar to the Proof of Theorem \ref{thm1.1}, we can obtain that 
\begin{equation}\label{eq5.13}
\nabla u_k \rightarrow \nabla u\enspace \text{a.e.} \enspace \text{on} \enspace \mathbb{R}^3.
\end{equation}
In view of the uniqueness of weak limit, using Proposition 5.4.7 of \cite{willem2023functional}, by (\ref{eq5.9}), (\ref{eq5.12}) and (\ref{eq5.13}), we have
\begin{equation}\label{eq5.14}
-(a+bA)\varDelta_pu - |u|^{q-2}u - \lambda \left| u \right|^{p-2}u = 0 \enspace in \enspace (W_r^{1,p}(\mathbb{R}^3))^{\prime}.
\end{equation}
From (\ref{eq5.12}) and (\ref{eq5.14}), we have
\begin{equation}\label{eq5.15}
(a+bA)|\nabla u_k|_p^p - |u_k|_q^q - \lambda | u_k |_p^p = o(1),
\end{equation}
and
\begin{equation}\label{eq5.16}
(a+bA)|\nabla u|_p^p - |u|_q^q - \lambda | u |_p^p = 0.
\end{equation}
Combining (\ref{eq5.15}) with (\ref{eq5.16}), we obtain
$$
(a+bA)(|\nabla u_k|_p^p - |\nabla u|_p^p) - \lambda (|u_k|_p^p-|u|_p^p)= (|u_k|_q^q-|u|_q^q)+o(1).
$$
Since $u_k \rightarrow u$ in $L^q(\mathbb{R}^3)$,
$$
(a+bA)(|\nabla u_k|_p^p - |\nabla u|_p^p) - \lambda (|u_k|_p^p-|u|_p^p)= o(1).
$$
Since $\lambda < 0$, by the weakly lower semicontinuity of norm, we obtain 
\begin{equation}\label{eq5.17}
|\nabla u_k|_p^p - |\nabla u|_p^p = o(1), |u_k|_p^p-|u|_p^p = o(1).
\end{equation}
Thus (iii) is proved and (v) is easily deduced by (\ref{eq5.14}) and (\ref{eq5.17}). Finally, by (\ref{eq5.9}) and (\ref{eq5.17}), we have $u_k \rightarrow u$ in $W_r^{1,p}(\mathbb{R}^3)$ as $k \rightarrow +\infty$.
\end{proof}

\begin{proof}[Proof of Theorem \ref{thm1.4}]
By Lemma \ref{lem5.8}, we get that for any fixed $c > 0$, (\ref{eq1.1})-(\ref{eq1.2}) has a sequence of couples of normalized solutions $\{(u_n,\lambda_n)\} \subset S_r(c) \times \mathbb{R}^-$ with $I(u_n) = \gamma_n(c)$ for each $n \in \mathbb{N}^+$. By Lemma \ref{lem2.2}, we know that $P(u_n)=0$ for each $n \in \mathbb{N}^+$. Thus we have
\begin{equation}
\begin{split}
\gamma_n(c) &=I(u_n) = I(u_n) - \frac{p}{3(q-p)}P(u_n)\\
& = a\left(\frac{1}{p}- \frac{p}{3(q-p)}\right)\left| \nabla u_n \right|_{p}^{p} + b\left(\frac{1}{2p}-\frac{p}{3(q-p)}\right)\left| \nabla u_n \right|_{p}^{2p}.\\
\end{split}
\nonumber
\end{equation}
Since $\gamma_n(c) \geq \beta_n \rightarrow +\infty$ as $n \rightarrow +\infty$, we conclude that $\Vert u_n \Vert_{ W_{r}^{1,p}(\mathbb{R}^3) } \rightarrow +\infty$ and $I(u_n) \rightarrow +\infty$ as $n \rightarrow +\infty$.
\end{proof}

Next, we study the asymptotic behavior of solutions obtained in Theorem \ref{thm1.4} when $b \rightarrow 0^+$. Fix a $c > 0$ from now on. Since $b$ is a variable in this case, we replace $I$, $P$, $\gamma_n(c)$ by $I_b$, $P_b$, $\gamma_n^b(c)$, respectively.  For $b > 0$, we denote $\{(u_n^b,\lambda_n^b)\} \subset S_r(c) \times \mathbb{R}^-$ as a sequence of couples of weak solutions which we obtained in Theorem \ref{thm1.4}. Then, it follows that, for any $n \in \mathbb{N}^+$,
\begin{equation}\label{eq5.18}
I_b(u_n^b) = \gamma_n^b(c).
\end{equation}
Recalling the proof of Lemma \ref{lem5.2}, we see that $\beta_n$ is independent of $b$. Thus, we have
\begin{equation}\label{eq5.19}
\gamma_n^b(c) \geq \beta_n > 0,
\end{equation}
for any $b > 0$.

\begin{proof}[Proof of Theorem \ref{thm1.5}]
Firstly, we claim that for any $\{b_m\} \rightarrow 0^+(m \rightarrow +\infty)$, $\{(u_n^{b_m}\}_{m \in \mathbb{N}^+} \subset S_r(c)$ is bounded in $W_r^{1,p}(\mathbb{R}^3)$. Without loss of generality, we suppose that $b_m \leq 1$ for any $m \in \mathbb{N}^+$. Since $V_n$ is finite-dimensional, we obtain that for each $n \in \mathbb{N}^+$,
\begin{equation}
\begin{split}
\gamma_n^{b_m}(c) &:= \underset{g \in \Gamma_n}{\text{inf}}\underset{(t,u) \in [0,1]\times (S_r(c)\cap V_n)}{\text{max}} I_{b_m}(g(t,u)) \\
& \leq \underset{g \in \Gamma_n}{\text{inf}}\underset{(t,u) \in [0,1]\times (S_r(c)\cap V_n)}{\text{max}} I_1(g(t,u)) := \alpha_n < +\infty. \\
\end{split}
\nonumber
\end{equation}
Since $\{(u_n^{b_m},\lambda_n^{b_m})\} \subset S_r(c) \times \mathbb{R}^-$ is a sequence of couples of weak solutions to (\ref{eq1.1})-(\ref{eq1.2}) with $b=b_m$ and
\begin{equation}\label{eq5.20}
\lambda_n^{b_m} = \frac{1}{c^p}\left(a|\nabla u_n^{b_m}|_p^p+b_m|\nabla u_n^{b_m}|_p^{2p}-|u_n^{b_m}|_q^q\right).
\end{equation}
By Lemma \ref{lem2.3}, we deduce that $P_{b_m}(u_n^{b_m}) = 0$. Then
\begin{equation}
\begin{split}
\gamma_n^{b_m}(c) = I_{b_m}(u_n^{b_m}) & = I_{b_m}(u_n^{b_m}) - \frac{p}{3(q-p)}P_{b_m}(u_n^{b_m})\\
& = a\left(\frac{1}{p}- \frac{p}{3(q-p)}\right)\left| \nabla u_n^{b_m} \right|_{p}^{p} + b\left(\frac{1}{2p}-\frac{p}{3(q-p)}\right)\left| \nabla u_n^{b_m} \right|_{p}^{2p},\\
\end{split}
\nonumber
\end{equation}
we conclude from $\gamma_n^{b_m}(c) < +\infty$ that $\{(u_n^{b_m})\}_{m \in \mathbb{N}^+}$ is bounded in $W_r^{1,p}(\mathbb{R}^3)$. Combining $P_{b_m}(u_n^{b_m}) = 0$ with (\ref{eq5.20}), $\{\lambda_n^{b_m}\}$ is bounded in $\mathbb{R}$.
Then there exists a subsequence of $\{b_m\}$, still denoted by $\{b_m\}$, and $\lambda_n^0 \leq 0$ such that as $m \rightarrow +\infty$, $\lambda_n^{b_m} \rightarrow \lambda_n^0$ and
\begin{equation}\label{eq5.21}
\begin{cases}
u_n^{b_m} \rightharpoonup u_n^0, & \text{in} \enspace W_{r}^{1,p}(\mathbb{R}^3),\\
u_n^{b_m} \rightarrow u_n^0, & \text{in} \enspace L^{q}(\mathbb{R}^3),\\
u_n^{b_m}(x) \rightarrow u_n^0(x), & \text{a.e. in} \enspace \mathbb{R}^3.\\
\end{cases}
\end{equation}
Since $\{(u_n^{b_m},\lambda_n^{b_m})\}$ is a sequence of couples of weak solutions to (\ref{eq1.1})-(\ref{eq1.2}), by $b_m \rightarrow 0$ and $\lambda_n^{b_m} \rightarrow \lambda_n^0$ as $m \rightarrow +\infty$, we have
\begin{equation}\label{eq5.22}
-a\varDelta_p u_n^{b_m} - |u_n^{b_m}|^{q-2}u_n^{b_m} - \lambda_n^0|u_n^{b_m}|^{p-2}u_n^{b_m} = o_m(1).
\end{equation}
Similar to the proof of Theorem \ref{thm1.1}, we can get, as $m \rightarrow +\infty$,
\begin{equation}\label{eq5.23}
\nabla u_n^{b_m}(x) \rightarrow \nabla u_n^0(x), \text{a.e.in} \enspace \mathbb{R}^3.
\end{equation}
In view of the uniqueness of weak limit, using Proposition 5.4.7 of \cite{willem2023functional}, by (\ref{eq5.21}), (\ref{eq5.22}) and (\ref{eq5.23}), we have
\begin{equation}\label{eq5.24}
-a\varDelta_p u_n^0 - |u_n^0|^{q-2}u_n^0 - \lambda_n^0|u_n^0|^{p-2}u_n^0 = 0.
\end{equation}
By (\ref{eq5.22}) and (\ref{eq5.24}), we have
$$
a|\nabla u_n^{b_m}|_p^p - |u_n^{b_m}|_q^q - \lambda_n^0|u_n^{b_m}|_p^p = o_m(1),
$$
and
$$
a|\nabla u_n^0|_p^p - |u_n^0|_q^q - \lambda_n^0|u_n^0|_p^p = 0.
$$
Thus we obtain
$$
a(|\nabla u_n^{b_m}|_p^p - |\nabla u_n^0|_p^p)- \lambda_n^0(|u_n^{b_m}|_p^p - |u_n^0|_p^p) = |u_n^{b_m}|_q^q - |u_n^0|_q^q + o_m(1),
$$
by \ref{eq5.21}, we have
\begin{equation}\label{eq5.25}
a(|\nabla u_n^{b_m}|_p^p - |\nabla u_n^0|_p^p)- \lambda_n^0(|u_n^{b_m}|_p^p - |u_n^0|_p^p) = o_m(1).
\end{equation}
Since  $\lambda_n^0 \leq 0$, by the weakly lower semicontinuity of norm, we have 
\begin{equation}\label{eq5.26}
|\nabla u_n^{b_m}|_p^p - |\nabla u_n^0|_p^p = o_m(1).
\end{equation}
If $\lambda_n^0 = 0$, by (\ref{eq5.24}), $u_n^0$ is a weak solution to $-a\varDelta_p u_n^0 = |u_n^0|^{q-2}u_n^0$. By the Nehari and Pohozaev identities, we infer that $u_n^0 = 0$. Then, by (\ref{eq5.21}), (\ref{eq5.26}), as $m \rightarrow +\infty$, we have
$$
0 < \beta_n \leq \gamma_n^{b_m}(c) = I_{b_m}(u_n^{b_m}) \rightarrow I_0(u_n^0) = 0,
$$
which is a contradiction. Thus $\lambda_n^0 < 0$ and according to (\ref{eq5.25}), we have $|u_n^{b_m}|_p^p - |u_n^0|_p^p = o_m(1)$ so that $u_n^0 \in S_r(c)$. Together with (\ref{eq5.21}) and (\ref{eq5.26}), we conclude that $u_n^{b_m} \rightarrow u_n^0$ in $W_r^{1,p}(\mathbb{R}^3)$ as $m \rightarrow +\infty$. Moreover, by (\ref{eq5.24}), $\{u_n^0, \lambda_n^0\} \subset S_r(c) \times \mathbb{R}^-$ is a sequence of couples of weak solutions to the following equation
$$
-a\varDelta_p u - \lambda|u|^{p-2}u= |u|^{q-2}u, \enspace 
\text{in} \enspace \mathbb{R}^3.
$$
The proof is completed.
\end{proof}

\section{Proof of Theorem 1.6 and Theorem 1.8}
\begin{proof}[Proof of Theorem \ref{thm1.6}]
(1) Since $q < p+\frac{p^2}{3}$, we can easily check that $f_q(t)$ $(0,+\infty)$ attains its minimum at a unique point, denoted by $t_q$. Therefore, similar to (\ref{eq3.1}), by Lemma \ref{lem2.1}, we obtain 
\begin{equation}\label{eq6.1}
i(c) = \underset{u \in S(c)}{I(u)} \geq \underset{t > 0}{\text{inf}}f_q(t) = f_q(t_q),
\end{equation}
by setting $t = |\nabla u|_p^p$.
On the other hand, set
$$
u_{\mu}(x) = \frac{c\mu^{\frac{3}{p}}}{|Q|_p}Q(\mu x),
$$
where $\mu > 0$ will be determined later. Then, $u_{\mu} \in S(c)$ and
\begin{equation}\label{eq6.2}
I(u_{\mu}) = \frac{a}{p}(c\mu)^p + \frac{b}{2p}(c\mu)^{2p}-\frac{c^{q-\frac{3(q-p)}{p}}}{p|Q|_p^{q-p}}(c\mu)^{\frac{3(q-p)}{p}} = f_q((c\mu)^p).
\end{equation}
Choosing $\mu = \frac{t_q^{\frac{1}{p}}}{c}$, i.e., $(c\mu)^p=t_q$, it follows from (\ref{eq6.2}) that $i(c) \leq I(u_{\mu}) = f_q(t_q)$. Together with (\ref{eq6.1}), we deduce that
\begin{equation}\label{eq6.3}
i(c) = f_q(t_q) = \underset{t > 0}{\text{inf}}f_q(t),
\end{equation}
and $u_c = u_{\mu}$ with $\mu = \frac{t_q^{\frac{1}{p}}}{c}$ is a minimizer of $i(c)$. 
\par
Next, we show that, up to translations, $u_c$ is the unique minimizer of $i(c)$. Indeed, if $u_0 \in S(c)$ is a minimizer, it then follows from (\ref{eq3.1}) that $I(u_0) \geq f_q(t_0)$, with $t_0 = |\nabla u_0|_p^p$, where the equality holds if and only if $u_0$ is an optimizer of (\ref{eq2.1}). This and (\ref{eq6.3}) imply that $t_0 = t_p$ and $f_q(t_0) = I(u_0)$. Thus, by Lemma \ref{lem2.1}, up to translations, $u_0$ must be the form of $u_0 = \alpha Q(\beta x)$. Using $|u_0|_p^p = c^p$, $|\nabla u_0|_p^p = t_q$ and (\ref{eq3.3}), we obtain that $
\alpha =\frac{c}{|Q|_P}\left(\frac{t_q^{\frac{1}{p}}}{c}\right)^{\frac{3}{p}} $ and $\beta = \frac{t_q^{\frac{1}{p}}}{c}$. Hence, $u_0 = u_c$.
\par
(2) When $q = p+ \frac{p^2}{3}$,
\begin{equation}\label{eq6.4}
f_q(t) = \frac{1}{p}\left(a-\frac{c^{\frac{p^2}{3}}}{p|Q|_p^{\frac{p^2}{3}}}\right)t+\frac{b}{2p}t^2.
\end{equation}
If $c \leq a^{\frac{3}{p^2}}\left| Q \right|_p$, by Theorem \ref{thm1.1}, $i(c)$ has no minimizer. If $c> a^{\frac{3}{p^2}}\left| Q \right|_p$, from (\ref{eq6.4}), we know that $f_q(t)$ $(t \in (0,+\infty))$ attains its minimum at the unique point $t_q = \frac{c^{\frac{p^2}{3}}-ap|Q|_p^{\frac{p^2}{3}}}{bp|Q|_p^{\frac{p^2}{3}}}$. Similar to the arguments of parts (1), we can prove that up to translations $u_c = \frac{c\mu^{\frac{3}{p}}}{|Q|_p}Q(\mu x)$ with $\mu = \frac{t_q^{\frac{1}{p}}}{c}$ is the unique minimizer of $i(c)$. Moreover, we have $i(c) = -\frac{b}{2p}\left(\frac{c^{\frac{p^2}{3}}-ap|Q|_p^{\frac{p^2}{3}}}{bp|Q|_p^{\frac{p^2}{3}}}\right)^2$.
\end{proof}

\begin{proof}[Proof of Theorem \ref{thm1.8}]
Firstly, similar to Lemma \ref{lem5.3}, from Definition \ref{defn1.7}, we can prove that there exists $K(c) > 0$ which can be chosen small enough such that, $I(\cdot)$ admits mountain pass geometry on $S(c)$ if (\ref{eq1.12}) holds. We assume that $K(c) < \overline{t}_q$.
\par
For any $h(s) \in \Gamma(c)$, by Lemma \ref{lem2.1}, we have
\begin{equation}\label{eq6.5}
I(h(s)) \geq f_q(|\nabla h(s)|_p^p),
\end{equation}
where equality holds if and only if $h(s) \in S(c)$ is an optimizer of (\ref{eq2.1}), i.e., up to translations,
\begin{equation}\label{eq6.6}
(h(s))(x) = \frac{c\mu^{\frac{3}{p}}}{|Q|_p}Q(\mu x) \enspace \text{for some} \enspace \mu > 0.
\end{equation}
Since $h(0) \in A_{K(c)}$ with $K(c) < \overline{t}_q$, and note that $f_q(t) > 0$ for any $t \in (0,\overline{t}_q]$, we have
\begin{equation}\label{eq6.7}
|\nabla h(0)|_p^p < \overline{t}_q < |\nabla h(1)|_p^p.
\end{equation}
By the continuity of $|\nabla h(s)|_p^p$ respect to $s$, we deduce from (\ref{eq6.5}) and (\ref{eq6.7}) that
\begin{equation}\label{eq6.8}
\underset{s\in [0,1]}{\text{max}} I(h(s)) \geq f_q(\overline{t}_q) = \underset{t>0}{\text{max}}f_q(t).
\end{equation}
Thus,
\begin{equation}\label{eq6.9}
\gamma(c) = \underset{h \in \Gamma(c)}{\text{inf}}\underset{s\in [0,1]}{\text{max}} I(h(s)) \geq f_q(\overline{t}_q).
\end{equation}
On the contrary, let $u_{\mu}(x) = \frac{c\mu^{\frac{3}{p}}}{|Q|_p}Q(\mu x)$ with $\mu = \overline{\mu}_q = \frac{\overline{t}_q^{\frac{1}{p}}}{c}$. Set $\overline{h}(s) := s^{\frac{3}{p^2}}u_{\mu}(s^{\frac{1}{p}}x)$, then we can check that $|\nabla \overline{h}(s)|_p^p = \overline{t}_qs$ and $I(\overline{h}(s))= f_q(\overline{t}_qs)$. Choosing $0 < \tilde{t}_q < \overline{t}_q$ small enough such that $\overline{h}\left(\frac{\tilde{t}_q}{\overline{t}_q}\right) \in A_{K(c)}$, and $\hat{t}_q > \overline{t}_q$ such that $I\left(\overline{h}\left(\frac{\hat{t}_q}{\overline{t}_q}\right)\right) = f_q(\hat{t}_q) < 0$. Let $h(s)=\overline{h}\left((1-s)\frac{\tilde{t}_q}{\overline{t}_q}+s\frac{\hat{t}_q}{\overline{t}_q}\right)$. Then, $h(0) = \overline{h}\left(\frac{\tilde{t}_q}{\overline{t}_q}\right) \in A_{K(c)}$ and $I(h(1)) = I\left(\overline{h}\left(\frac{\hat{t}_q}{\overline{t}_q}\right)\right) = f_q(\hat{t}_q) < 0$. This indicates that $h \in \Gamma(c)$, and
$$
\gamma(c) \leq \underset{s\in [0,1]}{\text{max}} I(h(s)) = I(u_{\overline{\mu}_q}) = f_q(\overline{t}_q).
$$
Combining with (\ref{eq6.9}), we deduce that $\gamma(c) = f_q(\overline{t}_q)$ and $\overline{u}_c = u_{\mu}(x) = \frac{c\left(\frac{\overline{t}_q^{\frac{1}{p}}}{c}\right)^{\frac{3}{p}}}{|Q|_p}Q\left(\frac{\overline{t}_q^{\frac{1}{p}}}{c} x\right) \in S(c)$ is a solution of problem (\ref{eq1.11}).
\par
Next, we prove that $\overline{u}_c$ satisfies (\ref{eq1.1})-(\ref{eq1.2}) for some $\lambda \in \mathbb{R}^-$. In view of $f_q^{\prime}(\overline{t}_q) = 0$ and $|\nabla \overline{u}_c|_p^p = \overline{t}_q$, we have
\begin{equation}\label{eq6.10}
\frac{3(q-p)c^{q-\frac{3(q-p)}{p}}}{p^2|Q|_p^{q-p}}(\overline{t}_q)^{\frac{3(q-p)}{p^2}-1} = a+b\overline{t}_q =a+b|\nabla \overline{u}_c|_p^p.
\end{equation}
Moreover, since $Q(x)$ is a solution of (\ref{eq2.2}), by $\overline{u}_c = \frac{c\left(\frac{\overline{t}_q^{\frac{1}{p}}}{c}\right)^{\frac{3}{p}}}{|Q|_p}Q\left(\frac{\overline{t}_q^{\frac{1}{p}}}{c} x\right)$, it follows that $\overline{u}_c$ satisfies
\begin{equation}
\begin{split}
& -\frac{3(q-p)c^{q-\frac{3(q-p)}{p}}}{p^2|Q|_p^{q-p}}(\overline{t}_q)^{\frac{3(q-p)}{p^2}-1}\varDelta_p \overline{u}_c - |\overline{u}_c|^{q-1}\overline{u}_c \\
& = -\left(1+\frac{(p-3)(q-p)}{p^2}\right)\frac{c^{(q-p)(1-\frac{3}{p})}}{|Q|_p^{q-p}}(\overline{t}_q)^{\frac{3(q-p)}{p^2}}|\overline{u}_c|^{p-1}\overline{u}_c.\\
\end{split}
\nonumber
\end{equation}
This together with (\ref{eq6.10}) indicates that $\overline{u}_c$ is a solution of (\ref{eq1.1})-(\ref{eq1.2}) with 
$$
\lambda = -\left(1+\frac{(p-3)(q-p)}{p^2}\right)\frac{c^{(q-p)(1-\frac{3}{p})}}{|Q|_p^{q-p}}(\overline{t}_q)^{\frac{3(q-p)}{p^2}}.
$$
\end{proof}

\section*{Acknowledgement}
The second author is supported by The 16th Postgraduated Research Innovation Project (KC-24248285).

\end{document}